\documentclass[12pt]{amsart}
\usepackage{preamble}

\usepackage[headheight=15pt, headsep=15pt, footskip=27pt, bottom=2.4cm, left=2.4cm, right=2.4cm]{geometry}

\begin{document}

\title[On the relative Morrison-Kawamata cone conjecture]{On the relative Morrison-Kawamata cone conjecture}

\subjclass[2020]{14E30}

\begin{abstract}
We relate the Morrison-Kawamata cone conjecture for Calabi-Yau fiber spaces to the existence of Shokurov polytopes. For K3 fibrations, the existence of (weak) fundamental domains for movable cones is established. The relationship between the relative cone conjecture and the cone conjecture for its geometric or generic fibers is studied.
\end{abstract}

\author{Zhan Li}
\address[Zhan Li]{Department of Mathematics, Southern University of Science and Technology, 1088 Xueyuan Rd, Shenzhen 518055, China} \email{lizhan@sustech.edu.cn, lizhan.math@gmail.com}

\author{Hang Zhao}
\address[Hang Zhao]{School of Mathematics and Statistics, Yunnan University, Kunming 650091, China} \email{zhaoh@ynu.edu.cn}

\maketitle

\tableofcontents

\section{Introduction}

The purpose of this paper is to study the following (relative) Morrison-Kawamata cone conjecture \cite{Mor93,Mor96,Kaw97,Tot09}.

\begin{conjecture}\label{conj: KM conj}
Let $(X, \De) \to S$ be a klt Calabi-Yau fiber space. Let $\Gamma_B, \Gamma_A$ be the images of the pseudo-automorphism group $\PsAut(X/S,\De)$ and the automorphism group $\Aut(X/S,\De)$ under the natural group homomorphism $\PsAut(X/S,\De) \to {\rm GL}(N^1(X/S)_\Rr)$ respectively.
\begin{enumerate}
\item The cone $\bMov^e(X/S)$ has a (weak) rational polyhedral fundamental domain under the action of $\Gamma_B$.
\item The cone $\bAmp^e(X/S)$ has a  (weak) rational polyhedral fundamental domain under the action of $\Gamma_A$.
\end{enumerate}
\end{conjecture}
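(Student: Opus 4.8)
The plan is to prove part (1) first and recover part (2) along the way, since (2) is the ``single‑chamber'' instance of (1). For (1), I would start from the Mori chamber decomposition of the effective movable cone,
$\bMov^e(X/S)=\bigcup_i \psi_i^{*}\,\bAmp^e(X_i/S)$,
where $\psi_i\colon X\dashrightarrow X_i$ ranges over the marked small $\Qq$-factorial modifications of $X$ over $S$ (equivalently, the minimal models of $(X,\De)$ over $S$), this covering being a consequence of running the relative MMP on movable scaling divisors for the klt Calabi-Yau pair. The group $\Gamma_B$ permutes these chambers, and the $\Gamma_B$-stabilizer of the chamber attached to $X_i$ surjects, up to finite index, onto the image $\Gamma_{A,i}$ of $\Aut(X_i/S,\De_i)$ in ${\rm GL}(N^1(X_i/S)_\Rr)$. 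Consequently a rational polyhedral (weak) fundamental domain for $\Gamma_B$ on $\bMov^e(X/S)$ can be assembled from a finite set of $\Gamma_B$-orbit representatives of chambers together with a rational polyhedral (weak) fundamental domain for each $\Gamma_{A,i}$ on $\bAmp^e(X_i/S)$. This reduces Conjecture~\ref{conj: KM conj} to two logically intertwined points, which I would treat simultaneously: (a) finiteness of the Mori chambers modulo $\Gamma_B$; (b) part (2) of the conjecture for each marked model $X_i$.

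Step (a) is where Shokurov polytopes enter. Fixing an ample divisor $A$ and varying the boundary inside a rational polytope $P\ni\De$ of klt Calabi-Yau boundaries, BCHM-type finiteness of weak log canonical models in the relative setting produces a rational polyhedral subcone $\Pi\subseteq\bMov^e(X/S)$ that meets every Mori chamber; the defining property of a \emph{Shokurov polytope} for $(X/S,\De)$, as isolated in the earlier sections, is precisely that the $\Gamma_B$-orbit of such a $\Pi$ covers $\bMov^e(X/S)$, i.e. that $\Pi$ may be taken as a weak fundamental domain. I expect this to be the principal obstacle: in full generality it is essentially equivalent to the conjecture itself and cannot be established unconditionally, so the general statement I would aim for is the equivalence ``Conjecture~\ref{conj: KM conj} holds for $(X/S,\De)$ if and only if $(X/S,\De)$ admits a Shokurov polytope'', with the unconditional effort directed at verifying the latter in favourable geometries.

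The main such geometry is that of K3 fibrations, where I would descend to the fibers. Passing to the generic fiber $X_\eta$ and then to the geometric generic fiber $X_{\bar\eta}$ — a (possibly klt) K3 surface — one obtains an injection $N^1(X/S)_\Rr\hookrightarrow N^1(X_{\bar\eta})_\Rr$ under which $\bMov^e(X/S)$ and $\bAmp^e(X/S)$ are the subcones cut out by monodromy/Galois invariance, while $\PsAut(X/S,\De)$ and $\Aut(X/S,\De)$ map into the corresponding fiberwise groups. The cone conjecture for K3 surfaces — Sterk's theorem together with its klt extensions, which rest on the signature of the N\'eron--Severi lattice and the arithmeticity of its orthogonal group — supplies rational polyhedral fundamental domains on the fiber. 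One then (i) intersects these with the invariant subspace, using that a finite-index arithmetic group acting on a rational subspace still admits a rational polyhedral fundamental domain, and (ii) spreads out fiberwise pseudo-automorphisms to the total space after a finite base change and descends, to produce the (weak) fundamental domain for $\bMov^e(X/S)$ (and likewise for $\bAmp^e(X/S)$). The technical heart here is not the K3 geometry but these two descent steps — compatibility of the lattice and cone structures under base change, and control of the image of $\PsAut(X/S,\De)$ inside the fiberwise group — together with the bookkeeping of ``weak'' versus genuine fundamental domains; that is where I expect the real difficulty to lie.
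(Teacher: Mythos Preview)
The statement you are attempting to prove is \emph{Conjecture}~\ref{conj: KM conj}: it is not proven in the paper, and indeed remains open in the generality stated. There is therefore no ``paper's own proof'' to compare against. Your proposal is not a proof but a strategic outline, and you acknowledge this yourself when you write that step~(a) ``is essentially equivalent to the conjecture itself and cannot be established unconditionally.'' That is the honest assessment, and it is the fundamental gap: your reduction to the existence of a Shokurov-type polyhedral cone whose $\Gamma_B$-orbit covers $\bMov^e(X/S)$ is not a simplification but a restatement of the problem.

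That said, your roadmap does closely parallel the paper's actual content. The equivalence you aim for --- Conjecture~\ref{conj: KM conj} holds iff a suitable polyhedral cone exists whose orbit covers the relevant cone --- is exactly Theorem~\ref{thm: main 1}, proved via Conjecture~\ref{conj: shokurov polytope}, and your K3-fibration endgame is Theorem~\ref{thm: K3}. Two points of divergence are worth noting. First, the paper does \emph{not} assemble a fundamental domain by gluing nef chambers over orbit representatives as you propose; instead, once the polyhedral-type condition is verified it invokes Looijenga's abstract machinery (Lemma~\ref{le: existence of fun domain}, Proposition~\ref{prop: degenerate cone}) to produce the fundamental domain directly. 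Your chamber-by-chamber assembly would require part~(2) for every marked model $X_i$ as input to part~(1), introducing a circularity the paper avoids. Second, for K3 fibrations the paper works with the \emph{generic} fiber $X_K$ over the function field (Theorem~\ref{thm: generic cone conjecture}), not the geometric generic fiber, precisely because pseudo-automorphisms of $X_K$ spread out to $\PsAut(X/S)$ without any base change or descent; your proposed route through $X_{\bar\eta}$ and Galois descent is the harder path (treated separately in Section~\ref{subsec: geometric cone conj}) and, as the paper's Question~\ref{que: finite index} records, the descent step you flag as ``the real difficulty'' is genuinely unresolved.
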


Relevant notions in Conjecture \ref{conj: KM conj} are explained in Section \ref{sec: preliminaries} and Section \ref{sec: Geometry of convex cones}. In particular, the (weak) rational polyhedral fundamental domain is defined in Definition \ref{def: fundamental domain}. There are different choices of  cones in the cone conjecture, see Remark \ref{rmk: choice of cones} for the reason of the above choice.

At the expense of some ambiguity, for simplicity, we call Conjecture \ref{conj: KM conj} (1) and (2) the (weak) cone conjecture for movable cones and  the (weak) cone conjecture for ample cones respectively. Although our primary interest is in complex varieties, we need to work with non-algebraically closed fields. When $X$ is a smooth Calabi-Yau variety over a field $K$, the analogous cone conjecture still makes sense, and we also call it the cone conjecture. 

The cone conjecture is beyond merely predicting the shape of cones of Calabi-Yau varieties. In fact, for an arbitrary klt pair, if $(X,\De)$ is its minimal model and $(X,\De) \to S$ is the morphism to its canonical model, then the cone conjecture for movable cones predicts finiteness of minimal models (see Proposition \ref{prop: fun domain of Mov} for the precise statement). Moreover, compared with the weaker prediction of having only finitely many $\PsAut(X/S,\De)$- or $\Aut(X/S,\De)$-equivalence classes, the existence of (weak) fundamental domains provides additional information that is crucial for the proof of the cone conjecture (for example, in the proof of Proposition \ref{prop: realize actions}, we rely on the finite generation of $\bar \Gamma_B$).

When $X \to S$ is a birational morphism, \cite{BCHM10} established the finiteness of $\PsAut(X/S)$-equivalence classes. Finiteness of $\PsAut(X/S)$-equivalence classes is also known when $\dim X \leq 3, \dim S >0$ (\cite{Kaw97}) and elliptic fibrations (\cite{FHS21}). When $S$ is a point, Conjecture \ref{conj: KM conj} is known for surfaces (\cite{Tot09}), abelian varieties (\cite{PS12}) and large classes of Calabi-Yau manifolds with Picard number $2$ (\cite{Ogu14, LP13}). The analogous cone conjecture for $\Mov(X/\Cc)_+$ (see Definition \ref{def: polyhedral type}) is also known in the case of a projective hyperk\"ahler manifold $X$ \cite{Mar11}. See also \cite{HPX24} for the cone conjecture for families of irreducible holomorphic symplectic manifolds. Over arbitrary fields of characteristic $\neq 2$, the cone conjecture is known for K3 surfaces \cite{BLL20}.  Analogous cone conjecture of $\Mov(X/K)_+$ is also known for  a hyperk\"ahler variety $X$ over a field $K$ with characteristic $0$ (\cite{Tak21}, cf. Remark \ref{rmk: hyperkahler}). On the other hand, it is known that Conjecture \ref{conj: KM conj} no longer holds true for lc pairs (see \cite{Tot09}). We recommend \cite{LOP18} for a survey of relevant results. Using some of the ideas developed in the present paper, \cite[Theorem~14]{Xu24} proves that the cone conjecture for ample cones follows from the cone conjecture for movable cones. This result was further extended in \cite{GLSW24} to the case of the effective cone.

The new ingredient of the present paper is to study the cone conjecture from the perspective of Shokurov polytopes. We propose the following conjecture which seems to be more tractable.

\begin{conjecture}\label{conj: shokurov polytope}
Let $f: (X, \De) \to S$ be a klt Calabi-Yau fiber space. 
\begin{enumerate}
\item There exists a polyhedral cone $P_M \subset \Eff(X/S)$ such that $$\qquad \bigcup_{g \in \PsAut(X/S, \De)} g\cdot P_M \supset  \Mov(X/S).$$
\item There exists a polyhedral cone  $P_A \subset \Eff(X/S)$ such that $$ \qquad \bigcup_{g \in \Aut(X/S, \De)} g\cdot P_A \supset  \Amp(X/S).$$
\end{enumerate}
\end{conjecture}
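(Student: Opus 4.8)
Conjecture~\ref{conj: shokurov polytope} is stated as a prediction rather than a theorem, so the plan here is to describe the strategy by which one establishes it in the currently accessible cases, and to explain why the Shokurov‑polytope reformulation is the tractable one. The common first step is to produce the candidate cones as finite unions of Mori chambers. Since $K_X+\De\equiv_S 0$, for a divisor $D$ on $X$ the relative $D$-MMP over $S$ agrees with the relative $(K_X+\De+\varepsilon D)$-MMP for $0<\varepsilon\ll 1$; hence, by the relative finiteness of models of \cite{BCHM10} — which is precisely the point at which Shokurov polytopes intervene — inside every rational polyhedral subcone $Q\subset\Eff(X/S)$ the decomposition of $Q\cap\Mov(X/S)$ into Mori chambers (the pullbacks to $N^1(X/S)_\Rr$ of the relative ample cones $\Amp(X'/S)$ of the small $\Qq$-factorial modifications $X'\dashrightarrow X$ over $S$) is finite and rational polyhedral, and each such $X'$ is again a klt Calabi-Yau fiber space over $S$. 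Thus both $P_M$ and $P_A$ are to be assembled as finite unions of rational polyhedral Mori chambers, and what is really being asserted is not local polyhedrality but that finitely many chambers suffice once one is allowed to translate by the relevant group.

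For statement (1) the argument splits into two inputs: (i) the small $\Qq$-factorial modifications of $X$ over $S$ lie in finitely many $\PsAut(X/S,\De)$-orbits; and (ii) for each of finitely many representative modifications $X'$, the cone $\Amp(X'/S)$ is contained in the union of the $\Aut(X'/S,\De)$-translates of a rational polyhedral cone — that is, statement (2) for $X'$. Indeed, given (i) and (ii) one takes $P_M$ to be the union, inside $N^1(X/S)_\Rr$ via the identifications furnished by the modifications, of the polyhedral cones produced by (ii), and checks that translating by $\PsAut(X/S,\De)$ recovers every Mori chamber, using that an automorphism of $X'$ over $S$ induces, via $X\dashrightarrow X'$, a pseudo-automorphism of $X$ over $S$. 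Input (i) is known when $X\to S$ is birational \cite{BCHM10}, when $\dim X\leq 3$ \cite{Kaw97}, and for elliptic fibrations \cite{FHS21}. Input (ii), for a single Calabi-Yau variety over a field, is the Morrison-Kawamata statement itself; besides surfaces \cite{Tot09}, abelian varieties \cite{PS12} and Picard number $\leq 2$ \cite{Ogu14,LP13}, the case relevant here is that of a K3 surface over an arbitrary field of characteristic $\neq 2$, settled in \cite{BLL20}.

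The K3-fibration case of Conjecture~\ref{conj: shokurov polytope} is then obtained by passing to the generic fiber $X_\eta$ over the function field of $S$, or to its geometric fiber: $X_\eta$ is a K3 surface, so \cite{BLL20} furnishes rational polyhedral fundamental domains for $\bMov(X_\eta)$ and $\bAmp(X_\eta)$ under $\Aut(X_\eta)$, and I would spread these out to relative cones in $N^1(X/S)_\Rr$, using that $N^1(X/S)_\Rr$ is controlled by $N^1(X_\eta)_\Rr$ together with the finitely many divisor classes supported over the discriminant of $f$ and over the components of $S$, and that a pseudo-automorphism of $X_\eta$ descends, after a small modification, to one of $X$ over $S$.

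The step I expect to be the main obstacle is making this passage to the generic fiber quantitative: one must show that only finitely many vertical, fiberwise‑trivial Mori chambers intervene — so that the N\'eron--Severi directions over $S$ contribute a genuinely polyhedral piece rather than an infinite fan — and one must reconcile the chamber structure of $X/S$ with that of $X_\eta$ under specialization and descent. I expect this to rest on the finiteness of relative log canonical models (relative abundance for $K_X+\De\equiv_S 0$ being available here), which constrains $\Eff(X/S)$ in the fiber directions, leaving only the monodromy and moduli directions to be absorbed by $\PsAut(X/S,\De)$. In full generality the conjecture stays out of reach precisely because input (i) — finiteness of $\PsAut(X/S,\De)$-orbits of small modifications — is unknown beyond the cases listed above.
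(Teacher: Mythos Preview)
Your outline correctly recognizes that this is a conjecture rather than a theorem, and that the K3-fibration case goes through the generic fiber and \cite{BLL20}. But the route you propose differs from the paper's in two respects, and one of them is problematic.

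Your general framework builds (1) out of (i) finiteness of $\PsAut$-orbits of small $\Qq$-factorial modifications, plus (ii) the ample-cone statement for each representative. The paper does not take this route: in fact (i) is a \emph{consequence} of Conjecture~\ref{conj: shokurov polytope}~(1) (this is Proposition~\ref{prop: fun domain of Mov}~(2)), not an input to it, so assembling (1) from (i)+(ii) is circular unless (i) is supplied independently. The paper's logic runs the other way: it shows that Conjecture~\ref{conj: shokurov polytope} is (nearly) equivalent to the cone conjecture (Theorem~\ref{thm: main 1}), and then derives both (i) and the existence of fundamental domains from it.

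For the K3-fibration case the paper's verification of Conjecture~\ref{conj: shokurov polytope}~(1) (Theorem~\ref{thm: generic cone conjecture}) is considerably more direct than what you sketch. One lifts the fundamental domain $\Pi_K \subset \bMov^e(X_K/K)$ furnished by \cite{BLL20} to a polyhedral cone $\Pi \subset \Eff(X/S)$ and takes $P_M = \Cone(\Pi \cup W)$, where $W \subset \bMov(X/S)$ is the maximal linear subspace. Since $W$ is spanned by vertical divisors (Proposition~\ref{prop: max vector space of Mov}), one has $W \subset \Eff(X/S)$ and $P_M$ is automatically a polyhedral cone in $\Eff(X/S)$; no chamber analysis in the vertical directions is carried out, so your ``main obstacle'' --- bounding the number of vertical, fiberwise-trivial Mori chambers --- simply does not arise. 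Descent of pseudo-automorphisms is likewise immediate: since $X$ is terminal with $K_X$ nef$/S$, one has $\Bir(X/S)=\PsAut(X/S)$, so every $g_K \in \PsAut(X_K/K)$ is already the restriction of some $g \in \PsAut(X/S)$, with no small modification required.
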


It seems that Conjecture \ref{conj: shokurov polytope} is more fundamental, as it incorporates both the finiteness of models or contractions and the existence of fundamental domains. This perspective is further reinforced by the work of \cite{Xu24, GLSW24}.

Using results of \cite{Loo14} and assuming standard conjectures of log minimal model program (LMMP), we show that Conjecture \ref{conj: shokurov polytope} is nearly equivalent to the cone conjecture (when $S$ is a point, they are indeed equivalent).

\begin{theorem}\label{thm: main 1}
Let $f: (X,\De) \to S$ be a klt Calabi-Yau fiber space. 
\begin{enumerate}
\item Assume that good minimal models exist for effective klt pairs in dimension $\dim(X/S)$. If $R^1f_*\Oo_X=0$, then the weak cone conjecture for $\bMov^e(X/S)$ is equivalent to the Conjecture \ref{conj: shokurov polytope} (1).
\item Assume that good minimal models exist for effective klt pairs in dimension $\dim(X/S)$. If $\bMov(X/S)$ is non-degenerate, then the cone conjecture for $\bMov^e(X/S)$ is equivalent to the Conjecture \ref{conj: shokurov polytope} (1).
\item The cone conjecture for $\bAmp^e(X/S)$ is equivalent to the Conjecture \ref{conj: shokurov polytope} (2).
\end{enumerate}
\end{theorem}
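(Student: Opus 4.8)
The plan is to obtain each equivalence by combining Shokurov's polytope decomposition (available thanks to the hypothesis on good minimal models) with Looijenga's theorem from \cite{Loo14}, which converts the existence of a polyhedral cone whose $\Gamma$-translates cover a convex cone $\mathcal{C}$ into a rational polyhedral (weak) fundamental domain for $\Gamma$ acting on the rational closure of $\mathcal{C}$. The two auxiliary hypotheses enter only in the last step: torsionness of $R^1f_*\Oo_X$ is what identifies the rational closure of $\Mov(X/S)$ with $\bMov^e(X/S)$, so that Looijenga's weak fundamental domain is a weak fundamental domain in the sense of Conjecture \ref{conj: KM conj}, while non-degeneracy of $\bMov(X/S)$ is precisely Looijenga's hypothesis for upgrading a weak fundamental domain to an honest one; for the ample cone the latter is automatic because nef cones are pointed.

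The implications ``cone conjecture $\Rightarrow$ Conjecture \ref{conj: shokurov polytope}'' are formal. If $\Pi$ is a (weak) rational polyhedral fundamental domain for the action of $\Gamma_B$ on $\bMov^e(X/S)$, then $\Pi$ is a polyhedral cone contained in $\bMov^e(X/S)\subset\Eff(X/S)$, and $\bigcup_{g\in\PsAut(X/S,\De)}g\cdot\Pi=\bMov^e(X/S)\supset\Mov(X/S)$, so $P_M:=\Pi$ works; the same argument with $\Gamma_A$, $\bAmp^e(X/S)$, $\Amp(X/S)$ gives part (3). No minimal model hypothesis is used here.

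For the converse in (1) and (2), start from a polyhedral cone $P_M\subset\Eff(X/S)$ with $\bigcup_{g\in\PsAut(X/S,\De)}g\cdot P_M\supset\Mov(X/S)$; we may assume $P_M$ is rational, since $\Mov(X/S)$ is a rational cone and $\Gamma_B$ preserves the lattice. Because $K_X+\De\equiv 0$ over $S$, running the $D$-MMP over $S$ for $D$ in the rational polytope $P_M$ is the same as running the $(K_X+\De+D)$-MMP, and the existence of good minimal models in dimension $\dim(X/S)$ makes Shokurov's polytope decomposition applicable: it cuts $P_M$ into finitely many rational polyhedral chambers on whose relative interiors the ample model $X\dashrightarrow Z_j$ over $S$ is constant. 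If such a chamber $C_j$ meets $\Mov(X/S)$, then the corresponding divisors are movable, hence $X\dashrightarrow Z_j$ is an isomorphism in codimension one, $Z_j$ is a marked minimal model over $S$, and $C_j$ lies in the pullback of $\overline{\Amp}(Z_j/S)$, hence in $\overline{\bMov^e(X/S)}$. Taking $\Pi$ to be the convex hull of these finitely many chambers produces a rational polyhedral cone inside $\overline{\bMov^e(X/S)}$; using that $\Mov(X/S)$ is open and $\Gamma_B$-invariant, one checks $\bigcup_{g\in\Gamma_B}g\cdot\Pi\supset\Mov(X/S)$. Now Looijenga's theorem applied to the action of $\Gamma_B$ on $\Mov(X/S)$ yields the desired (weak) fundamental domain, the torsion hypothesis being used for (1) and non-degeneracy for (2). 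Part (3) is the same argument, but one only needs the single Shokurov chamber of $P_A$ on which the ample model is $X$ itself: it lies in $\bAmp^e(X/S)$, is rational polyhedral, has $\Gamma_A$-translates covering $\Amp(X/S)$, and feeds directly into Looijenga's theorem with no extra hypothesis, since $\overline{\Amp}(X/S)$ is pointed.

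The \emph{main obstacle} is the converse direction, and within it the step of shrinking the Shokurov polytope $P_M\subset\Eff(X/S)$ to a rational polyhedral cone inside $\overline{\bMov^e(X/S)}$ that still covers $\Mov(X/S)$ modulo $\Gamma_B$: this is where the existence of good minimal models is indispensable (to identify the movable Shokurov chambers with nef cones of small modifications), and where one must be careful with chambers meeting the boundary of $\Mov(X/S)$ and with matching $\bMov^e(X/S)$ and $\bAmp^e(X/S)$ to the rational closures appearing in \cite{Loo14}.
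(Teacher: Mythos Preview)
Your proposal is correct and follows essentially the same route as the paper: the converse direction is Lemma \ref{le: shrink to fundamental domain} (Shokurov decomposition to pass from $P_M\subset\Eff(X/S)$ to a rational polyhedral $Q_M\subset\Mov(X/S)$ whose $\Gamma_B$-translates still cover) together with Proposition \ref{prop: fun domain of Mov} and Proposition \ref{prop: fun for Amp} (Looijenga's machinery to produce the (weak) fundamental domain). One clarification: the torsion hypothesis on $R^1f_*\Oo_X$ is not only used to identify $\Mov(X/S)_+$ with $\bMov^e(X/S)$, but more crucially to show that the maximal linear subspace $W\subset\bMov(X/S)$ is defined over $\Qq$ (Proposition \ref{prop: max vector space of Mov}(3)), which is what allows one to quotient by $W$ and apply Looijenga's theorem on the non-degenerate quotient (Proposition \ref{prop: degenerate cone}); Looijenga's result as stated in \cite{Loo14} does require non-degeneracy, so this reduction step is where the real work in case (1) lies.
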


Using this circle of ideas, we study a Calabi-Yau fiber space $X \to S$ fibered by K3 surfaces.  This means that for a general closed point $t\in S$, its fiber $X_t$ is a smooth K3 surface. We establish the (weak) cone conjecture of $\bMov^e(X/S)$ for K3 fibrations.

\begin{theorem}\label{thm: K3}
Let $f: X \to S$ be a Calabi-Yau fiber space such that $X$ has terminal singularities. 

If $f$ is fibered by K3 surfaces, then the weak cone conjecture of $\bMov^e(X/S)$ holds true. 

Moreover, if $\bMov(X/S)$ is non-degenerate, then the cone conjecture holds true for $\bMov^e(X/S)$. In particular, if $S$ is $\Qq$-factorial, then the cone conjecture holds true for $\bMov^e(X/S)$. 
\end{theorem}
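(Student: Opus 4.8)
The plan is to deduce the theorem from Theorem~\ref{thm: main 1}. First I would verify its hypotheses. Since $\dim(X/S)=2$, good minimal models exist for effective klt pairs in dimension $\dim(X/S)$ by the minimal model program for surfaces, which is the standing assumption of Theorem~\ref{thm: main 1}. Moreover the general closed fibre $X_t$ is a smooth K3 surface, so $H^1(X_t,\Oo_{X_t})=0$ and, by semicontinuity, $R^1f_*\Oo_X$ vanishes over a dense open subset of $S$; thus $R^1f_*\Oo_X$ is a torsion sheaf. Hence Theorem~\ref{thm: main 1}~(1) identifies the weak cone conjecture for $\bMov^e(X/S)$ with Conjecture~\ref{conj: shokurov polytope}~(1), and Theorem~\ref{thm: main 1}~(2) identifies the full cone conjecture for $\bMov^e(X/S)$ with the same statement whenever $\bMov(X/S)$ is non-degenerate. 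So the problem reduces to constructing a polyhedral cone $P_M\subset\Eff(X/S)$ with $\bigcup_{g\in\PsAut(X/S)}g\cdot P_M\supseteq\Mov(X/S)$, plus (for the last sentence) checking that $\bMov(X/S)$ is non-degenerate when $S$ is $\Qq$-factorial.

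To build $P_M$, I would pass to the generic fibre. Assume $S$ irreducible, set $K=K(S)$. After shrinking a dense open $U\subseteq S$, the morphism $X_U\to U$ is smooth with K3 fibres, so the geometric generic fibre $X_{\bar\eta}$ is a K3 surface; since $\mathrm{Pic}(X_\eta)\hookrightarrow\mathrm{Pic}(X_{\bar\eta})$ by Hilbert~90 and $h^1(\Oo_{X_\eta})=0$ by flat base change, the generic fibre $X_\eta$ is a K3 surface over $K$, of characteristic zero. The cone conjecture for K3 surfaces over fields of characteristic $\neq 2$ (\cite{BLL20}), applied to $X_\eta/K$, yields a rational polyhedral cone $\Pi\subset\Eff(X_\eta/K)$ that is a (weak) fundamental domain for $\Aut(X_\eta/K)$ on $\bMov^e(X_\eta/K)$. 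Let $\rho\colon N^1(X/S)_\Rr\to N^1(X_\eta/K)_\Rr$ be the restriction map. As in the sections on generic fibres, restriction gives an injection $\PsAut(X/S)\hookrightarrow\Aut(X_\eta/K)$ (a pseudo-automorphism restricts to an isomorphism in codimension one of the smooth surface $X_\eta$, hence an automorphism, and a birational self-map over $S$ is determined by its generic fibre), and conversely every $g_\eta\in\Aut(X_\eta/K)$ extends, via the closure of its graph in $X\times_SX$, to a birational self-map of $X$ over $S$, which is an isomorphism in codimension one since $X$, being terminal with $K_X\sim_{\Rr,S}0$, is a minimal model over $S$; thus $\PsAut(X/S)\cong\Aut(X_\eta/K)$, compatibly with $\rho$, and $L:=\rho(N^1(X/S)_\Rr)$ is an $\Aut(X_\eta/K)$-invariant rational subspace. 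Since $\rho(\Mov(X/S))\subseteq\Mov(X_\eta/K)\cap L$, the rational polyhedral cone $\Pi':=\Pi\cap L$ has the property that its $\Aut(X_\eta/K)$-translates cover $\rho(\Mov(X/S))$; lifting the finitely many generators of $\Pi'$ to classes of effective divisors on $X$ (closures of effective divisors on $X_\eta$ representing them) and adjoining generators of the vertical cone $\bMov^e(X/S)\cap\ker\rho$ should assemble the desired rational polyhedral $P_M\subset\Eff(X/S)$, the covering property following from the $\rho$-equivariance.

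For the non-degeneracy of $\bMov(X/S)$ when $S$ is $\Qq$-factorial, the point is that over a $\Qq$-factorial base a numerically $f$-trivial $\Rr$-divisor is pulled back from $S$; consequently no proper vertical class on $X$ is movable over $S$, so $\bMov(X/S)\cap\ker\rho=\{0\}$, and since $\rho$ maps $\bMov(X/S)$ into $\bMov(X_\eta/K)$, which is salient (it lies in the positive cone of the intersection form of $X_\eta$), the lineality space of $\bMov(X/S)$ lies in $\ker\rho$ and is therefore trivial. The step I expect to be the genuine obstacle is the analogous vertical comparison for a general (not necessarily $\Qq$-factorial) base: that $\bMov^e(X/S)\cap\ker\rho$ is rational polyhedral, and that the generic-fibre fundamental domain lifts to an honest polyhedral cone inside $\Eff(X/S)$ whose $\PsAut(X/S)$-orbit still covers $\Mov(X/S)$. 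This is exactly where the two hypotheses fed into Theorem~\ref{thm: main 1} — the torsion of $R^1f_*\Oo_X$ and the existence of good minimal models in relative dimension two — together with the Looijenga-type machinery underlying that theorem, carry the weight; the remaining ingredients are routine manipulations with $\rho$ and the cone conjecture over $K(S)$ from \cite{BLL20}.
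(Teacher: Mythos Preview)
Your overall strategy coincides with the paper's: pass to the generic fibre $X_K$, apply the K3 cone theorem of \cite{BLL20}, lift the resulting fundamental domain together with a vertical contribution to a polyhedral cone inside $\Eff(X/S)$, and feed this into Theorem~\ref{thm: main 1}. (The paper packages the lifting step as Theorem~\ref{thm: generic cone conjecture}.) However, your construction of $P_M$ has a real gap: you should adjoin the \emph{entire} kernel $\ker\rho$, not merely $\bMov^e(X/S)\cap\ker\rho$. For $[M]\in\Mov(X/S)$ and $g\in\PsAut(X/S)$ with $\rho(g^{-1}\!\cdot[M])\in\Pi'$, write $g^{-1}\!\cdot[M]=[D]+v$ with $[D]$ a chosen lift and $v\in\ker\rho$. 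The lift $[D]$ is only known to be effective, not movable over $S$, so $v$ has no reason to lie in $\bMov^e(X/S)$; hence your $P_M$ need not cover $\Mov(X/S)$. Adjoining all of $\ker\rho$ repairs this at no cost: by Proposition~\ref{prop: max vector space of Mov}(1)--(2) this kernel is a rational linear subspace contained in $\Eff(X/S)$, so the enlarged cone is still rational polyhedral and effective, and the covering is then immediate from $\rho$-equivariance. This also dissolves the obstacle you flag about whether $\bMov^e(X/S)\cap\ker\rho$ is rational polyhedral.

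Two further points. Your non-degeneracy argument contains a false intermediate step: it is not true that a relatively numerically trivial divisor is pulled back from $S$, nor do you establish $\bMov(X/S)\cap\ker\rho=\{0\}$. The paper (Proposition~\ref{prop: max vector space of Mov}(4)) proves only that the lineality space of $\bMov(X/S)$ vanishes: any class there lies in $\ker\rho$, hence is represented by a vertical divisor; using $\Qq$-factoriality of $S$ one subtracts a pullback to make it very exceptional, and then the negativity-lemma argument---applied to both the class and its negative, which are \emph{both} in $\bMov$---forces it to be zero. Finally, \cite{BLL20} produces the fundamental domain inside $\Amp(X_K/K)_+$, not a priori inside $\Eff(X_K/K)$; the paper checks $\Amp(X_K/K)_+=\bAmp^e(X_K/K)$ separately via Riemann--Roch on $X_{\bar K}$ followed by Galois descent to $K$, a step you skip.
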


In the subsequent paper \cite{Li23}, we establish the weak cone conjecture for movable cones of terminal Calabi-Yau fibrations in relative dimension $\leq 2$. This is partially extended to klt Calabi-Yau fibrations in relative dimension two by \cite{MS24}.

We discuss the contents of the paper. Section \ref{sec: preliminaries} gives the necessary background materials and fixes notation. Section \ref{sec: Geometry of convex cones} develops the geometry of convex cones following \cite{Loo14}. Section \ref{sec: Generic properties of fibrations and structures of cones} establishes properties of generic and geometric fibers which will be used in Section \ref{sec: Generic and Geometric cone conjecture}. Section \ref{sec: A variant of the cone conjecture} studies the relationship between the cone conjecture and Conjecture \ref{conj: shokurov polytope}. In particular, Theorem \ref{thm: main 1} is proven. Section \ref{sec: Generic and Geometric cone conjecture} studies the cone conjecture by assuming that it holds true for geometric or generic fibers. Theorem \ref{thm: K3} is shown in Section \ref{subsec: generic cone conj}.

\medskip

\noindent {\bf Acknowledgements.} We benefit from discussions with Lie Fu, Yong Hu, Vladimir Lazi\'{c}, Zhiyuan Li, Chen Jiang, Yannan Qiu, Hao Sun, and Jinsong Xu. We thank Xingying Li for pointing out a mistake in Lemma \ref{le: lift to iso in codim 1} and indicating the method to fix it. We are grateful to the anonymous referee for valuable and constructive suggestions. Zhan Li is partially supported by the NSFC No.12471041 and the Guangdong Basic and Applied Basic Research Foundation No.2024A1515012341. Hang Zhao is partially supported by the Scientific Research and Innovation Fund of Yunnan University No.ST20210105. Both authors are partially supported by a grant from SUSTech.

\section{Preliminaries}\label{sec: preliminaries}

Let $f: X \to S$ be a projective morphism between normal quasi-projective varieties over $\Cc$. Then $f$ is called a fibration if $f$ is surjective with connected fibers. We write $X/S$ to mean that $X$ is over $S$. 

By divisors, we mean Weil divisors. For $\mathbb K=\Zz, \Qq, \Rr$ and two $\mathbb K$-divisors $A, B$ on $X$, $A \sim_{\mathbb K} B/S$ means that $A$ and $B$ are $\mathbb K$-linearly equivalent over $S$. If $A, B$ are $\Rr$-Cartier divisors, then $A \equiv B/S$ means that $A$ and $B$ are numerically equivalent over $S$. 

We use $\Supp E$ to denote the support of the divisor $E$. A divisor $E$ on $X$ is called a vertical divisor (over $S$) if $f(\Supp E) \neq S$. A vertical divisor $E$ is called a very exceptional divisor if for any prime divisor $P$ on $S$, over the generic point of $P$, we have $\Supp f^*P \not\subset \Supp E$ (see \cite[Definition 3.1]{Bir12b}). If $f$ is a birational morphism, then the notion of very exceptional divisor coincides with that of exceptional divisor. 

Let $X$ be a normal complex variety and $\De$ be an $\Rr$-divisor on $X$, then $(X, \De)$ is called a log pair. We assume that $K_X+\De$ is $\Rr$-Cartier for a log pair $(X, \De)$. Then $f: (X, \De) \to S$ is called a Calabi-Yau fibration/fiber space if $X \to S$ is a fibration, $X$ is $\Qq$-factorial and $K_X+\De \sim_\Rr 0/S$. When $(X, \De)$ has lc singularities (see Section \ref{subsec: minimal models}),  then $K_X+\De \sim_\Rr 0/S$ is equivalent to the weaker condition  $K_X+\De \equiv 0/S$ by \cite[Corollary 1.4]{HX16}.

\subsection{Movable cones and ample cones}\label{subsec: Movable cones and ample cones}

Let $V$ be a finite-dimensional real vector space with a rational structure, that is, a $\Qq$-vector subspace $V(\Qq)$ of $V$ such that $V=V(\Qq)\otimes_\Qq\Rr$. A set $C\subset V$ is called a cone if for any $x\in C$ and $\lambda\in \Rr_{>0}$, we have $\lambda \cdot x \in C$. We use $\Int(C)$ to denote the relative interior of $C$ and call  $\Int(C)$ the relatively open cone. By convention, the origin is a relatively open cone.  A cone is called a polyhedral cone (resp. rational polyhedral cone) if it is a closed convex cone generated by finite vectors (resp. rational vectors). If $S \subset V$ is a subset, then $\Conv(S)$ denotes the convex hull of $S$, and $\Cone(S)$ denotes the closed convex cone generated by $S$. As we are only concerned about convex cones in this paper, we also call them cones.

Let $\Pic(X/S)$ be the relative Picard group. Let $$N^1(X/S) \coloneqq \Pic(X/S)/{\equiv}$$ be the lattice. Set $\Pic(X/S)_{\mathbb K} \coloneqq \Pic(X/S) \otimes_\Zz \mathbb K$ and $N^1(X/S)_{\mathbb K} \coloneqq N^1(X/S) \otimes_\Zz \mathbb K$ for $\mathbb K = \Qq$ or $\Rr$. If $D$ is an $\Rr$-Cartier divisor, then $[D] \in N^1(X/S)_\Rr$ denotes the corresponding divisor class. To abuse the terminology, we also call $[D]$ an $\Rr$-Cartier divisor.

Recall that an $\Rr$-Cartier divisor $D$ is effective$/S$ if there exists an effective divisor $E \geq 0$ such that $D \sim_\Rr E/S$. A Cartier divisor $D$ is movable$/S$ if the base locus of the relative linear system $|D/S|$ has codimension $>1$. We list relevant cones inside $N^1(X/S)_\Rr$ which appear in the paper:
\begin{enumerate}
\item $\Eff(X/S)$: the cone generated by effective$/S$ Cartier divisors;
\item $\bEff(X/S)$: the closure of $\Eff(X/S)$;
\item $\Mov(X/S)$: the cone generated by movable$/S$ divisors;
\item $\bMov(X/S)$: the closure of $\Mov(X/S)$;
\item $\bMov^e(X/S) \coloneqq \bMov(X/S) \cap \Eff(X/S)$;
\item ${\Mov(X/S)_+}\coloneqq \Conv(\bMov(X/S) \cap N^1(X/U)_\Qq)$ (see Definition \ref{def: polyhedral type});
\item $\Amp(X/S)$: the cone generated by ample$/S$ divisors;
\item $\bAmp(X/S)$: the closure of $\Amp(X/S)$;
\item $\bAmp^e(X/S) \coloneqq \bAmp(X/S) \cap \Eff(X/S)$;
\item  $\Amp(X/S)_+ \coloneqq\Conv(\bAmp(X/S) \cap N^1(X/U)_\Qq)$.
\end{enumerate}

If $K$ is a field of characteristic zero and $X$ is a variety over $K$, then the above cones still make sense for $X$. We use $ \Mov(X/K), \Amp(X/K)$, etc. to denote the corresponding cones.

Recall that for a birational map $g: X \dto Y/S$, if $D$ is an $\Rr$-Cartier divisor on $X$, then the pushforward of $D$, $g_*D$, is defined as follows.  Let $p: W \to X, q: W \to X$ be birational morphisms such that $g \circ p=q$, then $g_*D \coloneqq q_*(p^*D)$. This is independent of the choice of $p$ and $q$. 

 Let $\De$ be a divisor on a $\Qq$-factorial variety $X$. We use $\Bir(X/S, \De)$ to denote the birational automorphism group of $(X, \De)$ over $S$. To be precise, $\Bir(X/S, \De)$ consists of birational maps $ g: X \dto X/S$ such that $g_* \Supp \De= \Supp \De$. A birational map is called a pseudo-automorphism if it is isomorphic in codimension $1$. Let $\PsAut(X/S, \De)$ be the subgroup of  $\Bir(X/S, \De)$ consisting of pseudo-automorphisms. Let $\Aut(X/S, \De)$ be the subgroup of $\Bir(X/S, \De)$ consisting of automorphisms of $X/S$. For a field $K$, if $X$ is a variety over $K$ and $\De$ is a divisor on $X$, then we still use $\Bir(X/K, \De), \PsAut(X/K, \De)$ and $\Aut(X/K, \De)$ to denote the birational automorphism group, the pseudo-automorphism group and the automorphism group of $X/K$ respectively.
 
Let $g\in \Bir(X/S, \De)$ and $D$ be an $\Rr$-Cartier divisor on a $\Qq$-factorial variety $X$. Because the pushforward map $g_*$ preserves numerical equivalence classes, there is a linear map
\[
g_*: N^1(X/S)_\Rr \to N^1(X/S)_\Rr, \quad [D] \mapsto [g_*D].
\] It is straightforward to check that
\[
\begin{split}
\PsAut(X/S, \De) \times N^1(X/S)_\Rr &\to N^1(X/S)_\Rr\\
(g, [D])& \mapsto [g_*D],
\end{split}
\] is a group action. We use $g \cdot D, g \cdot [D]$ to denote $g_*D, [g_*D]$ respectively. Let $\Gamma_B$ and $\Gamma_A$ be the images of $\PsAut(X/S, \De)$ and $\Aut(X/S, \De)$ under the natural group homomorphism $$\iota: \PsAut(X/S, \De) \to {\rm GL}(N^1(X/S)_\Rr).$$ Because $\Gamma_B, \Gamma_A \subset {\rm GL}(N^1(X/S))$, $\Gamma_B$ and $\Gamma_A$ are discrete subgroups. By abusing the notation, we also write $g$ for $\iota(g) \in \Gamma_B$, and denote $\iota(g)([D])$ by $g\cdot [D]$. Then the cones $\Mov(X/S), \bMov(X/S), \bMov^e(X/S)$ and $\Mov(X/S)_+$ are all invariant under the action of $\PsAut(X/S,\De)$. Similarly, $\Amp(X/S), \Amp(X/S), \Amp^e(X/S)$ and $\Amp(X/S)_+$ are all invariant under the action of $\Aut(X/S,\De)$.

\begin{remark} 
If $g\in \Bir(X/S)$ is not isomorphic in codimension $1$, then for $[D] \in \Mov(X/S)$, $[g_*D]$ may not be in $\Mov(X/S)$. Moreover, $(g , [D]) \mapsto [g_*D]$ is not a group action of $\Bir(X/S, \De)$ on $N^1(X/S)_\Rr$. For one thing, if $D$ is a divisor contracted by $g$, then $g^{-1}_*(g_*[D])= 0 \neq (g^{-1}\circ g)_*[D]$.
\end{remark}

The following example gives a birational map that is not a pseudo-automorphism. 

\begin{example}
Let $f(x,y,z)$ be a general homogeneous cubic polynomial. Let $D \coloneqq \{f(x,y,z)=0\}\subset \Pp^2$ and $B\coloneqq \{f(-x,y,z)=0\}\subset \Pp^2$. Then $(\Pp^2, \frac 1 2 D+ \frac 1 2 B)$ is a klt Calabi-Yau pair. Let $$p_1=[a:b:c], p_2=[-a:b:c] \in D \cap B$$ be two distinct points. Let $\pi_i: X_i \to \Pp^2, i=1,2$ be the blowing up of $p_i$ such that $E_i, i=1,2$ are corresponding exceptional divisors. If $D_i, B_i$ are the strict transforms of $D, B$ on $X_i$, then
\[
K_{X_i}+\frac 1 2 D_i+ \frac 1 2 B_i=\pi_i^*(K_{\Pp^2}+\frac 1 2 D+ \frac 1 2 B).
\] Therefore, each $({X_i}, \frac 1 2 D_i+ \frac 1 2 B_i)$ is a klt Calabi-Yau pair. Moreover, $({X_1}, \frac 1 2 D_1+ \frac 1 2 B_1)$ is isomorphic to $({X_2}, \frac 1 2 D_2+ \frac 1 2 B_2)$ through $\pi_2^{-1}\circ\tau\circ\pi_1$, where $\tau: \Pp^2 \to \Pp^2$ is given by $[x:y:z] \mapsto [-x:y:z]$. However, the birational map
\[
\pi_2^{-1}\circ\pi_1: (X_1, \frac 1 2 D_1+ \frac 1 2 B_1) \dto (X_2, \frac 1 2 D_2+ \frac 1 2 B_2)
\] is not isomorphic in codimension $1$. In fact, this map contracts $E_1$ and extracts $E_2$.
\end{example}

\subsection{Minimal models of varieties}\label{subsec: minimal models}

Let $(X,\De)$ be a log pair. For a divisor $D$ over $X$, if $f: Y \to X$ is a birational morphism from a smooth variety $Y$ such that $D$ is a prime divisor on $Y$, then the log discrepancy of $D$ with respect to $(X, \De)$ is defined to be $$a(D; X, \De) \coloneqq\mult_{D}(K_Y-f^*(K_X+\De))+1.$$ This definition is independent of the choice of $Y$. A log pair $(X,\De)$ (or its singularity) is called sub-klt (resp. sub-lc) if the log discrepancy of any divisor over $X$ is $>0$ (resp. $\geq 0$). If $\De \geq 0$, then a sub-klt (resp. sub-lc) pair $(X,\De)$ is called klt (resp. lc). If $\De=0$ and the log discrepancy of any exceptional divisor over $X$ is $>1$, then $X$ is said to have terminal singularities. A fibration/fiber space $(X, \De) \to S$ is called a klt (resp. terminal) fibration/fiber space if $(X, \De)$ is klt (resp. terminal). In the sequel, we will use a well-known fact that if $X$ has terminal singularities with $K_X$ nef$/S$, then $\Bir(X/S) = \PsAut(X/S)$ (see, for example, \cite[Lemma 2.6]{Li23}).

Let $X \to S$ be a projective morphism of normal quasi-projective varieties. Suppose that $(X, \De)$ is klt. Let $\phi: X \dto Y/S$ be a birational contraction (i.e. $\phi$ does not extract divisors) of normal quasi-projective varieties over $S$, where $Y$ is projective over $S$. We write $\De_Y \coloneqq \phi_*\De$ for the strict transform of $\De$. Then $(Y/S, \De_Y)$ is a weak log canonical model of $(X/S, \De)$ if $K_Y+\De_Y$ is nef$/S$ and $a(D; Y, \De_Y) \geq a(D;X, \De)$ for any divisor $D$ over $X$.

\begin{lemma}\label{le: lift to iso in codim 1}
Let $(X/S, \De)$ be a klt pair with $[K_X+\De] \in \bMov(X/S)$. Suppose that $g: (X/S, \De) \dto (Y/S, \De_Y)$ is a weak log canonical model of $(X/S, \De)$. Then $(X/S, \De)$ admits a weak log canonical model $(Y'/S, \De_{Y'})$ such that 
\begin{enumerate}
\item $Y'$ is $\Qq$-factorial,
\item $X, Y'$ are isomorphic in codimension $1$, and
\item there exists a morphism $\nu: Y' \to Y/S$ such that $K_{Y'}+\De_{Y'} = \nu^*(K_Y+\De_Y)$.
\end{enumerate}
\end{lemma}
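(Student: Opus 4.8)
The plan is to produce $(Y',\De_{Y'})$ not by modifying the given model $(Y,\De_Y)$ but by running a $(K_X+\De)$-MMP over $S$: the hypothesis $[K_X+\De]\in\bMov(X/S)$ will force every step of such an MMP to be a flip, so its output is automatically $\Qq$-factorial and isomorphic to $X$ in codimension $1$, while the assumed existence of the weak log canonical model $(Y,\De_Y)$ enters only to guarantee termination.

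First I reduce to the case that $X$ is $\Qq$-factorial: a small $\Qq$-factorialization $\ti X\to X$ exists by \cite{BCHM10}, is crepant for $(X,\De)$, preserves $[K_{\ti X}+\ti\De]\in\bMov(\ti X/S)$ (the cone $\bMov(\cdot/S)$ being unchanged by small birational maps, e.g.\ via Nakayama's $\sigma$-decomposition), and keeps $(Y,\De_Y)$ a weak log canonical model; and a weak log canonical model of $(\ti X,\ti\De)$ isomorphic to $\ti X$ in codimension $1$ is also one of $(X,\De)$. Assuming now that $X$ is $\Qq$-factorial: since $(X/S,\De)$ is klt and admits a weak log canonical model, a $(K_X+\De)$-MMP over $S$ with scaling of an ample divisor exists (klt flips exist by \cite{BCHM10}) and terminates; write it $X=X_0\dto X_1\dto\cdots\dto X_n=:Y'$, with $\De_i$ the strict transform of $\De$ on $X_i$ and $\De_{Y'}:=\De_n$. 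Then $K_{Y'}+\De_{Y'}$ is nef$/S$ and $Y'$ is $\Qq$-factorial, so $(Y'/S,\De_{Y'})$ is a weak log canonical model of $(X/S,\De)$.

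It remains to show that every step $X_i\dto X_{i+1}$ is a flip, as then $X\dto Y'$ is an isomorphism in codimension $1$, which is what we want. I argue by induction on $i$: if $X_0\dto X_i$ is an isomorphism in codimension $1$, then strict transform identifies $N^1(X_0/S)_\Rr$ with $N^1(X_i/S)_\Rr$ and carries $\bMov(X_0/S)$ and $\bEff(X_0/S)$ onto $\bMov(X_i/S)$ and $\bEff(X_i/S)$, so $[K_{X_i}+\De_i]\in\bMov(X_i/S)\cap\bEff(X_i/S)$. Let $c\colon X_i\to Z_i$ be the extremal contraction of the next step and $C$ a general curve contracted by $c$, so $(K_{X_i}+\De_i)\cdot C<0$. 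If $c$ were of fibre type, $C$ would sweep out $X_i$, hence $D\cdot C\ge 0$ for every pseudoeffective$/S$ class $D$ (a general such $C$ avoids the support of a general effective representative), contradicting $[K_{X_i}+\De_i]\in\bEff(X_i/S)$. If $c$ were divisorial, contracting a prime divisor $G$, then $C$ would sweep out $G$, hence $M\cdot C\ge 0$ for every movable divisor $M$ (a general such $C$ is not contained in the codimension-$\ge 2$ base locus of $|M|$, hence not contained in some member of $|M|$), so $\bMov(X_i/S)\subset\{D:D\cdot C\ge 0\}$, contradicting $[K_{X_i}+\De_i]\in\bMov(X_i/S)$. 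So $c$ is a flipping contraction and the step is a flip, completing the induction.

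The crux is the termination statement in the second paragraph: granting termination of flips it is immediate, and otherwise one uses the standard fact (a consequence of \cite{BCHM10}) that a klt pair admitting a weak log canonical model has a terminating $(K_X+\De)$-MMP with scaling. The rest is soft, resting on two facts: $\bMov(\cdot/S)$ and $\bEff(\cdot/S)$ depend only on the codimension-one structure and are thus preserved by isomorphisms in codimension $1$, and a movable divisor meets nonnegatively every curve that sweeps out a divisor.
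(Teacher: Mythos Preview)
Your proof is correct, but it takes a genuinely different route from the paper's.

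The paper does not run an MMP from $X$ at all. Instead it works directly with the given model $Y$: taking a common resolution $p\colon W\to X$, $q\colon W\to Y$, one writes $p^*(K_X+\De)=q^*(K_Y+\De_Y)+E$ with $E\ge 0$ $q$-exceptional, and the hypothesis $[K_X+\De]\in\bMov(X/S)$ forces $E=0$ (otherwise a covering family of curves on a component of $E$ gives $E\cdot C_t<0$ while $p^*(K_X+\De)\cdot C_t\ge 0$). Hence every prime divisor $F$ contracted by $X\dto Y$ satisfies $a(F;Y,\De_Y)=a(F;X,\De)\le 1$, and one simply extracts all such $F$ over $Y$ via \cite[Corollary 1.4.3]{BCHM10} to obtain a $\Qq$-factorial $Y'\to Y$ with $K_{Y'}+\De_{Y'}=\nu^*(K_Y+\De_Y)$; then $X\dto Y'$ is small by construction.

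Your argument replaces this extraction step by an MMP-with-scaling from $X$ and the observation that movability rules out Mori fibre and divisorial steps. This is perfectly valid, and the core use of movability (nonnegativity against curves sweeping out a divisor) is the same in spirit as the paper's $E=0$ step. The trade-off is in what you invoke: the paper needs only the extraction result from \cite{BCHM10}, whereas your approach needs termination of the MMP with scaling given the existence of a weak log canonical model. That termination statement is true, but it is not a direct consequence of \cite{BCHM10} in the generality you need; it is Birkar's theorem (e.g.\ \cite[Theorem 1.9]{Bir12b}), so your attribution should be adjusted. In the paper's applications the weak log canonical model always comes from \cite{HX13} as a good minimal model anyway, so in practice your route is just as effective there.
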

\begin{proof}
If $E$ is a prime divisor on $X$ which is exceptional over $Y$ and 
\[
a(E; X, \De) = a(E; Y, \De_Y),
\] then by $a(E; X, \De) \leq 1$, we have  $a(E; Y, \De_Y) \leq 1$. By \cite[Corollary 1.4.3]{BCHM10}, there exist a $\Qq$-factorial variety $Y'$ and a birational morphism $\nu: Y' \to Y$ which extracts all such divisors. We have $K_{Y'}+\De_{Y'} = \nu^*(K_Y+\De_Y)$ with $\De_{Y'} \geq 0$. Moreover, if $E$ is an exceptional divisor for $\nu^{-1}\circ g$, then 
\begin{equation}\label{eq: <}
a(E; X, \De)< a(E; Y', \De_{Y'}).
\end{equation}

It suffices to show that $X \dto Y'$ is isomorphic in codimension $1$. Let $p: W \to X$ and $q: W \to Y'$ be birational morphisms such that $q \circ p^{-1}=\nu^{-1}\circ g$. Then we have
\[
p^*(K_X+\De) = q^*(K_{Y'}+\De_{Y'})+E+F,
\] where $F \geq 0$ is a $p$-exceptional divisor and $E \geq 0$ is a $q$-exceptional divisor but not $p$-exceptional. By \eqref{eq: <}, $\Supp p(E) = \Exc(\nu^{-1}\circ g)$. Therefore, it suffices to show $E=0$.

Suppose that $E >0$ and $\Gamma$ is an irreducible component of $E$. As $K_{Y'}+\De_{Y'}$ is nef$/S$, 
\[
\sigma_{\Gamma}(q^*(K_{Y'}+\De_{Y'})+E+F; W/S) = \mult_{\Gamma} E>0,
\] where $\sigma_{\Gamma}(q^*(K_{Y'}+\De_{Y'})+E+F; W/S)$ is the coefficient of $\Gamma$ in the relative $\sigma$-decomposition of $q^*(K_{Y'}+\De_{Y'})+E+F$ (see \cite[Chapter III]{Nak04}). On the other hand, as $[K_X+\De]\in \bMov(X/S)$, if $\sigma_{\Gamma}(p^*(K_X+\De))>0$, then $\Gamma$ must be $p$-exceptional. This contradicts the choice of $\Gamma$.
\end{proof}

A weak log canonical model $(Y/S, \De_Y)$ of $(X/S, \De)$ is called a good minimal model of $(X/S, \De)$ if $K_Y+\De_Y$ is semi-ample$/S$. It is well-known that the existence of a good minimal model of $(X/S, \De)$ implies that any weak log canonical model of $(X/S, \De)$ is a good minimal model (for example, see the proof in \cite[Remark 2.7]{Bir12b}).

By saying that ``good minimal models of effective klt pairs exist in dimension $n$", we mean that for any projective variety $X$ of dimension $n$ over $\Cc$, if $(X, \De)$ is klt and the Kodaira dimension $\ka(K_X+\De) \geq 0$, then $(X, \De)$ has a good minimal model.

\begin{theorem}[{\cite[Theorem 2.12]{HX13}}]\label{thm: HX13}
Let $f: X \to S$ be a surjective projective morphism and $(X, \De)$ a klt pair such that for a very general closed point $s\in S$, the fiber $(X_s, \De_s=\De|_{X_s})$ has a good minimal model. Then $(X, \De)$ has a good minimal model over $S$.
\end{theorem}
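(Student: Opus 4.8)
The plan is to reduce the hypothesis on very general closed fibres to a statement about the geometric generic fibre, to produce a minimal model over $S$ by a relative minimal model program whose termination is forced by that fibre, and finally to upgrade the resulting minimal model to a good one using semi-ampleness on a very general fibre. Throughout one may replace $(X,\De)$ by a $\Qq$-factorialization, so that $X$ is $\Qq$-factorial klt; this alters neither the conclusion nor the hypothesis on fibres.

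For the first reduction: since $(X_s,\De_s)$ admits a good minimal model for very general $s$, we have $\ka(K_{X_s}+\De_s)\ge 0$, so $K_X+\De$ is pseudo-effective over $S$; moreover the Kodaira dimension $\ka(K_{X_s}+\De_s)$ and the numerical dimension $\nu(K_{X_s}+\De_s)$ are independent of $s$ for very general $s$ and agree with the corresponding invariants of the geometric generic fibre $X_{\bar\eta}$, since both are constant on very general fibres of a projective family and equal those of $X_{\bar\eta}$. By the characterisation of good minimal models through abundance --- a klt pair with $\ka\ge 0$ has a good minimal model if and only if $\ka=\nu$ --- the equality $\ka(K_{X_s}+\De_s)=\nu(K_{X_s}+\De_s)$ for very general $s$ yields $\ka(K_{X_{\bar\eta}}+\De_{\bar\eta})=\nu(K_{X_{\bar\eta}}+\De_{\bar\eta})$, so $(X_{\bar\eta},\De_{\bar\eta})$, and hence the generic fibre $(X_\eta,\De_\eta)$ over $K(S)$, admits a good minimal model.

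Next, choose a general ample $\Qq$-divisor $A$ with $(X,\De+A)$ klt and $K_X+\De+A$ ample over $S$, and run the $(K_X+\De)$-MMP over $S$ with scaling of $A$. Restricted to the generic fibre this induces a $(K_{X_\eta}+\De_\eta)$-MMP with scaling of $A_\eta$, which terminates because $(X_\eta,\De_\eta)$ has a good minimal model (an MMP with scaling of an ample divisor on a pair that admits a good minimal model terminates). The remaining steps of the MMP over $S$ --- those inducing isomorphisms on $X_\eta$ --- are vertical, i.e. supported over proper closed subsets of $S$, and are eliminated by Noetherian induction on $\dim S$ together with special termination; this is the step I expect to be the main obstacle, since, unlike on the generic fibre, ruling out an infinite sequence of vertical flips and divisorial contractions genuinely requires the full strength of special termination plus the induction on $\dim S$. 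The outcome is a minimal model $(X,\De)\dto (X',\De')$ over $S$ with $K_{X'}+\De'$ nef over $S$.

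It remains to check that $K_{X'}+\De'$ is semi-ample over $S$. For very general $s$, $(X'_s,\De'_s)$ is a minimal model of $(X_s,\De_s)$; as the latter has a good minimal model, all of its minimal models are good (as recalled above), so $K_{X'_s}+\De'_s$ is semi-ample. Hence the numerical and Kodaira dimensions of $K_{X'}+\De'$ over $S$ agree (both may be computed on a very general fibre), and relative abundance in this case --- via the relative Iitaka fibration together with the canonical bundle formula, reducing to the log general type situation handled by \cite{BCHM10} --- shows that $K_{X'}+\De'$ is semi-ample over $S$. Therefore $(X',\De')$ is a good minimal model of $(X,\De)$ over $S$. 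A secondary delicate point is the comparison, used repeatedly above, of Kodaira and numerical dimensions between very general closed fibres and the geometric generic fibre.
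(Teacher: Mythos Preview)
The paper does not supply its own proof of this statement: it is quoted directly from \cite[Theorem 2.12]{HX13}, with only the added remark that assumption (2) in Hacon--Xu's original formulation is automatic for klt pairs by \cite[Theorem 1.2]{BCHM10}. There is therefore nothing in the paper to compare your proposal against.

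As an independent sketch of the Hacon--Xu argument, your outline follows the correct overall strategy---reduce to the geometric generic fibre via the equivalence between existence of a good minimal model and $\kappa=\nu$, run an MMP with scaling over $S$, then verify semi-ampleness---but the step you flag as the main obstacle is not handled the way you suggest. Noetherian induction on $\dim S$ together with special termination does not close the gap: induction on the base would require the very-general-fibre hypothesis to persist after restricting $f$ over a proper closed subset of $S$, which is not automatic, and special termination gives no information for a klt pair with no lc centres. The actual mechanism in \cite{HX13} (building on \cite{Bir12b}) is that once the MMP has terminated on the generic fibre, the relative Nakayama--Zariski negative part $N_\sigma(K_X+\De/S)$ is vertical and in fact very exceptional over $S$; the MMP with scaling then contracts exactly this divisor in finitely many steps, after which $K_X+\De$ is nef over $S$. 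Your final semi-ampleness step is also looser than the actual proof, which uses extension of pluricanonical sections from fibres and a reduction via the canonical bundle formula to the big case handled by \cite{BCHM10}, rather than merely matching $\kappa$ and $\nu$ fibrewise.
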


\cite[Theorem 2.12]{HX13} states for a $\Qq$-divisor $\De$. However, it still holds for an $\Rr$-divisor $\De$: in the proof of \cite[Theorem 2.12]{HX13}, one only needs to replace $\proj_S \oplus_{m\in\Zz_{>0}}R^0f_*\Oo_X(m(K_X+\De))$ by the canonical model of $(X/S, \De)$ whose existence is known for effective klt pairs by \cite[Corollary 1.2]{Li22}. Indeed, because $\kappa(K_{X_s}+\De_s) \geq 0$ for a very general $s\in S$ by assumption, $K_X+\De \sim_\Rr E/S$ with $E \geq 0$ by \cite[Theorem 3.18]{Li22}.

\subsection{Shokurov polytopes}

Let $V$ be a finite-dimensional $\Rr$-vector space with a rational structure. A polytope (resp. rational polytope) $P\subset V$ is the convex hull of finite points (resp. rational points) in $V$. In particular, a polytope is always closed and bounded. We denote by $\Int(P)$ the relative interior of $P$, and refer to $\Int(P)$ as the relatively open polytope. By convention, a single point is a relatively open polytope.  Therefore, $\Rr_{>0}\cdot P$ is a relatively open polyhedral cone iff $P$ is a relatively open polytope.

\begin{theorem}[{\cite[Theorem 3.4]{SC11}}]\label{thm: Shokurov}
Let $X$ be a $\Qq$-factorial variety and $f: X \to S$ be a  fibration. Assume that good minimal models exist for effective klt pairs in dimension $\dim(X/S)$. Let $D_i, i=1,\ldots, k$ be effective $\Qq$-divisors on $X$. Suppose that $P \subset \oplus_{i=1}^k [0,1) D_i$ is a rational polytope such that for any $\De \in P$, $(X, \De)$ is klt and $\ka(K_F+ \De|_F)\geq 0$, where $F$ is a general fiber of $f$. 

Then $P$ can be decomposed into a disjoint union of finitely many relatively open rational polytopes $P = \sqcup_{i=1}^m Q^\circ_i$ such that for any $B, D \in Q^\circ_i$, if $(Y/S, B_Y)$ is a weak log canonical model of $(X/S, B)$, then $(Y/S, D_Y)$ is also a weak log canonical model of $(X/S, D)$.
\end{theorem}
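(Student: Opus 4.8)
The plan is to deduce the statement from the existence of good minimal models over $S$, which the hypotheses on $P$ are arranged to supply via Theorem \ref{thm: HX13}, together with the relative form of the geography of log models of \cite{BCHM10}. I would argue in three steps: (i) every $(X/S,\De)$ with $\De\in P$ has a good minimal model over $S$; (ii) there is a finite family of birational contractions over $S$ containing every weak log canonical model of every such pair, and $P$ decomposes into finitely many relatively open rational polytopes (``model chambers'') on which the ample model is constant; (iii) on each chamber the collection of all weak log canonical models is constant, which is exactly the assertion.

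For step (i), fix $\De\in P$. Since $P$ is a rational polytope, one first reduces to the case $\De$ rational by a standard polytope argument. For rational $\De$, the restriction $(F,\De|_F)$ to a general fibre $F$ of $f$ is klt with $\ka(K_F+\De|_F)\ge 0$, so it has a good minimal model by the hypothesis that good minimal models exist for effective klt pairs in dimension $\dim(X/S)$; hence $(X_s,\De_s)$ has a good minimal model for very general $s\in S$, and Theorem \ref{thm: HX13} then produces a good minimal model of $(X/S,\De)$ over $S$. By the remark following the definition of a good minimal model, every weak log canonical model of $(X/S,\De)$ is now a good minimal model; in particular $(X/S,\De)$ has an ample model $g_\De\colon X\dto Z_\De$ over $S$, unique up to isomorphism.

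For steps (ii) and (iii), I would run $(K_X+\De)$-MMP with scaling over $S$ for the boundaries in $P$, exactly as in \cite[\S1]{BCHM10} but relative to $S$. This yields finitely many birational contractions $\phi_1,\dots,\phi_N\colon X\dto Y_j$ over $S$ such that for each $\De\in P$ some $(Y_j,(\phi_j)_*\De)$ is a weak log canonical model of $(X/S,\De)$. Composing with the morphisms $Y_j\to Z_\De$ yields only finitely many possible ample models, and the locus of $P$ realising a given one is a relatively open rational polytope: it is cut out by the affine-linear conditions that $K_X+\De$ be numerically trivial on the finitely many curve classes contracted to that ample model, together with the open condition that the pushforward be ample. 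These model chambers and their common faces partition $P$ into finitely many relatively open rational polytopes $Q_1^\circ,\dots,Q_m^\circ$. On a fixed $Q_i^\circ$ the ample model $Z$ is fixed; since $(X/S,\De)$ has a good minimal model, every weak log canonical model $(Y,B_Y)$ of $(X/S,\De)$ has ample model $Z$ and factors as $X\dto Y\to Z$, there are only finitely many such $Y$, and which ones occur is governed by affine-linear conditions in $\De$ that are constant on $Q_i^\circ$. Hence for $B,D\in Q_i^\circ$, $(Y/S,B_Y)$ is a weak log canonical model of $(X/S,B)$ if and only if $(Y/S,D_Y)$ is a weak log canonical model of $(X/S,D)$.

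The substantial point is step (ii): finiteness of models needs the relative MMP with scaling and the full geography-of-models argument, and step (iii) needs care with non-$\Qq$-factorial weak log canonical models, which the MMP does not produce directly --- the way around this is that every weak log canonical model shares its ample model with the $\Qq$-factorial one and sits over it, which bounds the possibilities. The $\Rr$-to-$\Qq$ reduction in step (i) and the polyhedrality bookkeeping in steps (ii)--(iii) are then comparatively routine.
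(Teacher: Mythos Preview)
Your step (ii) has a genuine gap. You write that running MMP with scaling ``exactly as in \cite[\S1]{BCHM10}'' produces finitely many birational contractions $X\dashrightarrow Y_j$ over $S$. But the finiteness results in \cite{BCHM10} (their Theorem~E, Corollary~1.1.5, and in particular Lemma~7.1) all require either that $K_X+\De$ be big over the base, or that the boundary contain a fixed ample component. Your polytope $P$ carries neither hypothesis: the pairs $(X/S,\De)$ are only assumed to have nonnegative Kodaira dimension on the general fibre. Without bigness there is no a priori bound on the number of models the scaled MMP visits as $\De$ ranges over $P$, so the chamber decomposition you want does not follow from \cite{BCHM10} directly.

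The paper's proof is organized precisely to circumvent this. It proceeds by induction on $\dim P$ and works locally around a rational point $\De_0\in P$. Passing to a good minimal model $(X',\De_0')$ and then to its ample model $\pi\colon X'\to Z'$ over $S$, one arranges that over $Z'$ the (crepant) log divisor is numerically trivial at the center point. This is the base case (the paper's Step~1): when some interior point has $K_X+\De_0\equiv 0/S$, the decomposition is obtained by applying the inductive hypothesis to each face of $P$ and coning to $\De_0$, not by invoking BCHM finiteness. One then shrinks $P$ so that the induced models over $Z'$ are nef over $S$ (Step~3, using semiampleness), and finally (Step~4) a repeated negativity-lemma argument shows that any weak log canonical model $(Y/S,B_Y)$ --- including non-$\Qq$-factorial ones not produced by the MMP --- is forced to agree with the model coming from the chamber over $Z'$. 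Your step~(iii) sketch (``affine-linear conditions constant on $Q_i^\circ$'') is exactly where this negativity argument does the real work; it is not bookkeeping.

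In short: your overall architecture (good minimal models via Theorem~\ref{thm: HX13}, then geography of models) is the right intuition, but the finiteness input you need is not available from \cite{BCHM10} in this generality, and the paper's inductive reduction to the numerically trivial case over the canonical model is the missing idea.
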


For the convenience of the reader, we give the proof of Theorem \ref{thm: Shokurov}. The argument essentially follows from \cite[Lemma 7.1]{BCHM10}. However, we need to take care of the weaker assumption on the existence of weak log canonical models, as opposed to log terminal models. 

\begin{proof}[Proof of Theorem \ref{thm: Shokurov}]
We proceed by induction on the dimension of $P$. Note that by Theorem \ref{thm: HX13}, $(X/S, \De)$ has a good minimal model$/S$ for any $\De \in P$.

\noindent Step 1. If there exists a $\De_0 \in P$ such that $K_X+\De_0 \equiv 0/S$, then we show the claim. In fact, let $P'$ be the union of facets of $P$. By the induction hypothesis, $P'= \sqcup_j \ti Q^\circ_j$ is a finite union such that each $\ti Q^\circ_j$ is a relatively open rational polytope, and for $B, D \in \ti Q^\circ_j$, if $(Y/S, B_Y)$ is a weak log canonical model of $(X/S, B)$, then $(Y/S, D_Y)$ is also a weak log canonical model of $(X/S, D)$. Note that for any facet $F$ of $P$, each $\ti Q_j^\circ$ either lies entirely in $F$ or is disjoint from $F$. For  $t, t'\in(0,1]$, we have
\[
\begin{split}
&K_X+tB+(1-t)\De_0=t(K_X+B)+(1-t)(K_X+\De_0) \equiv t(K_X+B)/S,\\
&K_X+t'D+(1-t')\De_0=t'(K_X+D)+(1-t')(K_X+\De_0) \equiv t'(K_X+D)/S.
\end{split}
\] Hence, $(Y/S, tB_Y+(1-t)\De_{0,Y})$ is a weak log canonical model of $(X/S, tB+(1-t)\De_{0})$ iff $(Y/S, B_Y)$ is a weak log canonical model of $(X/S, B)$ iff $(Y/S, D_Y)$ is a weak log canonical model of $(X/S, D)$ iff $(Y/S, t'D_Y+(1-t')\De_{0,Y})$ is a weak log canonical model of $(X/S, t'D+(1-t')\De_{0})$. Therefore, if $\De_0 \in \Int(P)$, then the decomposition
\[
P=\left(\bigsqcup_j \ti Q_j^\circ\right) \bigsqcup \left(\bigsqcup_j \Int(\Conv(\ti Q_j^\circ, \De_0))\right) \bigsqcup \{\De_0\}
\] satisfies the claim. If $\De_0$ lies on the boundary of $P$, define
\[
P'' \coloneqq \bigcup_{\substack{\De_0\in F \\ F\subset P \text{~is a facet}}} F
\]
to be the union of facets of $P$ that contain $\De_0$.  Then the decomposition
\[
P=\left(\bigsqcup_{j} \ti Q_j^\circ\right) \bigsqcup \left(\bigsqcup_{\ti Q_j^\circ \not\subset P''} \Int(\Conv(\ti Q_j^\circ, \De_0))\right)
\] satisfies the claim. 

\[
\xymatrix{
&&&V\ar[ld]^r\ar[rd]_s\ar@/_/[llldd]_{\ti p}\ar@/^/[rrd]^{\ti q}&&\\
&&W\ar@{-->}[rr]^\theta\ar[ld]_p\ar[rd]^q&&W'\ar@{-->}[r]\ar[ld]_{q'}&W_i''\ar[lldd]\ar[d]^\tau\\
Y&X\ar@{-->}[l]\ar@{-->}[rr]&&X'\ar[d]_\pi&&T_i\ar[lld]^\mu\\
&&&Z'/S&&}
\]

\noindent Step 2. Next, we show the general case. By the compactness of $P$, it suffices to show the result locally around any point $\De_0\in P$. During the argument, by saying that shrinking $P$, we mean that replacing $P$ by a sufficiently small rational polytope $P'\subset P$ such that $P' \supset P \cap {\mathbb B}(\De_0,\ep)$, where  ${\mathbb B}(\De_0,\ep)\subset \oplus_{i=1}^k \Rr \cdot D_i$ is the ball centered at $\De_0$ with radius $\ep \in \Rr_{>0}$.

Let $(X'/S, \De_0')$ be a weak log canonical model of $(X/S, \De_0)$. By Theorem \ref{thm: HX13}, there exist a contraction $\pi: X' \to Z'/S$ and an ample$/S$ $\Rr$-Cartier divisor $A$ on $Z'$ such that $K_{X'}+\De_0' \sim_\Rr \pi^*A/S$. 

Let $p: W \to X, q: W \to X'$ be birational morphisms such that $q\circ p^{-1}$ is the natural map $X \dto X'$. Moreover, we assume that $p$ is a log resolution of $(X, \sum_{i=1}^k D_i)$. Let $\ti D_i, i=1,\ldots, k$ be the strict transforms of $D_i, i=1,\ldots, k$ on $W$, and $E_j, j=1, \ldots, l$ be prime $q$-exceptional divisors. Shrinking $P$, there exist some $0< \ep \ll 1$ and 
a linear map defined over $\Qq$, 
\[
L: P \to P_W, \quad \De \mapsto L(\De)\coloneqq \ti \De + (1-\ep) \Exc(p)
\] such that $P_W \subset (\oplus_{i=1}^k \ti D_i) \oplus (\oplus_{j=1}^l E_j)$ is a rational polytope, and
\begin{equation}\label{eq: 1}
K_W+L(\De)=p^*(K_X+\De)+E(\De),
\end{equation} such that $E(\De) \geq 0$ is $p$-exceptional and $(W, L(\De))$ is still klt for each $\De\in P$. Let $\De_{W,0} \coloneqq L(\De_0)$. Run a $(K_W+\De_{W,0})$-LMMP with scaling of an ample divisor over $X'$, then it terminates with $(W'/X', \De_{W',0})$ by \cite[Corollary 1.4.2]{BCHM10}. As $(X'/S, \De_0')$ is a weak log canonical model of $(X/S, \De_0)$, there exists a $q$-exceptional divisor $E_0 \geq 0$ such that
\[
p^*(K_X+\De_0)=q^*(K_{X'}+\De_0')+E_0.
\] Hence
\[
K_{W}+\De_{W,0}=q^*(K_{X'}+\De_0')+E(\De_0)+E_0.
\] Because $E(\De_0)+E_0 \geq 0$ is $q$-exceptional, we have
\[
K_{W'}+\De_{W',0}=q'^*(K_{X'}+\De_0'),
\] where $q': W' \to X'$ is the natural morphism. In particular,
\[
K_{W'}+\De_{W',0} \equiv 0/Z'.
\] Let $\theta: W \dto W'/X$ be the natural map. 
Shrinking $P$, we can assume that $\theta$ is $(K_W+L(\De))$-negative (see \cite[Definition 3.6.1]{BCHM10}) for each $\De\in P$. We set 
\[
L' \coloneqq \theta_*\circ L \text{~and~} P_{W'} \coloneqq L'(P)=\theta_*P_W.
\]

\noindent Step 3. By Step 1, $P_{W'} = \sqcup Q'^\circ_i$ can be decomposed into a disjoint union of finitely many relatively open rational polytopes such that for any $B',D'\in Q'^\circ_i$, if $(W_i''/Z', B'')$ is a weak log canonical model of $(W'/Z', B')$ then $(W_i''/Z', D'')$ is a weak log canonical model of $(W'/Z', D')$, where $B'',D''$ are the strict transforms of $B',D'$ respectively. In the sequel, we fix a $W''_i$ for each $Q'^\circ_i$.

We claim that after shrinking $P_{W'}$, for any $\De_{i} \in Q'^\circ_i$, $K_{W_i''}+\De_{i}''$ is nef over $S$, where $\De_{i}''$ is the strict transform of $\De_{i}$. Let $\De_{i}$ be a vertex of $\bar Q'^\circ_i$. By Theorem \ref{thm: HX13}, $(W_i''/Z', \De_{i}'')$ is semi-ample$/Z'$. Let $\tau: W_i'' \to T_i/Z'$ be the morphism such that $K_{W_i''}+\De_{i}''\sim_\Qq \tau^*H_i/Z'$, where $H_i$ is an ample$/Z'$ $\Qq$-Cartier divisor on $T_i$. Hence, there is a $\Qq$-Cartier divisor $\Theta$ on $Z'$ such that $K_{W_i''}+\De_i''=\tau^*H_i+(\mu\circ\tau)^*\Theta$, where $\mu: T_i \to Z'$. Then $t(H_i+\mu^*\Theta) +(1-t)\mu^*A$ is nef over $S$ when $t \in [0,t_0]$ for some rational number $0<t_0\ll 1$. Note that
\[
t(K_{W_i''}+\De_{i}'')+(1-t)(K_{W_i''}+\De_{W,0}'')\sim_\Rr \tau^*\left(t(H_i+\mu^*\Theta)+(1-t)\mu^*A\right)/S,
\] where $\De_{W,0}''$ is the strict transform of $\De_{W,0}$ on $W''_i$. Replacing $\De_{i}$ by $t_0 \De_{i}+(1-t_0)\De_{W,0}$ and repeating this process for each vertex of $\bar Q'^\circ_i$, we obtain a polytope satisfying the desired claim. Moreover, this property of $P_{W'}$ holds for any weak log canonical model $(W_i'''/Z', \De_i''')$ of $(W'/Z', \De_i)$. Indeed, if $\mu: V \to W_i'''$ and $\ti q: V \to W_i''$ are birational morphisms such that $\ti q \circ \mu^{-1}$ is the natural map $W_i''' \dto W_i''$, then the negativity lemma (see \cite[Lemma 3.39]{KM98}) implies that $\mu^*(K_{W_i'''}+\De_i''')=\ti q^*(K_{W_i''}+\De_i'')$. Hence, $K_{W_i'''}+\De_i'''$ is nef over $S$.

\noindent Step 4. Let $P\coloneqq (L')^{-1}(P_W)$ be the polytope corresponding to $P_W$ under the map $L'$. Let $Q^\circ_i \coloneqq (L')^{-1}(Q'^\circ_i)$, which is not necessarily a relatively open polytope. However, $Q^\circ_i$ can be written as a disjoint union of finitely many relatively open rational polytopes as $L'$ is a linear map defined over $\Qq$. Hence, without loss of generality, we can assume that $Q^\circ_i$ is a relatively open rational polytope. 

Therefore, we have
\[
P = \sqcup_i Q^\circ_i,
\] which is a disjoint union of finitely many relatively open rational polytopes. To complete the proof, it suffices to show that for any $B, D \in Q^\circ_i$, if $(Y/S, B_Y)$ is a weak log canonical model of $(X/S, B)$ then $(Y/S, D_Y)$ is a weak log canonical model of $(X/S, D)$. 

Let $r: V \to W, s: V \to W', \ti p: V \to Y, \ti q: V \to W_i''$ be birational morphisms which commute with the existing maps. Moreover, $r, s, \ti p, \ti q$ can be assumed to be log resolutions. By \eqref{eq: 1}, for $\De \in Q_i^\circ$,
\begin{equation}\label{eq:2}
r^*(K_W+L(\De))=r^*p^*(K_X+\De)+r^*(E(\De)).
\end{equation} As $\theta: W \dto W'$ is $(K_W+L(\De))$-negative, $(W_i''/S, L(\De)'')$ is also a weak log canonical model of $(W/S, L(\De))$, where $L(\De)''$ is the strict transform of $L(\De)$. Then there is a $\ti q$-exceptional divisor $F(\De) \geq 0$ such that
\[
r^*(K_W+L(\De)) = \ti q^*(K_{W_i''}+L(\De)'')+F(\De).
\] Combining with \eqref{eq:2}, we have
\[
\ti q^*(K_{W_i''}+L(\De)'')+F(\De)=r^* p^*(K_X+\De)+r^*(E(\De)).
\] Hence $-F(\De)+r^*(E(\De))$ is nef over $X$. As
\[
(p\circ r)_*(-F(\De)+r^*(E(\De)))=(p\circ r)_*(-F(\De)) \leq 0,
\] we have
\[
-F(\De)+r^*(E(\De)) \leq 0
\] by the negativity lemma. Let
\begin{equation}\label{eq:star}
r^*p^*(K_X+\De)=\ti p^*(K_Y+\De_Y)+\Theta(\De),
\end{equation} where $\Theta(\De)$ is $\ti p$-exceptional. Hence
\[
\ti q^*(K_{W_i''}+L(\De)'')+F(\De)=\ti p^*(K_Y+\De_Y) + \Theta(\De)+r^*(E(\De)).
\] As $-F(\De) + \Theta(\De)+r^*(E(\De))$ is nef over $Y$ and $$\ti p_*(-F(\De) + \Theta(\De)+r^*(E(\De)))=\ti p_*(-F(\De)) \leq 0,$$ we have $-F(\De) + \Theta(\De)+r^*(E(\De)) \leq 0$ by the negativity lemma. 

Now we use that $(Y/S, B_Y)$ is a weak log canonical model of $(X/S, B)$. As $K_Y+B_Y$ is nef$/S$, $F(B) - \Theta(B)-r^*(E(B))$ is nef over $W_i''$. As $F(B)$ is $\ti q$-exceptional, we have
\[
\ti q_*(F(B) - \Theta(B)-r^*(E(B))) \leq 0.
\] By the negativity lemma again, we have $F(B) - \Theta(B)-r^*(E(B)) \leq 0$. Therefore, $F(B) - \Theta(B)-r^*(E(B)) = 0$. Note that $F(\De) - \Theta(\De)-r^*(E(\De))$ is linear for $\De \in Q^\circ_i$. Because $Q^\circ_i$ is relatively open and $F(\De) - \Theta(\De)-r^*(E(\De)) \geq 0$ for each $\De \in Q^\circ_i$, we have $F(\De) - \Theta(\De)-r^*(E(\De)) = 0$ for each $\De \in Q^\circ_i$. Thus $\Theta(\De)=F(\De)-r^*(E(\De)) \geq 0$ and
\[
\ti q^*(K_{W_i''}+L(\De)'') = \ti p^*(K_Y+\De_Y)
\] is nef$/S$ for any $\De \in Q^\circ_i$. As $X \dto Y$ does not extract divisors, $(Y/S, D_Y)$ is also a weak log canonical model of $(X/S, D)$ by \eqref{eq:star}. 
\end{proof}

\begin{remark}\label{rmk: use good mm}
In the proof of Theorem \ref{thm: Shokurov}, we use the existence of good minimal models in Step 1 and Step 3. In Step 1, this is needed to ensure that the statement of Theorem \ref{thm: Shokurov} holds true for lower-dimensional polytopes. In Step 3, let $L'(\De)$ be the strict transform of $L(\De)$ on $W'$, then we need that $(F', L'(\De)|_{F'})$ has a good minimal model, where $F'$ is a general fiber of $\pi\circ q': W' \to Z'$.
\end{remark}

\begin{theorem}\label{thm: Shokurov-Choi}
Let $(X,\De)\to S$ be a klt Calabi-Yau fiber space. Assume that good minimal models of effective klt pairs exist in dimension $\dim(X/S)$. Let $P \subset \Eff(X/S)$ be a rational polyhedral cone. Then $P$ is a finite union of relatively open rational polyhedral cones $P = \cup_{i=0}^m P^\circ_i$ such that whenever
\begin{enumerate}
\item $B, D$ are effective divisors with $[B], [D] \in P^\circ_i$, and
\item $(X, \De+\ep B), (X, \De+\ep D)$ are klt for some $\ep \in \Rr_{> 0}$,
\end{enumerate} 
then if $(Y/S, \De_Y+\ep B_Y)$ is a weak log canonical model of $(X/S, \De+\ep B)$, then $(Y/S, \De_Y+\ep D_Y)$ is a weak log canonical model of $(X/S, \De+\ep D)$.
\end{theorem}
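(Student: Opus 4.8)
The plan is to deduce the statement from Theorem~\ref{thm: Shokurov} by taking a ``cone over a polytope'', the key point being that the Calabi-Yau hypothesis forces the weak log canonical model of a perturbation $(X,\De+\ep B)$ to depend only on the numerical class $[B]\in N^1(X/S)_\Rr$, and not on $\ep$.

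First I would record a numerical reduction. Let $\phi\colon X\dto Y/S$ be a birational contraction with $Y$ normal, $\Qq$-factorial and projective$/S$, and let $p\colon W\to X$, $q\colon W\to Y$ be a common resolution. Since $\phi$ extracts no divisor, every $p$-exceptional prime divisor on $W$ is $q$-exceptional; hence for any $\Rr$-Cartier $\Theta$ on $X$ with $\Theta\equiv 0/S$ the $\Rr$-divisor $p^*\Theta-q^*\phi_*\Theta$ is $q$-exceptional, has zero pushforward to $Y$, and is numerically trivial over $Y$ (a $q$-contracted curve maps under $p$ to $0$ or to a curve contracted by $\phi$, hence by $X\to S$, on which $\Theta$ is trivial), so it vanishes by the negativity lemma. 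Taking $\Theta=K_X+\De$ gives $p^*(K_X+\De)=q^*(K_Y+\De_Y)$, exactly as in the proof of Lemma~\ref{le: lift to iso in codim 1}. Combining these two facts, for an effective divisor $B$ with $(X,\De+\ep B)$ klt one finds that $(Y/S,\De_Y+\ep B_Y)$ is a weak log canonical model of $(X/S,\De+\ep B)$ if and only if $\phi_*B$ is nef$/S$ and $p^*B\ge q^*\phi_*B$; this is independent of $\ep>0$, and depends on $B$ only through $[B]$ (if $B'\equiv B/S$ then $p^*B'-q^*\phi_*B'=p^*B-q^*\phi_*B$ by the vanishing just established, and $\phi_*B'\equiv\phi_*B/S$). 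This settles the ``for some $\ep$'' clause and reduces the problem to producing a conical decomposition of $P$ separating this numerical datum.

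Next I would pass to a polytope. Pick effective $\Qq$-divisors $D_1,\dots,D_k$ with $P=\Cone([D_1],\dots,[D_k])$; after rescaling and fixing a small rational $\delta>0$, $(X,\De+\sum_i t_iD_i)$ is klt for all $t_i\ge0$ with $\sum_i t_i\le\delta$, and $\mathcal P:=\De+\Conv(0,\delta D_1,\dots,\delta D_k)$ is a rational polytope contained in $\oplus_G[0,1)\,G$, the sum over the prime components $G$ of $\De$ and of the $D_i$ (if $\De$ is not rational, first write $\De=\sum_j r_j\De_j$ with $\De_j$ a $\Qq$-boundary and $K_X+\De_j\sim_\Qq 0/S$, and enlarge $\mathcal P$ to contain the $\De_j$). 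For $\Gamma\in\mathcal P$ the pair $(X,\Gamma)$ is klt, and $K_F+\Gamma|_F\sim_\Rr(\Gamma-\De)|_F\ge0$ on a general fiber, so $\ka(K_F+\Gamma|_F)\ge0$; thus Theorem~\ref{thm: Shokurov} applies. Since the vertex $\De$ of $\mathcal P$ is a Calabi-Yau point ($K_X+\De\sim_\Rr 0/S$), running the argument of Step~1 of that proof at $\De$ produces a decomposition $\mathcal P=\sqcup_a\mathcal Q^\circ_a$ that is \emph{conical near $\De$}: there is a rational polyhedral fan $\Sigma$ supported on $\sum_i\Rr_{\ge0}D_i$ such that whenever $v,w$ are small and lie in the relative interior $\sigma^\circ$ of a common cone $\sigma\in\Sigma$, the points $\De+v$ and $\De+w$ lie in one $\mathcal Q^\circ_a$.

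Finally I would transfer this to $N^1(X/S)_\Rr$. Let $\ell\colon\mathrm{Div}_\Rr(X)\to N^1(X/S)_\Rr$ be the natural map; each $\ell(\sigma^\circ)=\ell(\sigma)^\circ$ ($\sigma\in\Sigma$) is an open rational polyhedral cone, and since the sets $\sigma^\circ$ partition $\sum_i\Rr_{\ge0}D_i$ we have $P=\bigcup_{\sigma\in\Sigma}\ell(\sigma^\circ)$; take $\{P^\circ_i\}$ to be this (finite, possibly overlapping) family. Given $B,D$ effective with $[B],[D]\in P^\circ_i=\ell(\sigma^\circ)$, with $(X,\De+\ep B)$ and $(X,\De+\ep D)$ klt and $(Y/S,\De_Y+\ep B_Y)$ a weak log canonical model of $(X/S,\De+\ep B)$ via $\phi\colon X\dto Y$ (we may take $Y$ $\Qq$-factorial after a small $\Qq$-factorialization, which affects neither the hypothesis nor the conclusion): for small $\ep_0>0$ there are $v_B,v_D\in\sigma^\circ$ with $[v_B]=\ep_0[B]$, $[v_D]=\ep_0[D]$ and $\De+v_B,\De+v_D\in\mathcal P$, and by the previous paragraph $\De+v_B$, $\De+v_D$ lie in one $\mathcal Q^\circ_a$. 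Since $v_B\equiv\ep_0 B/S$, the criterion of the second paragraph shows $(Y/S,\De_Y+(v_B)_Y)$ is a weak log canonical model of $(X/S,\De+v_B)$; by Theorem~\ref{thm: Shokurov} so is $(Y/S,\De_Y+(v_D)_Y)$ for $(X/S,\De+v_D)$; reading this back through the criterion for $v_D\equiv\ep_0 D/S$ gives that $\phi_*D$ is nef$/S$ and $p^*D\ge q^*\phi_*D$, i.e.\ $(Y/S,\De_Y+\ep D_Y)$ is a weak log canonical model of $(X/S,\De+\ep D)$ for the original $\ep$. The delicate point I expect is the conical structure at the vertex $\De$ — i.e.\ that Step~1 of the proof of Theorem~\ref{thm: Shokurov} genuinely yields a fan $\Sigma$ near $\De$ — together with the bookkeeping needed to push the cones $\sigma^\circ$ through the (possibly non-injective) map $\ell$; by contrast the numerical reduction in the second paragraph and the appeal to Theorem~\ref{thm: Shokurov} are comparatively routine.
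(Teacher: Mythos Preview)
Your approach is correct and follows the same strategy as the paper: reduce to Theorem~\ref{thm: Shokurov} via the Calabi--Yau scaling property. The execution differs in two places, and the paper's choices make the argument shorter.

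First, instead of your explicit numerical criterion for being a weak log canonical model, the paper uses the single line
\[
K_X+\De+\De_B \;\equiv\; \frac{1}{\ep t}\,(K_X+\De+\ep B)\quad /S
\]
(where $[B]=t[\De_B]$), which immediately shows that $(Y/S,\De_Y+\ep B_Y)$ is a weak log canonical model of $(X/S,\De+\ep B)$ iff $(Y/S,\De_Y+\De_{B,Y})$ is one of $(X/S,\De+\De_B)$. This encodes both the $\ep$-independence and the numerical dependence in one stroke, without unpacking the discrepancy condition or invoking the negativity lemma.

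Second, and more importantly, the paper avoids your ``delicate point'' entirely. Rather than taking a polytope with $\De$ as a vertex and arguing that the decomposition from Theorem~\ref{thm: Shokurov} is conical there (which, as you note, requires reaching into Step~1 of that proof and adapting it to a boundary point), the paper takes a rational cross-section $\ti P$ of $P$ with $0\notin\ti P$ and $\Rr_{\ge0}\cdot\ti P=P$, applies Theorem~\ref{thm: Shokurov} to $\ti P+\De$ as a black box, and then simply cones each piece $Q_i^\circ$ to the origin to obtain the $P_i^\circ$. The scaling formula above is exactly what lets you move freely along rays, so no internal structure of the Shokurov decomposition is needed. Your fan $\Sigma$ does exist, but you do not need to build it: slicing away from the apex gives it to you for free. (The paper also first reduces to $\De$ a $\Qq$-divisor, which is your parenthetical; it does this by noting that $\{\Theta:K_X+\Theta\equiv0/S\}$ is a $\Qq$-subspace of the span of the components of $\De$.)
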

\begin{proof}
Let $\De=\sum_{i=1}^m c_i \De_i$ be the decomposition into irreducible components. Then
\[
\{\Theta \in \oplus_{i=1}^m \Rr \cdot \De_i \mid K_X+\Theta \equiv 0/S\} 
\] is a subspace of $\oplus_{i=1}^m \Rr \cdot \De_i$ defined over $\Qq$. Hence, there exists a $\Qq$-Cartier divisor $\ti \De$ such that $(X, \ti\De) \to S$ is a klt Calabi-Yau fiber space. Let $\ti\ep \in \Rr_{>0}$ such that $(X, \ti\De+\ti\ep B)$ is klt. By $$K_X+\De+\ep B \equiv \frac{\ep}{\ti \ep}(K_X+\ti\De+\ti\ep B)/S,$$ $(Y/S, \De_Y+\ep B_Y)$ is a weak log canonical model of $(X/S,\De+\ep B)$ iff $(Y/S, \ti\De_Y+\ti\ep B_Y)$ is a weak log canonical model of $(X/S,\ti\De+\ti\ep B)$. Therefore, replacing $\De$ by $\ti\De$, we can assume that $\De$ is a $\Qq$-Cartier divisor.

Let $\ti P = \Conv(\De_j \mid j=1\ldots k)$ be a rational polytope generated by effective $\Qq$-Cartier divisors $\De_j \geq 0, j=1\ldots k$ such that $\Rr_{\geq 0} \cdot [\ti P] = P$. Here $[\ti P]$ is the image of $\ti P$ in $N^1(X/S)_\Rr$. We can choose $\ti P$ such that $0 \not\in [\ti P]$. Replacing $\De_j$ by $\ep \De_j$ for some $\ep \in \Qq_{>0}$, we can assume that $(X, \De+\De_j)$ is klt for each $j$. Let $$\ti P+\De = \sqcup_{i=1}^m (Q_i^\circ+\De)$$ be the decomposition as in Theorem \ref{thm: Shokurov}. For each relatively open rational polytope $Q_i^\circ+\De$, and $\Theta_1+\De, \Theta_2+\De \in Q_i^\circ+\De$, if $(Y/S, \Theta_{1,Y}+\De_Y)$ is a weak log canonical model of $(X/S, \Theta_1+\De)$, then $(Y/S, \Theta_{2,Y}+\De_Y)$ is a weak log canonical model of $(X/S, \Theta_2+\De)$. Let $P_i^\circ$ be the image of the relatively open rational polyhedral cone $\Rr_{>0} \cdot Q_i^\circ$ in $N^1(X/S)_\Rr$. Set $P_0^\circ=\{0\}$, then $P = \cup_{i=0}^m P_i^\circ$. Note that this union may not be disjoint.

The claim certainly holds true for $P_0^\circ$. For effective divisors $B, D$ with $[B],[D]\in P_i^\circ, i>0$, there exist $\De_B, \De_D \in Q_i^\circ$ such that
\[
[B]=t[\De_B],\quad [D]=s[\De_D] \quad \text{for some~} t, s \in \Rr_{>0}.
\] By $K_X+\De \equiv 0/S$,
\begin{equation}\label{eq:num propotion}
\begin{split}
K_X+\De+\De_B &\equiv \frac{1}{\ep t}(K_X+\De+\ep B)/S\\
K_X+\De+\De_D &\equiv \frac{1}{\ep s}(K_X+\De+\ep D)/S.
\end{split}
\end{equation} Therefore, $(Y/S, \De_Y+\ep B_Y)$ is a weak log canonical model of $(X/S, \De+\ep B)$ iff $(Y/S, \De_Y+\De_{B,Y})$ is a weak log canonical model of $(X/S, \De+\De_B)$. By Theorem \ref{thm: Shokurov}, this implies that $(Y/S, \De_Y+\De_{D,Y})$ is a weak log canonical model of $(X/S, \De+\De_D)$. Hence $(Y/S, \De_Y+\ep D_Y)$ is a weak log canonical model of $(X/S, \De+\ep D)$ by \eqref{eq:num propotion} again.
\end{proof}

\begin{theorem}[{\cite[\S 6.2. First main theorem]{Sho96}}]\label{thm: nef cone is polyhedral}
Let $(X, \De) \to S$ be a klt Calabi-Yau fiber space. Let $P\subset \Eff(X/S)$ be a rational polyhedral cone. Then
\[
P_N \coloneqq P \cap \bAmp(X/S)
\] is a rational polyhedral cone.
\end{theorem}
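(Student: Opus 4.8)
The plan is to feed the rational polyhedral cone $P$ into Theorem~\ref{thm: Shokurov-Choi} and then reduce to an elementary fact about convex cones. Applying that theorem, write $P=\bigcup_{i=0}^m P^\circ_i$ as a finite union of open rational polyhedral cones (with $P^\circ_0=\{0\}$) on which the weak log canonical model of $(X/S,\De+\ep B)$ is ``locally constant'' in the sense of Theorem~\ref{thm: Shokurov-Choi}. The heart of the matter is the dichotomy
\[
\text{for every } i:\qquad P^\circ_i\subset\bAmp(X/S)\ \text{ or }\ P^\circ_i\cap\bAmp(X/S)=\emptyset.
\]
Granting this, put $I=\{\,i : P^\circ_i\subset\bAmp(X/S)\,\}$ (note $0\in I$, since $0\in\bAmp(X/S)$). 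For $i\in I$ the closed set $\overline{P^\circ_i}$ lies in both $P$ and $\bAmp(X/S)$, so $\overline{P^\circ_i}\subset P_N$; conversely any $v\in P_N$ lies in some $P^\circ_j$ of the decomposition, and $v\in P^\circ_j\cap\bAmp(X/S)$ forces $j\in I$ by the dichotomy, whence $v\in\overline{P^\circ_j}$. Thus $P_N=\bigcup_{i\in I}\overline{P^\circ_i}$ is a finite union of rational polyhedral cones; since it is also convex (it is $P\cap\bAmp(X/S)$), it equals the closed convex cone generated by the finitely many rational generating rays of the $\overline{P^\circ_i}$, $i\in I$, and is therefore a rational polyhedral cone.

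To prove the dichotomy I would use the Calabi-Yau hypothesis decisively. Since $K_X+\De\sim_\Rr 0/S$, for any effective $\Rr$-Cartier divisor $B$ and any $\ep>0$ we have $K_X+\De+\ep B\equiv\ep B/S$; hence, once $(X,\De+\ep B)$ is klt, the identity map realizes $(X/S,\De+\ep B)$ as its own weak log canonical model precisely when $\ep B$ is nef$/S$, i.e.\ $[B]\in\bAmp(X/S)$. Now suppose $v\in P^\circ_i\cap\bAmp(X/S)$. Choose an effective $\Rr$-Cartier divisor $B$ with $[B]=v$ (possible as $v\in\Eff(X/S)$); it is nef$/S$, and we may choose $\ep_0>0$ with $(X,\De+\ep_0 B)$ klt, so $(X/S,\De+\ep_0 B)$ is its own weak log canonical model. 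Given an arbitrary $w\in P^\circ_i$, pick an effective $\Rr$-Cartier $D$ with $[D]=w$ and replace it by $\mu D$ with $\mu>0$ small enough that $(X,\De+\ep_0\mu D)$ is klt; as $[\mu D]\in P^\circ_i$, Theorem~\ref{thm: Shokurov-Choi} (applied with the common constant $\ep=\ep_0$) gives that $(X/S,\De+\ep_0\mu D)$ is also its own weak log canonical model. Hence $\ep_0\mu D$ is nef$/S$, so $D$ is nef$/S$, i.e.\ $w\in\bAmp(X/S)$. This shows $P^\circ_i\subset\bAmp(X/S)$, completing the dichotomy.

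I expect the dichotomy to be the main obstacle: it is needed for \emph{all} classes of $P^\circ_i$, including irrational ones, whereas Theorem~\ref{thm: Shokurov-Choi} is phrased in terms of effective divisors, a common coefficient $\ep$, and the klt condition. The homogeneity coming from $K_X+\De\sim_\Rr 0/S$ is exactly what lets one rescale $D$ to $\mu D$ without changing the conclusion and work with a single $\ep_0$. Two standard inputs are used along the way: that a fixed effective $\Rr$-Cartier divisor $B$ satisfies $(X,\De+\ep B)$ klt for all sufficiently small $\ep>0$, and that $\bAmp(X/S)$ coincides with the nef cone. Finally, since the argument routes through Theorem~\ref{thm: Shokurov-Choi}, it (like \cite{Sho96}) presupposes the existence of good minimal models for effective klt pairs in dimension $\dim(X/S)$.
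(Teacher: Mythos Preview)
Your argument is logically sound, but it proves a weaker statement than the theorem as written: you invoke Theorem~\ref{thm: Shokurov-Choi}, which carries the hypothesis that good minimal models exist for effective klt pairs in dimension $\dim(X/S)$, and you then import this hypothesis into your conclusion. Your parenthetical ``(like \cite{Sho96})'' is not correct: Shokurov's result from \cite{Sho96} (see also \cite[Proposition~3.2~(3)]{Bir11}) is unconditional---it follows from the cone theorem and boundedness of extremal rays, not from any minimal model input---and the paper exploits exactly this. The paper's proof is therefore both shorter and strictly stronger: it chooses effective $\Qq$-Cartier generators $D_1,\dots,D_k$ of $P$, rescales so that $(X,\De+D_j)$ is klt for each $j$, and applies Shokurov's theorem directly to conclude that
\[
\NN=\{D\in\oplus_{j=1}^k[0,1]D_j\ :\ D\text{ is nef over }S\}
\]
is a rational polytope; its image in $N^1(X/S)_\Rr$ then generates $P_N$ as a cone.

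By contrast, your route passes through the full chamber decomposition of $P$ into pieces with constant weak log canonical model, and then observes the dichotomy that a chamber meeting $\bAmp(X/S)$ must lie inside it (since on a Calabi--Yau fiber space the identity is a weak log canonical model precisely when the boundary increment is nef). This is a correct and conceptually pleasant argument, and it mirrors how the paper later handles the movable-cone analogue in Lemma~\ref{lem: inclusion}~(2); but for the nef case it is overkill and costs you the unconditional statement. If you want to keep your approach, you should at minimum drop the claim that \cite{Sho96} presupposes good minimal models, and flag explicitly that your proof only establishes the theorem under that extra assumption.
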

\begin{proof}
Let $D_j, 1 \leq j \leq k$ be effective $\Qq$-Cartier divisors on $X$ such that $P = \Cone([D_j] \mid 1 \leq j \leq k)$. Replacing $D_j$ by $\ep D_j$ for some $\ep \in \Qq_{>0}$, we can assume that $(X, \De+ D_j)$ is klt for each $j$. Then
\[
\NN = \{D \in \oplus_{j=1}^k [0,1]D_j \mid D \text{~is nef over~} S\}
\] is a rational polytope by \cite[\S 6.2. First main theorem]{Sho96} (also see \cite[Proposition 3.2 (3)]{Bir11}). The image $[\NN]$ of $\NN$ in $N^1(X/S)_\Rr$ is still a rational polytope. By the construction,
\[
P_N = \Cone([\NN]).
\] Thus $P_N$ is a rational polyhedral cone.
\end{proof}

\section{Geometry of convex cones}\label{sec: Geometry of convex cones}

Let $V(\Zz)$ be a lattice and $V(\Qq) \coloneqq V(\Zz) \otimes_\Zz \Qq$, $V  \coloneqq V(\Qq) \otimes_\Qq \Rr$. A cone $C \subset V$ is non-degenerate if it does not contain an affine line. This is equivalent to saying that its closure $\bar C$ does not contain a nontrivial vector subspace.

In the following, we assume that $\Gamma$ is a group and $\rho: \Gamma \to {\rm GL(V)}$ is a group homomorphism. The group $\Gamma$ acts on $V$ through $\rho$. For $\gamma \in \Gamma$ and $x \in V$, we write $\gamma \cdot x$ or $\gamma x$ for the action. For a set $S\subset V$, set $\Gamma \cdot S \coloneqq \{\gamma \cdot x \mid  \gamma \in \Gamma, x\in S\}$. Suppose that this action leaves a convex cone $C$ and some lattice in $V(\Qq)$ invariant. We assume that $\dim C=\dim V$. The following definition slightly generalizes \cite[
Proposition-Definition 4.1]{Loo14}.

\begin{definition}\label{def: polyhedral type} 
Under the above notation and assumptions.
\begin{enumerate}
\item Suppose that $C \subset V$ is an open convex cone (may be degenerate). Let
\[
C_+ \coloneqq \Conv(\bar C \cap V(\Qq))
\] be the convex hull of rational points in $\bar C$. 

\item We say that $(C_+, \Gamma)$ is of polyhedral type if there is a polyhedral cone $\Pi \subset C_+$ such that $\Gamma \cdot \Pi \supset C$.
\end{enumerate}
\end{definition}

\begin{remark}
Recall that a polyhedral cone is closed by definition (see Section \ref{subsec: Movable cones and ample cones}). Also note that the openness of $C$ is not strictly necessary (see \cite[Definition~3.2]{GLSW24}). For instance, Definition \ref{def: polyhedral type} (2) could be modified to require $\Gamma \cdot \Pi \supset \Int(C)$. However, we adopt only a minimal modification of the original definition in \cite{Loo14}.
\end{remark}

\begin{proposition}[{\cite[Proposition-Definition 4.1]{Loo14}}]\label{prop: prop-def}
Under the above notation and assumptions. If $C$ is non-degenerate, then the following conditions are equivalent:
\begin{enumerate}
\item there exists a polyhedral cone $\Pi \subset C_+$ with $\Gamma \cdot  \Pi = C_+$;
\item there exists a polyhedral cone $\Pi \subset C_+$ with $\Gamma \cdot  \Pi \supset C$.
\end{enumerate}
Moreover, in case (2), we necessarily have $\Gamma \cdot  \Pi = C_+$.
\end{proposition}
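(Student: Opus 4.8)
My plan is as follows. The implication (1) $\Rightarrow$ (2) is immediate, since $C\subseteq C_+$ (every point of the open cone $C$ is a convex combination of nearby rational points of $C$), so $\Gamma\cdot\Pi = C_+\supseteq C$. For the converse together with the final assertion, I would reduce everything to the single implication
\[
\Gamma\cdot\Pi\supseteq C \ \Longrightarrow\ \Gamma\cdot\Pi = C_+ .
\]
This reduction is easy: $\Gamma$ preserves the chosen lattice and the cone $C$, hence also $\bar C\cap V(\Qq)$ and therefore $C_+$; since $\Pi\subseteq C_+$, we get $\Gamma\cdot\Pi\subseteq C_+$ for free, so only the reverse inclusion is at issue, and proving it gives (1) with the same $\Pi$.

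Assuming $\Gamma\cdot\Pi\supseteq C$, I would first note that $\Pi$ is necessarily full-dimensional — a countable union of lower-dimensional cones cannot cover the full-dimensional open set $C$ — and fix a point $p\in\mathrm{int}\,\Pi$, which lies in $C$ (indeed $\mathrm{int}\,\Pi\subseteq\mathrm{int}\,C_+ = C$, since $C=\mathrm{int}\,\bar C$ and $C\subseteq C_+\subseteq\bar C$). The same observation gives $C_+\setminus C\subseteq\partial\bar C$, so it suffices to show $x\in\Gamma\cdot\Pi$ for an arbitrary $x\in\partial\bar C\cap C_+$ (the case $x=0$ being trivial). Here I would approximate $x$ from inside $C$: the half-open segment $\{(1-t)p + tx : 0\le t<1\}$ lies in $\mathrm{int}\,\bar C = C\subseteq\Gamma\cdot\Pi$, so choosing $t_n\uparrow 1$ produces points $z_n := (1-t_n)p + t_n x\to x$ with $z_n = \gamma_n\pi_n$ for some $\gamma_n\in\Gamma$ and $\pi_n\in\Pi$.

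If some subsequence of the $\gamma_n$ is constant, equal to $\gamma$ say, then $\gamma^{-1}z_n\in\Pi$ and $\gamma^{-1}z_n\to\gamma^{-1}x$; as $\Pi$ is closed this forces $\gamma^{-1}x\in\Pi$, i.e.\ $x\in\gamma\Pi\subseteq\Gamma\cdot\Pi$, and we are done. The entire difficulty is therefore to exclude the alternative, that the $\gamma_n$ can only be taken pairwise distinct. For this I would combine two ingredients: $(\mathrm{i})$ the action of $\Gamma$ on the non-degenerate open cone $C$ is properly discontinuous — a standard consequence of $\Gamma$ preserving a lattice, see \cite{Loo14}; and $(\mathrm{ii})$ polyhedrality of $\Pi$, which implies that $\Pi\cap\partial\bar C$ is a union of finitely many faces of $\Pi$, so that a translate $\gamma\Pi$ can approach a point of $\partial\bar C$ only along the $\gamma$-image of one of these finitely many faces. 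Putting $(\mathrm{i})$ and $(\mathrm{ii})$ together, I expect to be able to produce a neighbourhood $U$ of $x$ in $V$ such that $\gamma\Pi\cap U\neq\emptyset$ already forces $x\in\gamma\Pi$; since $x\in\overline{C}$ and $C\subseteq\Gamma\cdot\Pi$, some translate meets every neighbourhood of $x$, hence such a $\gamma$ exists and $x\in\Gamma\cdot\Pi$. This establishes $C_+\subseteq\Gamma\cdot\Pi$, hence $\Gamma\cdot\Pi = C_+$, which gives (1) and the final assertion simultaneously. I expect this last step — ruling out infinitely many distinct translates clustering at a boundary point of $C_+$ — to be the only genuine obstacle: it is exactly where non-degeneracy of $C$ and polyhedrality of $\Pi$ both get used, the statement failing if either is dropped, and making the ``neighbourhood $U$'' claim precise will require carefully tracking how the finitely many boundary faces of $\Pi$ are moved by the translates that accumulate at $x$.
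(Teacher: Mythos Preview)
The paper does not supply its own proof of this proposition; it is stated with a citation to \cite[Proposition-Definition 4.1]{Loo14} and no argument is given in the paper. So there is nothing in the paper itself to compare your proposal against.

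On the proposal itself: your reductions are correct, and you have accurately isolated the one non-trivial step. But the sketch you give for that step does not close. Proper discontinuity of the $\Gamma$-action on $C$ is a statement about compacta in the \emph{open} cone; it gives no direct control at a point $x\in\partial\bar C$, which is precisely where you need it. Noting that $\Pi\cap\partial\bar C$ has finitely many faces does not help either: each $\gamma_n$ moves those faces to a new location, and with infinitely many distinct $\gamma_n$ there is no a priori reason the images should accumulate only at points already in some $\gamma\Pi$. In other words, the neighbourhood $U$ you posit is exactly the thing that needs to be produced, and your two ingredients as stated do not produce it. Looijenga's argument for this implication does not go through a neighbourhood of $x$ of this kind; it exploits the lattice structure and the polyhedrality of $\Pi$ to obtain a finiteness statement valid on all of $C_+$ (not merely on $C$), and this is where non-degeneracy of $C$ is really used. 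To complete your proof you will need an argument of that type, or simply to invoke Looijenga's result at this point.
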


\begin{definition}\label{def: fundamental domain}
Let $\rho: \Gamma \hookrightarrow {\rm GL}(V)$ be an injective group homomorphism and $C \subset V$ be a cone (may not necessarily be open). Let $\Pi \subset C $ be a (rational) polyhedral cone. Suppose that $\Gamma$ acts on $C$. Then $\Pi$ is called a weak (rational) polyhedral fundamental domain for $C$ under the action $\Gamma$ if 
\begin{enumerate}
\item $\Gamma \cdot \Pi = C$, and
\item for each $\gamma \in \Gamma$, either $\gamma \Pi = \Pi$ or $\gamma\Pi \cap \Int(\Pi) = \emptyset$. 
\end{enumerate}

Moreover, for $\Gamma_\Pi\coloneqq \{\gamma \in \Gamma \mid \gamma\Pi = \Pi\}$, if $\Gamma_\Pi=\{{\rm id}\}$, then $\Pi$ is called a (rational) polyhedral fundamental domain.
\end{definition}

\begin{lemma}[{\cite[Theorem 3.8 \& Application 4.14]{Loo14}}]\label{le: existence of fun domain}
Under the notation and assumptions of Definition \ref{def: polyhedral type}, suppose that $\rho: \Gamma \hookrightarrow {\rm GL}(V)$ is injective. Let $(C_+, \Gamma)$ be of polyhedral type with $C$ non-degenerate. Then under the action of $\Gamma$, $C_+$ admits a rational polyhedral fundamental domain.
\end{lemma}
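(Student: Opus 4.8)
The plan is to derive this directly from Looijenga's work, observing that the non-degeneracy hypothesis puts us in exactly his framework. We have an open non-degenerate convex cone $C\subset V$, a group $\Gamma$ acting on $V$ through an injective $\rho$ and preserving $C$ together with a lattice in $V(\Qq)$, and $C_+=\Conv(\bar C\cap V(\Qq))$ is the rational closure. By Proposition \ref{prop: prop-def}, the hypothesis that $(C_+,\Gamma)$ is of polyhedral type is equivalent to the existence of a polyhedral cone $\Pi\subset C_+$ with $\Gamma\cdot\Pi=C_+$, which is Looijenga's finiteness condition. So I would simply check these hypotheses and invoke \cite[Theorem 3.8 and Application 4.14]{Loo14} to produce a rational polyhedral fundamental domain in the sense of Definition \ref{def: fundamental domain}. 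For orientation I would indicate the mechanism behind the cited results.

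First I would pass to a rational polyhedral subcone of $\Pi$ still satisfying $\Gamma\cdot\Pi=C_+$ (possible because $\Pi$ is full-dimensional and $C_+$ is generated by its rational rays; this is part of what Looijenga's argument supplies). Next I would dualize: set $C^\vee=\{\xi\in V^*:\langle\xi,x\rangle\ge 0\text{ for all }x\in\bar C\}$, a full-dimensional non-degenerate cone on which $\Gamma$ acts through the contragredient representation, preserving the dual lattice. Choose a rational $\xi_0\in\Int(C^\vee)$ that is \emph{generic}, so that no nontrivial element of $\Gamma$ fixes it; this is possible since $\rho$ is injective (hence so is the contragredient), $\Gamma$ is countable, and each nontrivial element fixes only a proper subspace of $V^*$, so the rational points of the open set $\Int(C^\vee)$ cannot all lie in that countable union of proper subspaces. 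Then form the Dirichlet-type domain
\[
D:=\{x\in C_+:\langle\xi_0,x\rangle\le\langle\xi_0,\gamma x\rangle\text{ for all }\gamma\in\Gamma\},
\]
a closed convex subcone of $C_+$ cut out by rational half-space conditions. Non-degeneracy of $C$ makes $\langle\xi_0,\cdot\rangle$ proper on $\bar C$, so along each $\Gamma$-orbit the minimum of $\gamma\mapsto\langle\xi_0,\gamma x\rangle$ is attained; this gives $\Gamma\cdot D=C_+$, while the defining inequalities immediately give $\gamma D\cap\Int(D)=\emptyset$ for $\gamma\neq\mathrm{id}$, and genericity of $\xi_0$ forces $\Gamma_D=\{\mathrm{id}\}$.

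The crux, and the step I expect to be the main obstacle, is to show that $D$ is genuinely a \emph{rational polyhedral} cone, i.e. that only finitely many of the conditions $\langle\xi_0,x\rangle\le\langle\xi_0,\gamma x\rangle$ are non-redundant, equivalently that $D$ meets only finitely many of its translates $\gamma D$ in full dimension. In general $\Gamma$ need not act properly discontinuously on $C$, so this finiteness is not automatic; it is precisely where the polyhedral-type hypothesis is used in an essential way. Following Looijenga, one exploits that $C_+$ is the union of the $\Gamma$-translates of the single rational polyhedral cone $\Pi$: a compactness argument on the finitely many faces of $\Pi$, using non-degeneracy to control the relevant truncations of $\bar C$, bounds the set of ``neighbours'' of $D$, so $D$ has finitely many walls and, being defined by rational inequalities, is rational polyhedral. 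This $D$ is then the desired fundamental domain. Without non-degeneracy and polyhedral type one obtains at best a locally rational polyhedral domain with infinitely many faces, which is why the lemma is stated under exactly these hypotheses.
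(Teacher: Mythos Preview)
Your overall approach is the same as the paper's: pass to the dual cone, choose a rational interior point $\xi_0$ with trivial stabilizer, and take the associated Dirichlet-type cone from \cite[Application 4.14]{Loo14}. The gap is in your justification that such a $\xi_0$ exists. You argue that the rational points of $\Int(C^\vee)$ cannot all lie in the countable union of fixed subspaces of the nontrivial $\gamma\in\Gamma$. But a countable union of proper rational subspaces \emph{can} contain every rational vector: already in $\Qq^2$ the lines $y=qx$ (for $q\in\Qq$) together with $x=0$ cover everything. So this Baire-type reasoning, while fine over $\Rr$, fails over $\Qq$, and you have not produced a rational $\xi_0$ with $\Gamma_{\xi_0}=\{1\}$.

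The paper closes exactly this gap by first invoking \cite[Theorem 3.8]{Loo14}: under the polyhedral-type hypothesis, $\Gamma$ acts \emph{properly discontinuously} on the interior $(C^*)^\circ$ of the dual cone. Consequently, for any full-dimensional polyhedral cone $P\subset(C^*)^\circ$ the set $\{\gamma\in\Gamma:\gamma P^\circ\cap P^\circ\neq\emptyset\}$ is finite, so inside $P^\circ$ one only has to avoid \emph{finitely many} proper rational subspaces, and a general rational $\xi_0\in P^\circ$ has $\Gamma_{\xi_0}=\{1\}$. Proper discontinuity is also what underlies the two facts you assert later without proof---that orbit minima $\min_\gamma\langle\xi_0,\gamma x\rangle$ are attained and that the Dirichlet cone $D$ has only finitely many walls; you correctly sense in your last paragraph that the polyhedral-type hypothesis is the crux, but the precise input you are missing throughout is Theorem~3.8.
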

\begin{proof}
Let $V^*$ be the dual vector space of $V$ with pairing $$\la -,-\ra: V \times V^* \to \Rr.$$ Let
\[
C^* \coloneqq \{y \in V^* \mid \la x, y \ra \geq 0 \text{~for all~} x \in C\}
\] be the dual cone of $C$, and ${\rm Int}(C^{*})$ be the relative interior of $C^*$. By $C$ non-degenerate and $\dim C= \dim V$, we still have $\dim  {\rm Int}(C^{*}) = \dim V$.

The group $\Gamma$ naturally acts on $V^*$. In fact, for $\gamma \in \Gamma$ and a $y\in V^*$, $\gamma \cdot y$ is defined by the relation $\la x, \gamma\cdot y \ra = \la \gamma\cdot x, y \ra$ for all $x \in V$. It is straightforward to check that this action gives an injective group homomorphism $\Gamma \hookrightarrow {\rm GL}(V^*)$ which leaves $C^*$ and a lattice in $V^*(\Qq)$ invariant. Therefore, by \cite[Theorem 3.8]{Loo14}, $\Gamma$ acts properly discontinuously  on ${\rm Int}(C^{*})$.

By \cite[Application 4.14]{Loo14}, for each $\xi \in {\rm Int}(C^{*}) \cap V(\Qq)^*$, there is a rational polyhedral cone $\sigma$ associated with $\xi$, such that $\sigma$ is a rational polyhedral fundamental domain for the action of $\Gamma$ on $C_+$ whenever the stabilizer subgroup $\Gamma_\xi = \{1\}$. It suffices to find such $\xi$ to complete the proof. As $\Gamma$ acts properly discontinuously on ${\rm Int}(C^{*})$, for any polyhedral cone $P \subset {\rm Int}(C^{*})$ such that $\dim  P= \dim {\rm Int}(C^{*})=\dim V$, the set $$\{\gamma \in \Gamma \mid \gamma P^\circ \cap P^\circ \neq \emptyset\}$$ is a finite set.
 Then a general $\xi \in P^\circ \cap V^*(\Qq)$ satisfies $\Gamma_{\xi}=\{1\}$.
\end{proof}

The following consequence of having a polyhedral fundamental domain is well-known (see \cite[Corollary 4.15]{Loo14} or \cite[(4.7.7) Proposition]{Mor15})

\begin{theorem}\label{thm: finite presented}
Let $\rho: \Gamma \hookrightarrow {\rm GL}(V)$ be an injective group homomorphism and $C \subset V$ be a cone. Suppose that $C$ is $\Gamma$-invariant. If $C$ admits a polyhedral fundamental domain under the action of $\Gamma$, then $\Gamma$ is finitely presented. 
\end{theorem}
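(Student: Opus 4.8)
The statement to prove is Theorem~\ref{thm: finite presented}: if $\rho\colon \Gamma \hookrightarrow \mathrm{GL}(V)$ is injective and $C \subset V$ is a $\Gamma$-invariant cone admitting a polyhedral fundamental domain $\Pi$ under the $\Gamma$-action, then $\Gamma$ is finitely presented.

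The plan is to run the standard argument that a group acting on a connected topological space (or a connected cell complex) with a "nice" fundamental domain, subject to a finiteness condition on the face-pairings, is finitely presented. Concretely, I would proceed as follows. First, I would set up the combinatorial/topological picture: let $\Pi$ be the polyhedral fundamental domain, so $\Gamma \cdot \Pi = C$ and for each $\gamma \in \Gamma$ either $\gamma\Pi = \Pi$ or $\gamma\Pi \cap \Int(\Pi) = \emptyset$, and since $\Pi$ is a genuine fundamental domain, $\Gamma_\Pi = \{\mathrm{id}\}$. Second, I would identify the \emph{generating set}: let $T \coloneqq \{\gamma \in \Gamma \setminus \{\mathrm{id}\} \mid \gamma\Pi \cap \Pi \neq \emptyset\}$ be the set of elements whose translate of $\Pi$ meets $\Pi$ in a face. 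Because $\Pi$ is a polyhedral cone it has finitely many faces, and properly discontinuous behaviour of the action (which follows from the fundamental-domain property, or can be imported from the setup of Lemma~\ref{le: existence of fun domain}) forces $T$ to be finite. Third, I would argue $T$ generates $\Gamma$: given $\gamma \in \Gamma$, connect an interior point of $\Pi$ to an interior point of $\gamma\Pi$ by a path in $C$ avoiding codimension-$\geq 2$ faces, and track the finite sequence of chambers $\Pi = \gamma_0\Pi, \gamma_1\Pi, \dots, \gamma_k\Pi = \gamma\Pi$ that the path crosses; each consecutive pair shares a facet, so $\gamma_i^{-1}\gamma_{i+1} \in T$, giving $\gamma$ as a word in $T$.

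Fourth — the presentation part — I would show the relations are finitely generated. The natural relator set consists of: (i) the "edge relations" recording, for $s, t \in T$ with $st \in T$, the identity $s \cdot t = (st)$; and (ii) relations coming from codimension-$2$ faces of $\Pi$, i.e. cycles of chambers around a codimension-$2$ face close up. The key input is that $C$, or rather the union $\bigcup_{\gamma}\gamma\Pi = C$ with its face structure, is simply connected in the appropriate sense: $C$ is a convex cone, hence contractible, and the nerve-type argument (Poincaré's polyhedron theorem / the standard "fundamental domain $\Rightarrow$ finite presentation" lemma, exactly as in \cite[Corollary 4.15]{Loo14} or \cite[(4.7.7) Proposition]{Mor15}) shows that $\Gamma$ is the fundamental group of a $2$-complex built from the chamber $\Pi$, its finitely many facets, and its finitely many codimension-$2$ faces — hence finitely presented. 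Since the paper explicitly cites \cite[Corollary 4.15]{Loo14} and \cite[(4.7.7) Proposition]{Mor15}, the cleanest route is to invoke that machinery directly, after verifying its hypotheses in our situation (polyhedral cone $\Pi$ with finitely many faces, the face-pairing set $T$ finite, and $C$ convex hence "geodesically connected and simply connected" in the sense those references require).

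The main obstacle I anticipate is \emph{verifying the hypotheses of the abstract Poincaré-type theorem rather than the theorem itself}: specifically, (a) confirming that $C$ (which need not be open or closed, only $\Gamma$-invariant) is connected and simply connected enough to support the chamber-path argument — convexity of $C$ handles this, but one must be slightly careful that paths can be perturbed off the codimension-$\geq 2$ skeleton while staying inside $C$ and inside the union of chambers; and (b) confirming finiteness of $T$. For (b), if one does not want to re-derive proper discontinuity, one observes that each $\gamma \in T$ maps some face of $\Pi$ onto a face of $\Pi$, there are finitely many such faces, and by injectivity of $\rho$ together with the lattice-invariance (each $\gamma$ lies in $\mathrm{GL}$ of a fixed lattice, so $\Gamma$ is discrete), only finitely many group elements can realize each face-to-face identification; hence $T$ is finite. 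With these two points in place, the finite presentation of $\Gamma$ follows formally from the cited references, and I would write the proof as: set up $\Pi$ and $T$, prove $T$ finite, prove $T$ generates via chamber paths, and then quote \cite[Corollary 4.15]{Loo14}/\cite[(4.7.7) Proposition]{Mor15} for the relations.
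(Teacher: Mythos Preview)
Your proposal is correct, and in fact you do more than the paper itself does: the paper gives no proof of this theorem at all, simply stating that the result ``is well-known (see \cite[Corollary 4.15]{Loo14} or \cite[(4.7.7) Proposition]{Mor15})'' and then moving on. Your outline of the Poincar\'e-polyhedron argument, followed by the same citations, is entirely in the spirit of those references, so your approach is consistent with the paper's.
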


For a possibly degenerate open convex cone $C$, let $W \subset \bar C$ be the maximal $\Rr$-linear vector space. We say that $W$ is defined over $\Qq$ if $W = W(\Qq) \otimes_\Qq \Rr$ where $W(\Qq) = W \cap V(\Qq)$. In this case, $V/W=(V(\Qq)/W(\Qq)) \otimes_\Qq \Rr$ has a natural lattice structure, and we denote everything in $V/W$ by $\ti{(-)}$. For example, $\widetilde{(C_+)}$ is the image of $C_+$ under the projection  $p:V \to V/W$. By the maximality, $W$ is $\Gamma$-invariant, and thus $V/W, \ti C$ admit natural $\Gamma$-actions.

\begin{lemma}\label{le: induced polyhedral type}
Under the above notation and assumptions, 
\begin{enumerate}
\item $\bar{\ti C} = \ti{\bar C}$,
\item $({\ti C})_+ = \widetilde{(C_+)}$, which is denoted by $\ti C_+$, and
\item if $(C_+, \Gamma)$ is of polyhedral type, then $(\ti C_+, \Gamma)$ is still of polyhedral type. More precisely, if $\Pi \subset C_+$ is a polyhedral cone with $\Gamma \cdot \Pi \supset C$, then $\ti \Pi \subset \ti C_+$ and $\Gamma \cdot \ti\Pi \supset \ti C$. 
\end{enumerate}
\end{lemma}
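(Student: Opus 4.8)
The plan is to unwind the definitions of the three assertions one at a time, using only elementary facts about the projection $p: V \to V/W$ and the compatibility of $p$ with the $\Gamma$-action (which holds because $W$ is $\Gamma$-invariant, as noted before the statement). Throughout I write $\ti{S} = p(S)$ for any subset $S \subset V$.

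For (1): Since $p$ is a continuous surjective linear map and $C$ is a cone of full dimension, $p$ is open, so $p(\bar{C})$ is closed: indeed for a linear surjection the image of a closed cone need not be closed in general, so here I would argue more carefully. The inclusion $\widetilde{\bar C} \subset \bar{\ti C}$ is immediate from continuity of $p$. For the reverse, the key point is that $W \subset \bar C$, so $\bar C + W = \bar C$, i.e. $\bar C$ is saturated with respect to $W$; hence $\bar C = p^{-1}(p(\bar C)) = p^{-1}(\widetilde{\bar C})$, and since $p^{-1}(\widetilde{\bar C})$ is closed (as $p$ is continuous) and equals $\bar C$, we get that $\widetilde{\bar C}$ is closed in $V/W$ — here I use that a set $A \subset V/W$ is closed iff $p^{-1}(A)$ is closed, because $p$ is a continuous open surjection. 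Therefore $\bar{\ti C} \subset \widetilde{\bar C}$ as well. Actually it is cleaner to also note $\ti C = p(C)$ is open in $V/W$ (image of an open set under the open map $p$), so $\bar{\ti C}$ is the closure of an open set; combining, $\bar{\ti C} = \widetilde{\bar C}$.

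For (2): By definition $(\ti C)_+ = \Conv\bigl(\bar{\ti C} \cap (V/W)(\Qq)\bigr)$ and $\widetilde{(C_+)} = p\bigl(\Conv(\bar C \cap V(\Qq))\bigr)$. Using (1), $\bar{\ti C} = \widetilde{\bar C}$. The nontrivial point is that $p$ maps $\bar C \cap V(\Qq)$ \emph{onto} $\bar{\ti C} \cap (V/W)(\Qq)$: the inclusion $\subset$ is clear since $p$ is defined over $\Qq$; for $\supset$, given a rational point $\ti v \in \bar{\ti C}$, pick any $v \in \bar C$ with $p(v) = \ti v$, then since $W$ is defined over $\Qq$ and $\bar C + W = \bar C$, I can correct $v$ by a rational element of $W$ to land in $V(\Qq)$ while staying in $\bar C$ — this is where the hypothesis ``$W$ is defined over $\Qq$'' is essential. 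Then $\widetilde{(C_+)} = p(\Conv(\bar C \cap V(\Qq))) = \Conv(p(\bar C \cap V(\Qq))) = \Conv(\bar{\ti C} \cap (V/W)(\Qq)) = (\ti C)_+$, using that $p$ is linear so commutes with taking convex hulls.

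For (3): Suppose $\Pi \subset C_+$ is a polyhedral cone with $\Gamma \cdot \Pi \supset C$. Then $\ti\Pi = p(\Pi)$ is a polyhedral cone (linear image of a polyhedral cone is polyhedral), and $\ti\Pi \subset p(C_+) = \ti C_+$ by (2). Applying $p$ to $\Gamma \cdot \Pi \supset C$ and using $p(\gamma x) = \gamma\, p(x)$ gives $\Gamma \cdot \ti\Pi = p(\Gamma \cdot \Pi) \supset p(C) = \ti C$, so $(\ti C_+, \Gamma)$ is of polyhedral type. The main obstacle I anticipate is purely the point-set-topology bookkeeping in (1)–(2): making precise that $p$ is an open map and that the relevant cones/linear spaces are saturated under $W$, and in (2) the rational-lifting argument which genuinely uses that $W$ is defined over $\Qq$ rather than being merely a real subspace. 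Once those are in hand, (3) is a one-line diagram chase.
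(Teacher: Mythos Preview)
Your proof is correct and follows essentially the same strategy as the paper's; the only stylistic difference is in (1), where the paper picks a linear splitting $j: V/W \to V$ and runs a sequential argument, whereas your saturation/quotient-map argument ($\bar C + W = \bar C \Rightarrow p^{-1}(\widetilde{\bar C}) = \bar C$ closed $\Rightarrow \widetilde{\bar C}$ closed) is a cleaner packaging of the same idea. One small correction in your step (2): you cannot in general adjust $v$ by a \emph{rational} element of $W$ to land in $V(\Qq)$; instead, choose any rational lift $v_0 \in V(\Qq)$ of $\ti v$ (possible since $p$ is defined over $\Qq$), and then observe $v_0 \in v + W \subset \bar C + W = \bar C$ --- this is exactly what the paper does.
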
 

\begin{proof}
For (1), $\bar{\ti C} \supset \ti{\bar C}$ trivially holds. The converse does not hold for an arbitrary linear projection (see \cite[Remark 2.5]{Loo14}). In our case, let $j: V/W \to V$ be a splitting of $p: V \to V/W$. For $x \in \bar{\ti C}$, let $\ti a_n \to x$ with $a_n \in C$. Then $a_n - j(\ti a_n) \in W$ as $p(a_n - j(\ti a_n))=0$. By $W \subset \bar C$, $j(\ti a_n) \in \bar C$. Moreover, as $\{\ti a_n\}_{n\in \Nn}$ converges and $j$ is continuous, $\{j(\ti a_n)\}_{n\in \Nn}$ converges to $\alpha \in \bar C$. Thus $\ti a_n = p(j(\ti a_n)) \to p(\alpha)$. Hence $x = p(\alpha) \in \ti{\bar C}$.

For (2), as $W$ is defined over $\Qq$, $V/W$ has the natural $\Qq$-structure $V/W(\Qq)\coloneqq V(\Qq)/W(\Qq)$. We first show 
\[
\ti{\bar C} \cap (V/W)(\Qq) = \widetilde{(\bar C \cap V(\Qq))}.
\] The $``\supset"$ follows from definition. For the converse, let $\ti a \in \ti{\bar C}$ be a rational point in $V/W$. Then by $W \subset \bar C$, we can assume that $a \in \bar C$ is a rational point in $V$. This gives $``\subset"$. Next, we show
\[
\Conv\left(\widetilde{\bar C \cap V(\Qq)}\right) = {\widetilde\Conv(\bar C \cap V(\Qq))}.
\] As the image of a convex set is still convex, we have $``\subset"$. 

For a set $S \subset V$, we have
\[
\Conv(S) = \{\sum_{i \in I} \lambda_i s_i \mid \sum_{i\in I} \lambda_i =1, \lambda_i >0, |I|<\infty, s_i \in S\}.
\] For $a \in {\Conv(\bar C \cap V(\Qq))}$, take finitely many $s_i \in \bar C \cap V(\Qq)$, and $\lambda_i >0, \sum_{i\in I} \lambda_i =1$, so that $a=\sum \lambda_i s_i$. Thus $\ti a = \sum \lambda_i \ti s_i \in \Conv\left(\widetilde{\bar C \cap V(\Qq)}\right) $. This shows the converse inclusion. 

Finally, by (1), 
\[
({\ti C})_+ = \Conv(\bar{\ti C} \cap (V/W)(\Qq)) =  \Conv(\ti{\bar C} \cap (V/W)(\Qq)). 
\] Then (2) follows from
\[
\Conv(\ti{\bar C} \cap (V/W)(\Qq)) = \Conv\left(\widetilde{\bar C \cap V(\Qq)}\right) = {\widetilde\Conv(\bar C \cap V(\Qq))} = \widetilde{(C_+)}.
\] 

For (3),  let $\Pi \subset C_+$ be a polyhedral cone such that $\Gamma \cdot \Pi \supset C$. By (2), $\ti \Pi \subset \ti C_+$. Moreover, $\Gamma \cdot \ti\Pi \supset \ti C$. 
\end{proof} 

The following proposition generalizes \cite[Theorem 3.8 and Application 4.14]{Loo14} (see Lemma \ref{le: existence of fun domain}) to the degenerate case.

\begin{proposition}\label{prop: degenerate cone}
Let $(C_+, \Gamma)$ be of polyhedral type. Let $W \subset \bar C$ be the maximal vector space. Suppose that $W$ is defined over $\Qq$. Then there is a rational polyhedral cone $\Pi \subset C_+$ such that $\Gamma \cdot \Pi = C_+$, and for each $\gamma \in \Gamma$, either $\gamma \Pi\cap \Int(\Pi) = \emptyset$ or $\gamma \Pi = \Pi$. Moreover, 
\[
\{\gamma \in \Gamma \mid \gamma \Pi = \Pi\} = \{\gamma \in \Gamma \mid \gamma \text{~acts trivially on ~}V/W\}.
\]
\end{proposition}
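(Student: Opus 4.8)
The plan is to descend everything to the non-degenerate quotient $V/W$, apply Lemma~\ref{le: existence of fun domain} there, and pull the resulting fundamental domain back along the projection $p\colon V\to V/W$.

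First I would set up the quotient data. Since $W$ is $\Gamma$-invariant, $\Gamma$ acts on $V/W$; let $N\coloneqq\{\gamma\in\Gamma\mid\gamma\text{ acts trivially on }V/W\}$ be the kernel of this action and $\bar\Gamma\coloneqq\Gamma/N$, so that $\bar\Gamma\hookrightarrow{\rm GL}(V/W)$ acts faithfully on $V/W$, leaving $\ti C$ and the natural lattice of $V/W$ invariant. By the maximality of $W$ and Lemma~\ref{le: induced polyhedral type}(1), $\ti C$ is non-degenerate (any subspace of $\bar{\ti C}=\ti{\bar C}$ pulls back to a subspace of $\bar C$ containing $W$, using that $\bar C$ is closed under addition), and $\dim\ti C=\dim V-\dim W=\dim(V/W)$. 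By Lemma~\ref{le: induced polyhedral type}(3), and since the $\Gamma$-action on $V/W$ factors through $\bar\Gamma$, the pair $(\ti C_+,\bar\Gamma)$ is of polyhedral type. Hence Lemma~\ref{le: existence of fun domain} applies and produces a rational polyhedral fundamental domain $\bar\Pi\subset\ti C_+$ for $\bar\Gamma$: we have $\bar\Gamma\cdot\bar\Pi=\ti C_+$; for each $\bar\gamma\in\bar\Gamma$ either $\bar\gamma\bar\Pi=\bar\Pi$ or $\bar\gamma\bar\Pi\cap\Int(\bar\Pi)=\emptyset$; and $\{\bar\gamma\in\bar\Gamma\mid\bar\gamma\bar\Pi=\bar\Pi\}=\{1\}$. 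Note $\bar\Pi$ is full-dimensional in $V/W$, since its countably many $\bar\Gamma$-translates cover the full-dimensional cone $\ti C_+$.

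The key technical step is the saturation identity $C_+=p^{-1}(\ti C_+)$, equivalently $C_++W=C_+$. First I would note $W\subset C_+$: the rational points $W(\Qq)$ lie in $\bar C\cap V(\Qq)\subset C_+$, and since $W$ is defined over $\Qq$, every $w\in W$ lies in some simplex with rational vertices contained in $W$, hence in $\Conv(W(\Qq))\subset C_+$. As $C_+$ is a convex cone it is closed under addition, so $C_++W\subset C_++C_+\subset C_+$, while the reverse inclusion is trivial; thus $p^{-1}(p(C_+))=C_++W=C_+$, that is, $C_+=p^{-1}(\ti C_+)$. Now set $\Pi\coloneqq p^{-1}(\bar\Pi)$. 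Since $p$ is a surjective $\Qq$-linear map and $\bar\Pi$ is a rational polyhedral cone, $\Pi$ is a rational polyhedral cone, and $\Pi\subset p^{-1}(\ti C_+)=C_+$. Because $p$ is open and $\bar\Pi$, $\Pi$ are full-dimensional, $\Int(\Pi)=p^{-1}(\Int(\bar\Pi))$; and because $W$ is $\Gamma$-invariant, $p(\gamma x)=\bar\gamma\,p(x)$, so $\gamma\Pi=p^{-1}(\bar\gamma\bar\Pi)$ for every $\gamma\in\Gamma$ with image $\bar\gamma\in\bar\Gamma$.

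The three assertions then follow by pulling back through $p$. For $\Gamma\cdot\Pi=C_+$: the inclusion $\Gamma\cdot\Pi\subset\Gamma\cdot C_+=C_+$ is clear, while for $x\in C_+$ we have $p(x)\in\ti C_+=\bar\Gamma\cdot\bar\Pi$, so $p(\gamma^{-1}x)\in\bar\Pi$ for a suitable $\gamma$, i.e. $x\in\gamma\Pi$. For the weak fundamental-domain property: if $\bar\gamma\bar\Pi=\bar\Pi$ then $\gamma\Pi=p^{-1}(\bar\gamma\bar\Pi)=\Pi$; otherwise $\bar\gamma\bar\Pi\cap\Int(\bar\Pi)=\emptyset$, so $\gamma\Pi\cap\Int(\Pi)=p^{-1}\bigl(\bar\gamma\bar\Pi\cap\Int(\bar\Pi)\bigr)=\emptyset$. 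Finally, if $\gamma\in N$ then $\bar\gamma=1$ and $\gamma\Pi=\Pi$; conversely, if $\gamma\Pi=\Pi$ then applying $p$ gives $\bar\gamma\bar\Pi=\bar\Pi$, whence $\bar\gamma=1$ in $\bar\Gamma$, i.e. $\gamma\in N$. Thus $\{\gamma\in\Gamma\mid\gamma\Pi=\Pi\}=N$, which is exactly the set of $\gamma$ acting trivially on $V/W$. The only genuine subtlety---the main obstacle---is the saturation identity $C_+=p^{-1}(\ti C_+)$, that is, that $C_+$ is a union of $W$-cosets; everything else is formal bookkeeping with $p$. A secondary point requiring care is that the $\Gamma$-action on $V/W$ need not be faithful, which is why one must pass to $\bar\Gamma=\Gamma/N$ before invoking Lemma~\ref{le: existence of fun domain}; this costs nothing, since the $\Gamma$-action on $\ti C_+$ factors through $\bar\Gamma$.
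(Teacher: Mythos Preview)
Your proof is correct and follows essentially the same route as the paper: pass to the non-degenerate quotient $V/W$, apply Lemma~\ref{le: existence of fun domain} to obtain a rational polyhedral fundamental domain $\bar\Pi$ for the image of $\Gamma$ in ${\rm GL}(V/W)$, and pull it back. Your $\Pi=p^{-1}(\bar\Pi)$ coincides with the paper's $\Pi'+W$ (where $\Pi'\subset C_+$ is any rational polyhedral lift of $\bar\Pi$), and you are more explicit than the paper about the saturation identity $C_++W=C_+$, which the paper tacitly uses when asserting $\Gamma\cdot\Pi=C_+$ and $\Pi\subset C_+$.
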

\begin{proof}
By Lemma \ref{le: induced polyhedral type} (3), $(\ti C_+, \Gamma)$ is still of polyhedral type. By Lemma \ref{le: existence of fun domain}, there is a rational polyhedral cone $\ti \Pi$ as a fundamental domain of $\ti C_+$ under the action of $\ti \rho(\Gamma)$, where $\ti\rho: \Gamma \to {\rm GL}(V/W)$ is the natural group homomorphism. By Lemma \ref{le: induced polyhedral type} (2), let $\Pi' \subset C_+$ be a  rational polyhedral cone such that $p(\Pi')=\ti\Pi$, where $p: V \to V/W$. Let $\Pi \coloneqq  \Pi'+W$ which is a rational polyhedral cone. As $\gamma (\Pi'+W) = (\gamma \Pi')+W$, by Lemma \ref{le: induced polyhedral type} (2), we have $\Gamma \cdot \Pi = C_+$. 

If $\gamma \ti\Pi \cap \Int(\ti\Pi) = \emptyset$, then $\gamma \Pi \cap \Int(\Pi) = \emptyset$ as $\Int(\Pi)$ maps to $\Int(\ti\Pi)$. If $\gamma \ti\Pi =\ti\Pi$, then we claim that $\gamma \Pi = \Pi$. In fact, for some $a\in \Pi'$, we have $\widetilde{(\gamma  \cdot  a)}=\gamma  \cdot \ti a \in \ti \Pi$ and thus $\gamma  \cdot  a = b+w$ for some $b\in \Pi', w\in W$. Thus $\gamma  \Pi \subset \Pi$. Similarly, $\gamma^{-1}  \Pi \subset \Pi$. This shows the claim. Moreover, $\gamma \Pi = \Pi$ iff $\gamma$ acts trivially on $\ti \Pi$ iff $\gamma$ acts trivially on $V/W$ because $\ti \Pi$ is a fundamental domain under the action of $\ti\rho(\Gamma)$.
\end{proof}

\section{Generic properties of fibrations and structures of cones}\label{sec: Generic properties of fibrations and structures of cones}

Let $(X, \De) \to S$ be a fiber space. Let $K\coloneqq K(S)$ be the field of rational functions on $S$ and $\bar K$ be the algebraic closure of $K$. Then $X_{\bar K}\coloneqq X \times_S \spec\bar K$ is the geometric fiber of $f$. Set $\De_{\bar K} \coloneqq \De \times_S \spec\bar K$.

\begin{proposition}\label{prop: Generic property 0}
Let $f: X \to S$ be a fibration.

\item \begin{enumerate}
\item If $(X, \De)$ has klt singularities, then $(X_{\bar K}, \De_{\bar K})$ still has klt singularities. Moroever, if $f: (X, \De) \to S$ is a klt Calabi-Yau fiber space, then $(X_{\bar K}, \De_{\bar K})$ is a klt Calabi-Yau pair over $\spec \bar K$.

\item For a finite base change $h: T \to S$ between varieties, let $U \subset S$ be a non-empty open set and $V=h^{-1}(U)$. Then we can shrink $U$ such that $X_V \coloneqq X \times_S V$ satisfies the following properties.

If $(X, \De)$ has klt singularities, then $(X_V, \De_V)$ still has klt singularities, where $\De_V \coloneqq \De \times_S V$. Moreover, if $f: (X, \De) \to S$ is a klt Calabi-Yau fiber space, then $(X_V, \De_V)$ has klt singularities and $K_{X_V}+\De_{V} \sim_\Rr 0/V$.

\end{enumerate}
\end{proposition}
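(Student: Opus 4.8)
The plan is to reduce both statements to standard facts about discrepancies under base change, which are essentially automatic once we pass to a log resolution and spread everything out over a suitable open subset of the base. For part (1), first I would take a log resolution $g\colon (Y, \De_Y) \to (X, \De)$ where $K_Y + \De_Y = g^*(K_X+\De) + \sum a_i E_i$ with all $a_i > 0$ (using $\De \ge 0$ and klt, so the $E_i$ include the strict transform of $\lceil\De\rceil$-type components with the appropriate coefficients; more precisely write $K_Y = g^*(K_X+\De) + \sum (a(E_i;X,\De)-1)E_i$ over the locus where $\De$ is supported as well). Base-changing to $\spec \bar K$ is flat, and $Y_{\bar K} \to X_{\bar K}$ remains a proper birational morphism; since $\bar K$ has characteristic $0$, $Y_{\bar K}$ is smooth (geometric generic fiber of a variety over a characteristic-$0$ field is smooth over $\bar K$ — here one uses that $Y \to S$ is generically smooth, which holds after possibly noting we only need smoothness of the generic fiber, automatic in char $0$ by generic smoothness). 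The key point is that the coefficients $a(E_i; X,\De)$ are preserved: $K_{Y_{\bar K}} + \De_{Y,\bar K} = g_{\bar K}^*(K_{X_{\bar K}} + \De_{\bar K}) + \sum a_i E_{i,\bar K}$, because formation of $K_{Y/X}$ and of pullbacks of $\Rr$-Cartier divisors commutes with the flat base change $\spec \bar K \to \spec K \to S$ (and $K_{X_{\bar K}}$ is indeed the restriction of $K_X$, as the canonical sheaf commutes with flat base change for the relatively Gorenstein-in-codimension-one situation, or more simply because $X \to S$ is smooth in a neighborhood of the generic points of $X_{\bar K}$ and $\De_{\bar K}$). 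It then suffices to check that the $E_{i,\bar K}$ remain prime, or rather that even if they break into components the discrepancies of all exceptional divisors over $X_{\bar K}$ are still $>0$; since every divisor over $X_{\bar K}$ dominates a divisor over $X$ after spreading out, and log discrepancy can only increase under such specialization arguments — actually the cleanest route is: a single log resolution computes klt-ness, and $Y_{\bar K} \to X_{\bar K}$ is a log resolution (smooth $Y_{\bar K}$, snc exceptional locus, as snc is preserved by the flat base change over a point of the base where everything is snc relative to $S$) with all coefficients $a_i > 0$, hence $(X_{\bar K}, \De_{\bar K})$ is klt. The Calabi-Yau addendum is then immediate: $K_X + \De \sim_\Rr 0/S$ means $K_X + \De = \sum r_j \mathrm{div}(\phi_j) + f^* D$ for $r_j \in \Rr$, $\phi_j \in K(X)$, $D$ an $\Rr$-Cartier divisor on $S$; restricting to $X_{\bar K}$ kills $f^*D$ (it becomes a principal-over-$\bar K$, in fact trivial, divisor since $D$ pulls back from $\spec \bar K$) and keeps the $\mathrm{div}(\phi_j|_{X_{\bar K}})$ well-defined, giving $K_{X_{\bar K}} + \De_{\bar K} \sim_\Rr 0$. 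And $\Qq$-factoriality / the pair being a genuine pair (with $K+\De$ $\Rr$-Cartier) descends since Cartier-ness is a codimension-issue preserved by flat base change.

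For part (2), the plan is essentially the same spreading-out argument made finite-level. Given the finite base change $h\colon T \to S$, choose the log resolution $g\colon Y \to X$ as above over $\Cc$; now $g \times_S V \colon Y_V \to X_V$ is again proper birational, and for $U$ (hence $V$) small enough it is a log resolution: the locus in $Y$ where $Y \to S$ fails to be smooth, or where the exceptional-plus-$\De_Y$ divisor fails to be relatively snc over $S$, is a proper closed subset, so it misses $f_Y^{-1}$ of some dense open $U_0 \subset S$, and we shrink $U \subset U_0$; since $h$ is finite, $V = h^{-1}(U)$ maps into $U_0$ and $Y_V \to V$ is smooth with relatively snc divisor, hence $Y_V$ is smooth (as $V$ is, being open in the normal — or at least we may also shrink so $V$ is smooth, or just take $T$ normal; actually we only need $Y_V$ normal with the discrepancy computation valid, and smoothness of $Y_V$ follows from smoothness of $Y_V \to V$ composed with smoothness of $V$ after a further shrink) and $g_V$ is a log resolution of $(X_V, \De_V)$. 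The discrepancy identity $K_{Y_V} + \De_{Y,V} = g_V^*(K_{X_V}+\De_V) + \sum a_i E_{i,V}$ holds by flat (indeed the base change $X_V \to X$ is flat since it is a base change of the flat — after shrinking, smooth — map $V \to U \subset S$... more carefully $X_V = X_U \times_U V$ and $X_U \to U$ we can assume smooth near the relevant points, or just invoke that $X_V \to X_U$ is flat as base change of the finite flat, or flat after shrinking, $V \to U$) base change, and all $a_i$ remain $>0$, so $(X_V, \De_V)$ is klt. For the Calabi-Yau addendum, write again $K_X + \De \sim_\Rr 0/S$ via $\sum r_j \mathrm{div}(\phi_j) + f^*D$; pulling back along $X_V \to X$ gives $K_{X_V} + \De_V \sim_\Rr (f_V)^* (h^* D|_V)$, and since we only claim $\sim_\Rr 0/V$ the term $(f_V)^*(h^*D|_V)$ is pulled back from $V$, hence $\equiv 0/V$ and in fact $\sim_\Rr 0/V$ by definition of relative $\Rr$-linear equivalence. (One must be slightly careful that $K_{X_V}$ is the pullback of $K_X$: true on the smooth-over-$V$ locus, which is all of $X_V$ in codimension one after shrinking, which suffices for $\Rr$-Weil-divisor-class statements.)

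The main obstacle — really the only non-formal point — is making precise that the log resolution $g\colon Y \to X$ over $\Cc$ base-changes to a log resolution of the geometric (resp. the $V$-)fiber, i.e. that smoothness of $Y$ and relative snc-ness of $\mathrm{Exc}(g) + \De_Y$ persist after restricting to an open subset of $S$ (for (2)) or to $\spec\bar K$ (for (1)), together with the bookkeeping that the exceptional divisors $E_i$, even if they become reducible after base change, do not introduce new divisors of non-positive log discrepancy. This is handled by the standard generic-flatness / spreading-out package: all the conditions in question (smoothness of $Y \to S$ near the generic fiber, snc of the total divisor relative to $S$, and that $X \to S$ itself is smooth near the generic points of $\De$ and of $Y$'s image) are open on the source and hold over a dense open $U \subset S$; over such $U$ a log resolution restricts to a log resolution. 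I would keep this paragraph short, cite generic smoothness and generic flatness, and note that no minimal model theory is needed for the proposition itself — it is purely a base-change statement about discrepancies and $\Rr$-linear equivalence.
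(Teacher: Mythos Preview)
Your approach is essentially the same as the paper's: pull back a log resolution along the base change and observe that the discrepancy formula is preserved, then read off klt-ness. The paper differs mainly in what it makes explicit. First, it carefully establishes that $X_{\bar K}$ and $X_V$ are normal and irreducible before discussing discrepancies; you gloss over this, but the very notion of a klt pair requires a normal underlying variety, so it deserves a sentence (normal over a characteristic-zero field implies geometrically normal, and for (2) one shrinks so that $V\to U$ is \'etale, whence $X_V\to X_U$ is \'etale and normality is inherited). Second, for part (2) the paper uses this \'etale reduction to get $K_{X_V}+\De_V=\phi^*(K_X+\De)$ directly, which makes the discrepancy comparison a one-liner; your route via ``$Y\to S$ smooth over a dense open, hence $Y_V$ smooth'' reaches the same conclusion but requires a bit more bookkeeping. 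Neither difference is a genuine gap; your plan would go through once you insert the normality check.
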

\begin{proof}
For (1), we first show that $X_{\bar K}$ is normal. This is a local statement for both source and target, hence we can assume that $f: \spec A \to \spec B$. The collection of affine open sets $\{\spec B_i \subset \spec B\mid i\}$ forms a direct system such that $\varinjlim B_i = K$. Then $A \otimes_B \varinjlim B_i= \varinjlim (A \otimes_B  B_i)$. As $A \otimes_B  B_i$ is normal, by \cite[\href{https://stacks.math.columbia.edu/tag/037D}{Lemma 037D}]{Sta22}, $A \otimes_B K$ is also normal. Then $X_{\bar K} = X_{K} \otimes_{K} \bar K$ is normal by \cite[\href{https://stacks.math.columbia.edu/tag/0C3M}{Lemma 0C3M}]{Sta22}. Let $v: \spec \bar K \to S, u: X_{\bar K} \to X$ and $\bar f: X_{\bar K} \to \spec \bar K$ be natural morphisms. Then $\bar f_*(u^*\Oo_X)=v^*(f_*\Oo_X)$. By $u^*\Oo_X=\Oo_{X_{\bar K}}$ and $f_*\Oo_{X}=\Oo_S$, we see that $X_{\bar K}$ is connected. Hence $X_{\bar K}$ is an irreducible normal variety over $\bar K$.

Next, we show that $X_{\bar K}$ has klt singularities. Let $X_{\reg}$ be the smooth part of $X$. Shrinking $S$, we can assume that $S$ is smooth and $X_{\reg} \to S$ is a smooth morphism. Then the sequence 
\[
0\to f^*\Omega_S \to \Omega_{X_{\reg}} \to \Omega_{X_{\reg}/S} \to 0
\] is exact. Let $r= \dim(X/S)$. By
\[
(\Omega_{X_{\reg}/S})_{\bar K} = \Omega_{(X_{\reg})_{\bar K}} \text{~and~} \Oo_X(K_{X_{\reg}/S}) = \wedge^r \Omega_{X_{\reg}/S},
\]we have
\[
(K_{X_{\reg}/S})_{\bar K} \sim K_{(X_{\reg})_{\bar K}}.
\] By $\codim(X\backslash X_{\rm reg}) \geq 2$, we have
\[
(K_{X/S})_{\bar K} - K_{X_{\bar K}}\sim 0.
\]Take a log resolution $g: \ti X \to X$, then
\[
K_{\ti X/S}+\ti \De = g^*(K_{X/S}+\De) 
\] with coefficients of $\ti \De<1$. The natural morphism $\bar g: \ti X_{\bar K} \to X_{\bar K}$ is also a log resolution and the above argument implies that 
\[
K_{\ti X_{\bar K}} +\ti \De_{\bar K}= \bar g^*(K_{X_{\bar K}}+\De_{\bar K}).
\] As coefficients of $\ti \De_{\bar K}<1$, $(X_{\bar K}, \De_{\bar K})$ has klt singularities. 

When $f: (X, \De) \to S$ is a klt Calabi-Yau fiber space. We only need to note that $K_{X}+\De \sim_\Rr 0/S$ implies that $K_{X_{\bar K}}+\De_{\bar K} \sim_{\Rr} 0$.

For (2), shrinking $U$, we can assume that $V \to U$ is \'etale. We first show that $X_V$ is normal. Note that $\phi: X_V \to X_U$ is also \'etale, where $X_U \coloneqq X \times_S U$. Let $x\in X_V$ be a point (not necessarily a closed point) and $y=\phi(x)$. Set $\Oo_{x}\coloneqq\Oo_{X_V,x}$ (resp. $\Oo_{y}\coloneqq\Oo_{X_U,y}$). Let $\hat \Oo_x$ (resp. $\hat\Oo_y$) be the completion with respect to the maximal ideal. By \cite[III, Exercise 10.4]{Har77},
\[
\hat\Oo_{y}\otimes _{k(y)}k(x) \simeq \hat\Oo_{x},
\] where $k(y)\subset \hat\Oo_{y}$ and $k(x) \subset \hat\Oo_{x}$ are fields of representatives. Note that $k(y)$ and $k(x)$ are of characteristic zero. We claim that  $\hat\Oo_{x}$ is normal. In fact, as $X$ has klt singularities, $X$ is Cohen-Macaulay. Hence $\hat\Oo_{y}$ is Cohen-Macaulay by \cite[\href{https://stacks.math.columbia.edu/tag/07NX}{Lemma 07NX}]{Sta22}. In particular, it satisfies Serre's condition $S_2$. As $\hat\Oo_{y}$ is certainly regular in codimension $1$, it is normal. Then \cite[\href{https://stacks.math.columbia.edu/tag/0C3M}{Lemma 0C3M}]{Sta22} shows that $\hat\Oo_{y}\otimes _{k(y)}k(x)$ is normal. Thus $\Oo_{x}$ is normal by \cite[\href{https://stacks.math.columbia.edu/tag/0FIZ}{Lemma 0FIZ}]{Sta22}. This shows that $X_V$ is normal.  

Let $f_V: X_V \to V$ be the natural map. By $V \to U$ flat and $f_*\Oo_X=\Oo_S$, we have $(f_V)_*\Oo_{X_V} =\Oo_V$. This implies $H^0(X_V, \Oo_{X_V})=H^0(V,\Oo_V)$ is an integral domain. This shows that $X_V$ is an irreducible normal variety. 

Let $\pi: W \to X$ be a log resolution of $(X, \De)$ with natural morphisms $\pi_V: W_V \to X_V$ and $\ti \phi: W_V \to W_U$. Set 
\[
K_W+\ti\De\coloneqq\pi^*(K_X+\De) \text{~and~} K_{W_V}+D\coloneqq\pi_V^*(K_{X_V}+\De_V).
\] As $\phi$ is \'etale, we have 
\[
K_{X_V}+\De_{V}=\phi^*(K_{X_U}+\De_U).
\] Therefore, we have $K_{W_V}+D=\ti \phi^*(K_{W_U}+\ti\De_U)$, where $\ti \De_U \coloneqq \ti\De \times_S U$. By $(X, \De)$ klt, coefficients of $\ti\De$ are $<1$. As $\ti \phi$ is \'etale, we have $D=\ti \phi^*\ti\De_{U}$, and thus the coefficients of $D$ are $<1$. As $W_V \to X_V$ is a log resolution of $(X_V, \De_V)$, $(X_V, \De_V)$ is still klt. 

When $f: (X, \De) \to S$ is a klt Calabi-Yau fiber space, then $K_X+\De \sim_\Rr 0/S$ implies that $K_{X_V}+\De_V=\phi^*(K_X+\De) \sim_\Rr 0/V$.
\end{proof}

In the sequel, we will use the following spreading-out and specialization techniques, whose spirit is well known (see, for example, \cite[Chapter 3.2]{Poo17}). We include a sketch of the proof of the specific statement below.

\begin{lemma}\label{lem: spread out and specialization}
Suppose that $X \to S$ is a morphism between varieties. Let $K = K(S)$ be the field of rational functions on $S$, and let $\bar K$ be the algebraic closure of $K$. If 
\[
\bar g \colon \bar Y \to X_{\bar K}
\] 
is a morphism of varieties over $\bar K$, and $\bar M$ is a coherent sheaf on $\bar Y$, then, after shrinking $S$, there exist a finite \'etale Galois base change $T \to S$, a variety $Y/T$, a morphism 
\[
g \colon Y \to X_T/T,
\] 
and a coherent sheaf $M$ on $Y$ such that $\bar Y \simeq Y_{\bar K}$, $\bar g = g_{\bar K}$, and $\bar M \simeq M_{\bar K}$.
\end{lemma}
\begin{proof}[Sketch of the Proof]
Note that the definitions of the morphism $\bar g$ and the sheaf $\bar M$ involve only finitely many polynomials whose coefficients lie in $\bar K$. Shrinking $S$, there exists a finite \'etale morphism $T \to S$ such that those coefficients become regular functions on $T$. Replacing $T$ by its Galois closure and using the same defining polynomials, we obtain the desired variety $Y/T$, the morphism $g: Y \to X_T/T$, and the coherent sheaf $M$ on $Y$.
\end{proof}

\begin{proposition}\label{prop: Generic property}
Let $f: X \to S$ be a fibration with $X$ a $\Qq$-factorial variety. 
\begin{enumerate}
\item There exist natural maps
\[
\begin{split}
\iota_{\bar K}: &N^1(X/S)_\Rr \to N^1(X_{\bar K})_\Rr, \quad [D] \mapsto [D_{\bar K}],\\
\iota_K: &N^1(X/S)_\Rr \to N^1(X_{K})_\Rr, \quad [D] \mapsto [D_{K}].
\end{split}
\] Moreover, $\iota_K$ is a surjective map.
\item For any sufficiently small open set $U \subset S$, there exists a natural inclusion
\[
N^1(X_U/U)_\Rr \hookrightarrow N^1(X_{\bar K})_\Rr, \quad [D] \mapsto [D_{\bar K}].
\]
\end{enumerate}
\end{proposition}
\begin{proof}
For (1), we show that the natural map
\[
\iota_{\bar K}: N^1(X/S)_\Rr \to N^1(X_{\bar K})_\Rr, \quad [D] \mapsto [D_{\bar K}],
\] is well-defined.

Since $\{D \in \Pic(X/S)_\Rr \mid D \equiv 0/S\}$ is defined over $\Qq$, we only need to show that if a Cartier divisor $D \equiv 0/S$, then $D_{\bar K} \equiv 0$. Replacing $X$ by a resolution $\ti X \to X$ and $D, D_{\bar K}$ by their pullbacks on  $\ti X, \ti X_{\bar K}$ respectively, we can assume that $X$ is smooth.

Let $C_{\bar K} \to X_{\bar K}$ be a smooth curve, we will show $D_{\bar K}\cdot C_{\bar K}=0$. By definition, this is to show that the coefficient of $m$ in the polynomial $\chi(C_{\bar K}, mD_{\bar K})$ is $0$. Shrinking $S$, let $C$ be a spreading out of $C_{\bar K}$ over a variety $T$ such that $h: T \to S$ is a finite \'etale morphism (see Lemma \ref{lem: spread out and specialization}). We can assume that $S$ is smooth and $C$ is smooth over $T$. By Proposition \ref{prop: Generic property 0} (2), we can assume that $X_T$ is normal. Shrinking $T$ further, we may assume that $T = \spec A$ is affine. Moreover, as $\spec \bar K \to T$ is flat, \cite[III Prop 9.3]{Har77} implies that
\[
H^i(C, mD_T) \otimes_A \bar{K} \simeq H^i(C_{\bar K}, mD_{\bar K}).
\] Thus 
\[
\chi(C_{\bar K}, mD_{\bar K}) = \sum (-1)^k \dim_{\bar{K}}H^i(C, mD_T) \otimes_A \bar{K}. 
\] Shrinking $T$, by \cite[III Prop 12.9]{Har77}, we have
\[
H^i(C, mD_{T}) \otimes_A k(t) \simeq H^i({C}_t, mD_t),
\] where $t\in T$ is a closed point. \cite[III Prop 12.9]{Har77} also implies that $H^i(C, mD_T)$ is a free $A$-module. Thus
\[
\begin{split}
\dim H^i(C, mD_T) \otimes_A \bar{K} &= \dim H^i(C, mD_T) \otimes_A k(t) \\
&= \dim H^i({C}_t, mD_t).
\end{split}
\] 

Let $\phi: X_T \to X$ be the natural morphism. Then $D_T \cdot C_t = \phi^*D \cdot C_t = D \cdot \phi_*C_t=0$. Therefore, the coefficient of $m$ in
\[
\chi(C_{\bar K}, mD_{\bar K}) =\chi({C}_t, mD_t)
\] is $0$. This shows that $D_{\bar K} \equiv 0$.

Next, the map $\iota_K$ can be handled by a similar argument. It is surjective because, for any Cartier divisor $D$ on $X_K$, we can take its closure $\bar D$ in $X$. Since $X$ is $\Qq$-factorial, we have $\iota_K(\bar D) = D$.

For (2), in order to get the desired inclusion for any sufficiently small open set, it suffices to find one such open set. We proceed with the argument in several steps.

\noindent Step 1. Suppose that $D_{\bar K}\equiv 0$, we want to find $U$ such that $D\equiv 0/U$ (this $U$ may depend on $D$). Let $\ti X \to X$ be a resolution, and $\ti D$ be the pullback of $D$. We have $\ti D_{\bar K} \equiv 0$ on $\ti X_{\bar K}$. If $\ti D \equiv 0/U$, then $D \equiv 0/U$. Therefore, we can assume that $X$ is smooth. 

By \cite[Theorem 9.6.3 (a) and (b)]{KLe05}, there exists an $m\in\Zz_{>0}$ such that $mD_{\bar K}$ is algebraically equivalent to $0$. That is, there exist connected $\bar{K}$-schemes of finite type $\bar B_i, 1 \leq i \leq n$, invertible sheaves $\bar M_i$ on $X_{\bar B_i}$ and closed points $s_i,t_i$ of $\bar B_i$ such that
\begin{equation}\label{eq: over bar k}
\Oo(mD_{\bar K}) \simeq \bar M_{1,s_1}, ~\bar M_{1,t_1} \simeq \bar M_{2,s_2}, ~\cdots, ~\bar M_{n-1,t_{n-1}} \simeq \bar M_{n,s_n}, ~\bar M_{n,t_n}\simeq \Oo_{X_{\bar K}}
\end{equation} (see \cite[Definition 9.5.9]{KLe05}). Moreover, connecting $s_i, t_i$ by the image of a smooth curve, we can further assume that $\bar B_i$ is a smooth curve.

\noindent Step 2. The desired open set $U$ will be obtained by a sequence of shrinkings of $S$.
\[
\xymatrix{
X_{B_i}\ar[d]_{\theta_i} \ar[r] \ar@/^1.5 pc/[rr]^{\tau_i}& X_T \ar[r]^g \ar[d]& X_U \ar[d] \ar@{^{(}->}[r] & X \ar[d]^f\\
B_i \ar[r]&T\ar@/^1 pc/[l]^{\ti s_i} \ar[r]&U \ar@{^{(}->}[r] & S\\}
\]

First, by generic smoothness, after possibly shrinking $S$ to $U$, we may assume that $X_U \to U$ is smooth. By Lemma \ref{lem: spread out and specialization}, there exists a finite morphism $T \to U$ such that all objects and relations mentioned above over $\bar K$ are defined on $X_T \to T$. This means that there exist schemes $B_i$ over $T$ and invertible sheaves $M_i$ on $X_{B_i}$ such that we have natural isomorphisms $(B_i)_{\bar K} \simeq \bar B_i$ and $(M_i)_{\bar K} \simeq \bar M_i$. Moreover, there exist sections $\ti s_i, \ti t_i: T \to B_i$ which are spreading out of $s_i, t_i$. The spreading out of \eqref{eq: over bar k} becomes 
\[
\begin{split}
\Oo(mD)_{\ti s_1(T)} \simeq M_{1,\ti s_1(T)}, ~&M_{1,\ti t_1(T)} \simeq M_{2,\ti s_2(T)},~ \cdots\\
\cdots, ~&M_{n-1,\ti t_{n-1}(T)} \simeq M_{n,\ti s_n(T)}, ~M_{n,\ti t_n(T)} \simeq \Oo_{\ti t_n(T)},
\end{split}
\] where $\Oo(mD)_{\ti s_i(T)} = \tau_i^*\Oo(mD)|_{\theta_i^{-1}(\ti s_i(T))}$ with $\tau_i: X_{B_i} \to X_T \to X_U$, $\theta_i: X_{B_i} \to B_i$. Note that $X_T \to T$ is isomorphic to both $\theta_i^{-1}(\ti s_i(T)) \to \ti s_i(T)$ and $\theta_i^{-1}(\ti t_i(T)) \to \ti t_i(T)$, where $\theta_i: X_{B_i} \to B_i$. As $\bar B_i$ is smooth over $\bar K$, shrinking $U, T$ further, we can assume that each $B_i$ is also smooth.

After shrinking $T$ (hence also $U$), we will show that $mg^*D \equiv 0/T$, where $g: X_T \to X_U$. Because the intersection is taken in the singular cohomology groups, this can be checked in the analytic topology.  First, as $B_i$ is smooth, shrinking $T$, we can assume that $B_i \to T$ is smooth. As $X_T \to T$ is a smooth morphism, $X_{B_i} \to B_i$ is also a smooth morphism between smooth varieties. Thus $X_{B_i} \to B_i$ is locally trivial in the analytic topology by Ehresmann's theorem. Let $\ell \subset \theta_1^{-1}(\ti s_1(T))$ be a curve which maps to a point on $\ti s_1(T)$. Let $\ell'\subset \theta_1^{-1}(\ti t_1(T))$ be a manifold which is a deformation of $\ell$ in the analytic topology (we do not need $\ell'$ to be an algebraic curve). By induction on $i$, it is enough to show
\[
M_{1,\ti s_1(T)} \cdot \ell = M_{1,\ti t_1(T)} \cdot \ell'.
\] As $M_{1,\ti s_1(T)} \cdot \ell =M_1 \cdot \ell$ and $M_{1,\ti t_1(T)} \cdot \ell'=M_1 \cdot \ell'$, the desired result follows. In particular, we have $D \equiv 0/U$.

\noindent Step 3. To obtain an open set $U$ which is independent of divisors, we can use one of the following two approaches: 

(A) By $\dim N^1(X/S)_\Rr < \infty$, we have
\[
\Ker(N^1(X/S)_\Rr \to N^1(X_{\bar K})_\Rr) < \infty.
\] Let $[D_1], \ldots, [D_e]$ be a basis of $\Ker(N^1(X/S)_\Rr \to N^1(X_{\bar K})_\Rr)$. By the above construction, there exists an open set $U_i$ such that $D_i \equiv 0/U_i$. Then $U \coloneqq \cap_{i=1}^e U_i$ satisfies the desired property.

(B) Replacing $X$ by a resolution, it is enough to show the claim for smooth $X$. Shrinking $U$, we can assume that $X_U \to U$ is smooth. We show that $U$ satisfies the desired property. Let $D$ be any divisor such that $D_{\bar K} \equiv 0$. By the above construction, there exists an open set $V \subset U$ such that $D_V \equiv 0/V$.  We claim that $D \equiv 0/U$. It is enough to show that for any curve $\ell$ such that $\ell$ maps to a point in $U-V$, we have $D \cdot \ell =0$. By Ehresmann's theorem, $\ell$ can be deformed to a complex manifold $\ell'$ in the analytic topology such that $\ell'$ maps to a point in $V$ under $f$. Thus $D\cdot \ell'=D\cdot \ell$. By the dual form of the Lefschetz theorem on $(1,1)$-classes, there exists an algebraic curve $\ell''$ such that $\ell''$ maps to a point in $V$ under $f$ and $D\cdot \ell'=D\cdot \ell''$. Therefore, $D\cdot \ell=D\cdot \ell''=0$.
\end{proof}

We thank Chen Jiang for pointing out that in the following proposition, $W$ is defined over $\Qq$.

\begin{proposition}[{\cite[Proposition 3.8]{Li23}}]\label{prop: max vector defined over Q}
Let $(X, \De) \to S$ be a klt Calabi-Yau fiber space. Let $W$ and $W'$ be the maximal vector spaces in $\bEff(X/S)$ and $\bMov(X/S)$, respectively. Then $W$ and $W'$ are defined over $\Qq$.
\end{proposition}

We can describe $W$ concretely when $R^1f_*\Oo_X=0$.

\begin{proposition}\label{prop: max vector space of Mov}
Let $f: X \to S$ be a fibration with $X$ a $\Qq$-factorial variety. Then $\bAmp(X/S)$ is non-degenerate. If we further assume that $R^1f_*\Oo_X = 0$, then the following results hold.
\begin{enumerate}
\item There is a natural surjective linear map 
\[
r: N^1(X/S)_\Rr \to N^1(X_U/U)_\Rr \quad [D] \mapsto [D|_U].
\] When $U$ is sufficiently small, we have 
\begin{equation}\label{eq: kerr}
\Ker(r) = {\rm Span}_\Rr\{[D] \mid \Supp D \subset \Supp(X-X_U)\}.
\end{equation}
\item The maximal vector space $W \subset \bEff(X/S)$ is generated by divisors in $\Ker(r)$. In particular,  $W \subset \Eff(X/S)$.
\item If $S$ is $\Qq$-factorial, then $\bMov(X/S)$ is non-degenerate.
\end{enumerate}
\end{proposition}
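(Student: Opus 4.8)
The plan is to establish the four assertions in order, with (2) resting on (1), (3) on (2), and (4) on (2)--(3).

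\emph{Part (1).} Surjectivity of $r$ is immediate: the closure in $X$ of a prime divisor on $X_U$ is $\Qq$-Cartier (since $X$ is $\Qq$-factorial) and restricts to it, so every class of $N^1(X_U/U)_\Rr$ is in the image. The inclusion $\supseteq$ in \eqref{eq: kerr} is trivial. For $\subseteq$: by Proposition \ref{prop: Generic property}, for $U$ small $r$ factors through the injection $N^1(X_U/U)_\Rr\hookrightarrow N^1(X_{\bar K})_\Rr$, so $\Ker(r)=\Ker\bigl(N^1(X/S)_\Rr\to N^1(X_{\bar K})_\Rr\bigr)$, a fixed finite-dimensional subspace. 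Using $\Qq$-factoriality (so that $N^1(X/S)_\Qq$ is a quotient of $\mathrm{Cl}(X)_\Qq$) and the localization sequence for class groups — restriction to the generic fibre has kernel generated by the classes of vertical prime divisors — one sees this subspace is spanned by the classes of finitely many vertical prime divisors; shrinking $U$ so that all of these divisors lie over $S\setminus U$ gives the reverse inclusion.

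\emph{Part (2).} Let $W\subseteq\bEff(X/S)$ be the maximal vector space and $[D]\in W$, so $\pm[D]\in\bEff(X/S)$. Restriction to the geometric generic fibre sends a pseudo-effective class over $S$ to a pseudo-effective class on $X_{\bar K}$ (a component dominating $S$ restricts to an effective divisor, a vertical one to $0$), hence $\pm[D_{\bar K}]\in\bEff(X_{\bar K})$. By Proposition \ref{prop: Generic property 0}(1), $X_{\bar K}$ is a projective klt Calabi--Yau variety over $\bar K$, and the pseudo-effective cone of a projective variety contains no line; therefore $[D_{\bar K}]=0$, i.e.\ $[D]\in\Ker(r)$ for $U$ small, by (1). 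Thus $W\subseteq\Ker(r)$, and since $W\subseteq\Ker(r)$, $W$ is precisely the lineality space of $\bEff(X/S)\cap\Ker(r)$. Now any effective divisor whose class lies in $\Ker(r)$ is vertical over $S$ (its restriction to a general fibre is effective and numerically trivial, hence zero), so $\bEff(X/S)\cap\Ker(r)$ is the cone generated by the finitely many prime components of $X-X_U$; being a rational polyhedral cone, its lineality space $W$ is defined over $\Qq$, i.e.\ generated by divisor classes in $\Ker(r)$.

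\emph{Parts (3) and (4).} Let $W'$ be the maximal vector space of $\bMov(X/S)$. From $\bMov(X/S)\subseteq\bEff(X/S)$ and (2), $W'\subseteq W\subseteq P\cap(-P)$, where $P:=\bEff(X/S)\cap\Ker(r)$ is the rational polyhedral cone of (2). Applying Theorem \ref{thm: Shokurov-Choi} (in the form of Remark \ref{rmk: use good mm}) to $P\subseteq\Eff(X/S)$ writes $P=\bigcup_j P_j^\circ$ as a finite union of open rational polyhedral cones on each of which a weak log canonical model $Y_j$ of $(X,\De+\ep B)$ may be chosen independently of $[B]\in P_j^\circ$. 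For such $[B]$ one has $[B]\in\bMov^e(X/S)$ iff $X\dto Y_j$ contracts no divisor: the ``only if'' uses $[K_X+\De+\ep B]=\ep[B]\in\bMov(X/S)$ and the covering-curve argument of Lemma \ref{le: lift to iso in codim 1}, while the ``if'' holds because then $K_{Y_j}+\De_{Y_j}\sim_\Rr 0/S$, so $\ep B_{Y_j}$ is nef$/S$ and pseudo-isomorphisms preserve $\bMov$. This condition depends only on the cell and passes to faces of movable cells, so $\bMov(X/S)\cap P=\bMov^e(X/S)\cap P$ is a finite union of closures of cells, hence a rational polyhedral cone; its lineality space is $W'$, which is therefore defined over $\Qq$ — this proves (3). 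For (4), assume $S$ is $\Qq$-factorial and suppose $W'\ne\{0\}$. Then $Q:=\bMov(X/S)\cap\Ker(r)=\bMov(X/S)\cap P$ is a rational polyhedral cone with nonzero lineality, and a nonzero rational class $v$ in its relative interior lies in a movable cell; since good minimal models exist, the (nef) model there is semi-ample, so a multiple of the corresponding divisor moves and $v$ is represented by a movable $\Qq$-divisor $M$ with $[M]\in\Ker(r)$. As in (2), $[M]\in\Ker(r)$ forces $M$ vertical over $S$; but a movable divisor vertical over a $\Qq$-factorial base is a pullback from $S$ — the surjection $f^*f_*\Oo_X(mM)\to\Oo_X(mM)$ shows $mM$ and $f^*\Gamma_m$ agree in codimension one (an excess would lie in the base locus of the mobile system $|mM/S|$), and two $\Qq$-Cartier divisors agreeing in codimension one on a $\Qq$-factorial variety coincide — so $[M]=0$, a contradiction. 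Hence $W'=\{0\}$ and $\bMov(X/S)$ is non-degenerate.

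\emph{Main obstacle.} The delicate points are: (i) the precise structure of the ``vertical'' cone $\bEff(X/S)\cap\Ker(r)$ — showing that effective classes dying over a small $U$ genuinely come from vertical divisors and that their cone is rational polyhedral — which underlies the rationality statements in (2) and (3); and (ii) in (4), the lemma that a movable divisor vertical over a $\Qq$-factorial base is a pullback, together with the step of promoting a relative-interior class of $\bMov(X/S)\cap\Ker(r)$ to an honest movable $\Qq$-divisor, which is exactly where the existence of good minimal models (via Theorem \ref{thm: Shokurov-Choi} and the semi-ampleness of nef weak log canonical models) is genuinely used.
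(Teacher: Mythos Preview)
Your overall structure is sound, but there is a genuine gap in (3), and your (4) takes an unnecessarily indirect route that inherits it.

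For (3), you invoke Theorem \ref{thm: Shokurov-Choi} ``in the form of Remark \ref{rmk: use good mm}'', but you never verify the hypothesis of that remark. Proposition \ref{prop: max vector space of Mov} does \emph{not} assume good minimal models exist in dimension $\dim(X/S)$, so the Shokurov decomposition is not available as a black box. The paper's proof handles this by using that for $[D]\in W'\subseteq W$ the divisor $D$ can be taken vertical over $S$: in the notation of Remark \ref{rmk: use good mm} one then gets $K_{F'}+\tilde B'|_{F'}\sim_\Rr E'|_{F'}$ with $E'|_{F'}$ exceptional over $\tilde F$, and an MMP with scaling contracts $E'|_{F'}$ and terminates at a good minimal model. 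This verification is the technical core of (3) and is absent from your sketch. Two smaller points: in (1) your localization argument does not explain the role of the torsion hypothesis --- passing from $D_{\bar K}\equiv 0$ to a class-group statement requires $\Pic(X_{\bar K})_\Qq\simeq N^1(X_{\bar K})_\Qq$, i.e.\ $H^1(X_{\bar K},\Oo_{X_{\bar K}})=0$, after which the paper uses Galois descent; and in (2) the paper shows directly that $\Ker(r)\subseteq\Eff(X/S)$ (for $D$ supported on $X-X_U$, add $kf^*H$ for ample $H$ on $S$ with $f(\Supp D)\subseteq\Supp H$), giving $W=\Ker(r)$ outright --- your assertion that $\bEff(X/S)\cap\Ker(r)$ equals the cone on the components of $X-X_U$ is unjustified, since a pseudo-effective class in $\Ker(r)$ is a limit of effective classes that need not themselves lie in $\Ker(r)$.

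For (4), the paper's argument is direct and independent of (3): for $0\neq[D]\in W'$, take $D$ effective and vertical (by (2)); since $S$ is $\Qq$-factorial, subtract a suitable $f^*B$ and shrink $S$ so that $D$ becomes very exceptional; the standard covering-curve argument (as in \cite[Lemma 3.3]{Bir12b}) then produces a family $\{C_t\}$ sweeping out a component of $\Supp D$ with $D\cdot C_t<0$ (after possibly replacing $D$ by $-D$), contradicting $[D]\in\bMov(X/S)$. No Shokurov polytopes, no good minimal models, and no ``movable vertical over $\Qq$-factorial base $\Rightarrow$ pullback'' lemma (which would itself need a careful proof) are needed.
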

\begin{proof}
Let $D$ be a divisor such that $\pm [D] \in \bAmp(X/S)$. This implies that for any curve $C$ contracted by $f$, we have $\pm D \cdot C \geq 0$. Thus we have $D \equiv 0/S$. Hence, $\bAmp(X/S)$ is non-degenerate.

For (1), note that $r([D])=[D|_U]$ is well-defined. If $D_U$ is a divisor on $X_U$ such that $D_U = \sum c_i B_i$ is the decomposition into irreducible components, then $\overline{D_U} \coloneqq \sum c_i \bar B_i$ is a divisor on $X$ such that $(\overline{D_U})|_U=D_U$. Hence $r$ is surjective.

Let $\bar f: X_{\bar K} \to \spec \bar K$. As $\spec \bar K \to S$ is flat, $R^1\bar f_{*}\Oo_{X_{\bar K}} = (R^1f_*\Oo_X)_{\bar K}=0$. Thus $N^1(X_{\bar K})_\Qq \simeq \Pic(X_{\bar K})_{\Qq}$ (see Lemma \ref{lem: torsion is zero} (3)). As $r$ is defined over $\Qq$, $\Ker(r)$ is also defined over $\Qq$. It is enough to show \eqref{eq: kerr} for Cartier divisors. Take $D$ to be a Cartier divisor such that $[D] \in \Ker(r)$. Shrinking $S$ to $U$ as in Proposition \ref{prop: Generic property}, then by Proposition \ref{prop: Generic property}, we have $D_{\bar K} \equiv 0$. Possibly replacing $D$ by a multiple, we can assume $D_{\bar K} \sim 0$. Thus $D_{\bar K} = \di(\bar\alpha)$ for some $\bar\alpha \in K(X_{\bar K})$. Shrinking $U$ further, by Lemma \ref{lem: spread out and specialization}, there is a finite Galois morphism $T \to U$ such that the above relation is defined on $X_T/T$. In particular, $D_T\coloneqq D|_T = \di(\alpha)$ for some $\alpha \in K(X_T)$. As $D_T$ is $\Gal(X_T/X_U)$-invariant, we have
\[
mD_T = \di(\tau) \text{~with~} \tau \coloneqq \prod_{\theta \in \Gal(X_T/X_U)} \theta(\alpha),
\] where $m = |\Gal(X_T/X_U)|$. As $\tau$ is $\Gal(X_T/X_U)$-invariant, there exists a $\beta \in K(X)$ whose pullback is $\tau$ under the morphism $X_T \to X_U$. Thus $mD_U = \di(\beta)$ on $X_U$. Therefore,
\[
\Supp(mD - \di(\beta)) \subset X - X_U.
\] This shows $``\subset"$ in \eqref{eq: kerr}. The converse inclusion is trivial.

For (2), note that for any $[D] \in \bEff(X/S)$, we have $r([D]) \in \bEff(X_U/U)$. We claim that if $r([D]) \neq 0$, then $[D] \not\in W$. Otherwise, $r([D])\neq 0$ implies that $[D_{\bar K}] \neq 0$ by Proposition \ref{prop: Generic property}. If $[D]\in W$, then $\pm [D_{\bar K}] \in \bEff(X_{\bar K})$. Hence $\bEff(X_{\bar K})$ is degenerate. This is a contradiction as $X_{\bar K}$ is projective. Therefore, $[D] \in W$ implies that $[D] \in \Ker(r)$.

Conversely, let $D$ be an $\Rr$-Cartier divisor such that $$\Supp D \subset \Supp(X-X_U).$$ Then $f(\Supp D) \neq S$. There is an ample divisor $H>0$ on $S$ such that $f(\Supp D) \subset \Supp H$. Thus $D +kf^*H >0$ for some $k \gg 1$ and $[D +kf^*H] = [D] \in \Eff(X/S)$.

For (3), assume that $S$ is $\Qq$-factorial. Let $W' \subset \bMov(X/S)$ be the maximal vector space. We claim that if $0 \neq [D] \in W'$, then there exists a family of curves $\{C_t \mid t\in R\}$ which covers a divisor such that $[C_t] \in N_1(X/S)$ and $D \cdot C_t \neq 0$. By (2), we can assume that $D$ is vertical over $S$. As $S$ is $\Qq$-factorial, there exists an $\Rr$-Cartier divisor $B$ on $S$ such that $D-f^*B$ is a very exceptional divisor. Replacing $D$ by $D-f^*B$, we can assume that $D$  is a very exceptional divisor. Write $D= D^+-D^-$ such that $D^+, D^- \geq 0$ do not have common components. If $D^+ \neq 0$ (resp. $D^- \neq 0$), then by the standard reduce-to-surface argument (for example, see \cite[Lemma 3.3]{Bir12b}), there exists a family of curves $\{C_t \mid t\in R\}$ covering an irreducible component of $\Supp D^+$ (resp. $\Supp D^-$) such that $[C_t] \in N_1(X/S)$ and $D^+ \cdot C_t < 0$ (resp. $D^- \cdot C_t < 0$). Thus $D \cdot C_t < 0$ (resp. $D \cdot C_t > 0$). This shows the claim.

Possibly replacing $D$ by $-D \in W'$, we can assume that $D \cdot C_t<0$. This contradicts with $D \in \bMov(X/S)$.
\end{proof}

\begin{question}
Do the claims in Proposition \ref{prop: max vector space of Mov} still hold true for an arbitrary fibration $f: X \to S$?
\end{question}

We will need the following lemma in the sequel.

\begin{lemma}\label{lem: torsion is zero}
Let $f: X \to S$ be a fibration.
\begin{enumerate}
\item (\cite[Corollary 7.8]{Kol86}) If $X$ and $S$ have rational singularities, then $R^1f_*\Oo_X$ is torsion free.
\item If $(X, \De) \to S$ is a klt Calabi-Yau fibratioin, then $X$ and $S$ both have rational singularities.
\item If $H^1(X_{\bar K}, \Oo_{X_{\bar K}})=0$, then $\Pic({X_{\bar K}})_\Qq = N^1({X_{\bar K}})_\Qq$.
\end{enumerate}
\end{lemma}
\begin{proof}
(1) Let $\tau: S' \to S$ and $\sigma: X' \to X$ be resolutions such that there exists a fibration $f': X' \to S'$ with $\tau \circ f' = f \circ \sigma$. By \cite[Corollary 7.8]{Kol86}, $R^1f'_*\Oo_{X'}$ is a torsion free sheaf. As $X, S$ have rational singularities, Leray spectral sequence implies that
\[
R^1(f\circ \sigma)_*\Oo_{X'}=R^1f_*\Oo_X \quad \text{~and~} \quad R^1(\tau \circ f')_*\Oo_{X'}= \tau_*R^1f_*'\Oo_{X'}. 
\] As $\tau_*R^1f_*'\Oo_{X'}$ is torsion free, $R^1f_*\Oo_X$ is torsion free.

(2) By \cite[Theorem 0.2]{Amb05}, there exists a klt pair $(S, B)$ such that $K_X+\De \sim_\Rr f^*(K_S+B)/S$. As klt singularities are rational (see \cite[Theorem 5.22]{KM98}), $X$ and $S$ both have rational singularities.

(3) Let $\Pic^0(X_{\bar K})$ be the identity component of the Picard scheme of $X_{\bar K}$. We have $\dim \Pic^0(X_{\bar K})=\dim H^1(X_{\bar K}, \Oo_{X_{\bar K}})$. Hence $H^1(X_{\bar K}, \Oo_{X_{\bar K}})=0$ implies that $\Pic({X_{\bar K}})_\Qq = N^1({X_{\bar K}})_\Qq$.
\end{proof}

Recall that $\Gamma_B$ is the image of $\PsAut(X/S, \De)$ under the natural group homomorphism $\iota: \PsAut(X/S, \De) \to {\rm GL}(N^1(X/S)_\Rr)$.

\begin{lemma}\label{le: trivial kernel}
Let $f: X \to S$ be a Calabi-Yau fiber space such that $X$ has terminal singularities. Assume that $R^1f_*\Oo_X=0$. Let $W\subset \bMov(X/S)$ be the maximal vector space. Then
\[
\Gamma_W\coloneqq \{\gamma \in \Gamma_B \mid  \gamma \text{~acts trivially on ~}N^1(X/S)_\Rr/W\}
\] is a finite group.
\end{lemma}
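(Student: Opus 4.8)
The plan is to reduce the statement, up to finite index, to the assertion that $\iota(\PsAut(X/S))$ contains no nontrivial unipotent transformation of a very restricted shape, and then to extract that from the birational geometry of the generic fibre.

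\emph{Step 1: the action on $W$ is finite.} Choose an open $U\subseteq S$ small enough that Propositions \ref{prop: Generic property}--\ref{prop: max vector space of Mov} apply, and let $W_e\subseteq\bEff(X/S)$ be its maximal vector space. By Proposition \ref{prop: max vector space of Mov}, $W_e$ is spanned by the finitely many prime divisors $P_1,\dots,P_m$ with $\Supp P_i\subseteq\Supp(X-X_U)$, it is defined over $\Qq$, and it equals $\Ker\big(N^1(X/S)_\Rr\to N^1(X_K)_\Rr\big)$; moreover $W\subseteq\bMov(X/S)\subseteq\bEff(X/S)$ gives $W\subseteq W_e$. Every $g\in\PsAut(X/S)$ is an isomorphism in codimension one and lies over $S$, so its divisorial pushforward preserves, for each $i$, the centre on $S$ of the valuation $v_{P_i}$; hence $g_*$ permutes $\{P_1,\dots,P_m\}$, and the image of $\Gamma_B$ in ${\rm GL}(W_e)$ — a fortiori in ${\rm GL}(W)$ — is finite. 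Let $\Gamma_B^0\subseteq\Gamma_B$ be the finite-index subgroup acting trivially on $W_e$. It suffices to prove that $\Gamma_W\cap\Gamma_B^0$ is finite, and since $\Gamma_B/\Gamma_B^0$ is finite, that $\Gamma_W\cap\Gamma_B^0$ is \emph{trivial}.

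\emph{Step 2: triviality on $W$ forces a square-zero unipotent.} Let $\gamma=\iota(g)\in\Gamma_W\cap\Gamma_B^0$ and set $N=\gamma-{\rm id}$. Triviality on $N^1(X/S)_\Rr/W$ gives $N\big(N^1(X/S)_\Rr\big)\subseteq W$, while triviality on $W_e\supseteq W$ gives $N|_{W_e}=0$; hence $N^2=0$ and $\gamma$ is unipotent. Since $N$ preserves the lattice and $N\big(N^1(X/S)_\Rr\big)\subseteq W\subseteq\Ker N$, the powers $\gamma^k={\rm id}+kN$ are pairwise distinct whenever $N\ne 0$. Thus it remains to show: there is no $g\in\PsAut(X/S)$ with $\iota(g)={\rm id}+N$, $N\ne 0$, $N^2=0$, $N\big(N^1(X/S)_\Rr\big)\subseteq W$ and $N|_{W_e}=0$.

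\emph{Step 3 (the crux).} Suppose such a $g$ exists. Because $N\big(N^1(X/S)_\Rr\big)\subseteq W\subseteq W_e=\Ker\big(N^1(X/S)_\Rr\to N^1(X_K)_\Rr\big)$, the induced pseudo-automorphism $g_K$ of the generic fibre $X_K$ acts trivially on $N^1(X_K)_\Rr$, in particular it takes ample classes to ample classes; a standard comparison on a common resolution $p,q: Y\to X_K$ of $g_K$ — using the negativity lemma and that $g_K$ is crepant, since $K_{X_K}\sim_\Rr 0$ — then shows $g_K\in\Aut(X_K/K)$. Hence $g$ is an isomorphism over a dense open $U_0\subseteq S$, which we may take inside $U$, and $g$ differs from an automorphism of $X/S$ only by flops in codimension $\ge 2$ over the nowhere-dense set $S\setminus U_0$. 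I expect bounding these flops to be the main obstacle. The approach I would take is to restrict $g$ to the generic point of each of the finitely many components of $S\setminus U_0$: this produces a pseudo-automorphism of a terminal Calabi--Yau fibre space over a base of strictly smaller dimension (over a point, $\bMov$ is non-degenerate, so $W=0$ and the corresponding statement is the trivial identity $\Gamma_W=\{{\rm id}\}$), which suggests an induction on $\dim S$, combined with the fact that $X$ is terminal with $K_X$ nef$/S$ (so $p^*K_X=q^*K_X$ on any resolution of $g$). A negativity/length-of-extremal-ray estimate on the flopping locus should then force $\iota(g)$ to fix every $[P_i]$ and, more generally, to be trivial on $W_e$-directions, i.e.\ $N=0$, completing the proof.
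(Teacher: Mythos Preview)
Your Steps 1 and 2 are correct: the permutation action on the vertical primes gives a finite-index subgroup $\Gamma_B^0$ acting trivially on $W_e=\Ker r$, and every element of $\Gamma_W\cap\Gamma_B^0$ is unipotent of the form ${\rm id}+N$ with $N^2=0$, so that subgroup is finite if and only if it is trivial. This is a clean reduction.

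The problem is Step 3, which is not a proof. You yourself flag the gap (``I expect bounding these flops to be the main obstacle'', ``should then force\ldots''). Concretely, from $g_K\in\Aut(X_K/K)$ acting trivially on $N^1(X_K)$ you only get that $g_*D-D$ is vertical for every $D$; you already knew $N(N^1)\subseteq W\subseteq W_e$. What you must exclude is a nonzero $N$ landing in $W$, and neither the proposed induction on $\dim S$ nor a ``negativity/length-of-extremal-ray estimate'' is made precise enough to do this. There is no clear mechanism here that uses the terminal hypothesis, which is essential in the paper's argument.

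The paper takes a completely different, and much shorter, route. It does not attempt to prove $\Gamma_W\cap\Gamma_B^0=\{{\rm id}\}$ at all. Instead it shows that the full preimage $G=\iota^{-1}(\Gamma_W)\subseteq\PsAut(X/S)$ is finite. Using Proposition~\ref{prop: max vector space of Mov}(1)(2), the quotient $N^1(X/S)_\Rr/W$ surjects onto $N^1(X_U/U)_\Rr$, so every $g\in G$ satisfies $g\cdot H\equiv H/U$ for an ample $H$; passing to the geometric generic fibre and using $H^1(X_{\bar K},\Oo_{X_{\bar K}})=0$ gives $g_{\bar K}\cdot H_{\bar K}\sim H_{\bar K}$. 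Then the finiteness of birational automorphisms of a terminal Calabi--Yau fixing an ample line bundle (Oguiso \cite[Proposition~2.4]{Ogu14}, with Hanamura \cite[(3.14)]{Han87} replacing smoothness by terminality) yields that $\{g_{\bar K}:g\in G\}$ is finite; injectivity of $g\mapsto g_{\bar K}$ finishes. This is where the terminal assumption on $X$ is actually consumed. I suggest abandoning the induction/flop-bounding plan and following this finiteness-on-the-generic-fibre argument.
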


\begin{proof}
As $R^1f_*\Oo_X=0$, we have $H^1(X_{\bar K},\Oo_{X_{\bar K}})=0$ and thus $\Pic(\bar X)_\Qq \simeq N^1(X_{\bar K})_\Qq$ by Lemma \ref{lem: torsion is zero} (3). Let $G\coloneqq \{g\in \PsAut(X/S) \mid \iota(g)\in \Gamma_W\}$. It suffices to show that $G$ is a finite set. By Proposition \ref{prop: max vector space of Mov} (1) and (2), there exists an open set $U \subset S$ such that $N^1(X/S)_\Rr/W \to N^1(X_U/U)_\Rr$ is surjective. Let $H$ be an ample$/S$ divisor on $X_U$. Then $g\cdot H \equiv H/U$ for any  $g\in G$. Thus $g_{\bar K} \cdot H_{\bar K} \equiv H_{\bar K}$ in $N^1(X_{\bar K})$, where $g_{\bar K}$ and $H_{\bar K}$ correspond to $g$ and $H$ respectively after the base change. 

We claim that $\{g_{\bar K} \mid g\in G\}$ is a finite set. First, as $g_{\bar K}$ is isomorphic in codimension $1$ and $g_{\bar K} \cdot H_{\bar K} \equiv H_{\bar K}$, we see that $g_{\bar K} \in \Aut(X_{\bar K})$. Let 
\[
\Aut_{H_{\bar K}}(X_{\bar K}) \coloneqq \{h\in \Aut(X_{\bar K}) \mid h\cdot H_{\bar K} \equiv H_{\bar K} \}
\] be a sub-scheme of the group scheme $\Aut(X_{\bar K})$, then $\Aut_{H_{\bar K}}(X_{\bar K})$ is a scheme of finite type over $\bar K$ (see, for example, \cite[Remark 2.6]{MZ18}). Let $\Aut^0(X_{\bar K})$ be the identity component of the group scheme $\Aut(X_{\bar K})$, then \cite[Theorem 4.5]{Xu20} shows that $\dim \Aut^0(X_{\bar K}) =\dim H^1(X_{\bar K}, \Oo_{X_{\bar K}})=0$. Hence, $\Aut(X_{\bar K})$ is a discrete group and thus $\Aut_{H_{\bar K}}(X_{\bar K})$ is a finite group. This implies that $\{g_{\bar K} \mid g\in G\}$ is a finite set.

Finally, for $g, h\in G$, if $g_{\bar K} = h_{\bar K}$, then $g=h$. Thus $G$ is also a finite set. 
\end{proof}

\begin{remark}The group $\Gamma_W$ may not be trivial. Consider a Calabi-Yau threefold $X$ which is a general member in the linear system of $|\Oo_{\Pp^2 \times \Pp^1 \times \Pp^1}(3,2,2)|$. \cite[Example 3.8 (4)]{Kaw97} shows that the natural projection $X \to \Pp^2$ is an elliptic fiberation which admits a sequence of flops 
\[
\gamma_1 \circ \gamma_2 \circ \gamma_1 \cdots: X \dto X \dto \cdots \dto X
\] over $\Pp^2$, where $\gamma_1, \gamma_2 \in \Bir(X/\Pp^2)$. In particular, for each $X \to \Pp^2$, we have $N^1(X/\Pp^2)_\Rr=\Mov(X/\Pp^2)=W=\Rr$. Thus $\Gamma_W= \{\pm 1\}$ which acts trivially on $N^1(X/\Pp^2)_\Rr/W$.
\end{remark}

\section{A variant of the cone conjecture}\label{sec: A variant of the cone conjecture}

In this section, we study the relationship between the cone conjecture and Conjecture \ref{conj: shokurov polytope}. Note that in Conjecture \ref{conj: shokurov polytope}, by enlarging $P_M$ and $P_A$, we can always assume that $P_M$ and $P_A$ are rational polyhedral cones. Recall that a polyhedral cone is closed by definition and $\Gamma_B$ (resp. $\Gamma_A$) is the image of $\PsAut(X/S, \De)$ (resp. $\Aut(X/S,\De)$) under the group homomorphism $\PsAut(X/S, \De) \to {\rm GL}(N^1(X/S)_\Rr)$. By Definition \ref{def: polyhedral type}, we set
\[
\begin{split}
&\Mov(X/S)_+ \coloneqq \Conv(\bMov(X/S) \cap N^1(X/S)_\Qq),\\
&\Amp(X/S)_+ \coloneqq \Conv(\bAmp(X/S) \cap N^1(X/S)_\Qq).
\end{split}
\] 

We thank the referee for providing the proof of Lemma \ref{lem: inclusion} (2), which weakens the original assumption.

\begin{lemma}\label{lem: inclusion}
Let $f: (X, \De) \to S$ be a klt Calabi-Yau fiber space. 
\begin{enumerate}
\item (\cite[Theorem 2.15]{LOP20}) We have $\bAmp^e(X/S) \subset \Amp(X/S)_+$.
\item Assume the existence of minimal models for effective klt pairs in $\dim(X/S)$, then $\bMov^e(X/S) \subset \Mov(X/S)_+$
\end{enumerate}
\end{lemma}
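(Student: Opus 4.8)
The plan for (2) is to follow the proof of (1) (that is, of \cite[Theorem 2.15]{LOP20}), but with the local rational polyhedrality of the nef cone over $\Eff(X/S)$ from Theorem \ref{thm: nef cone is polyhedral} replaced by the chamber decomposition of $\Eff(X/S)$ provided by Theorem \ref{thm: Shokurov-Choi}, combined with Lemma \ref{le: lift to iso in codim 1}. The guiding idea is that a class in $\bMov^e(X/S)$ should ``come from the nef cone of a small modification of $X$'', and nef cones are rational polyhedral over the effective cone.

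First I would fix $[D]\in\bMov^e(X/S)$; since $\bMov^e(X/S)$ and $\Mov(X/S)_+$ are cones, I may assume $[D]\neq 0$ and rescale $[D]$ freely. As $[D]\in\Eff(X/S)$, write $[D]=\sum_{i=1}^k a_i[D_i]$ with $a_i\ge 0$ and $D_i\ge 0$ effective $\Qq$-Cartier divisors. After rescaling the $D_i$ and $[D]$ I may assume that $(X,\De+\sum_i D_i)$ is klt and that $D:=\sum_i a_iD_i$, as well as every divisor in $\Conv(\{0,D_1,\dots,D_k\})$, gives a klt pair when added to $\De$. Set $P:=\Cone([D_i]\mid 1\le i\le k)\subset\Eff(X/S)$, a rational polyhedral cone, and apply Theorem \ref{thm: Shokurov-Choi} to write $P=\bigcup_{j=0}^m P^\circ_j$. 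Then $[D]\in P^\circ_j$ for some $j$, and $j\neq 0$ since $[D]\neq 0$; note $\overline{P^\circ_j}$ is a rational polyhedral cone containing $[D]$.

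Next, since $K_X+\De+D\equiv D/S$ and $[D]\in\bMov(X/S)$, Theorem \ref{thm: HX13} together with the hypothesis on good minimal models gives a (good) minimal model of $(X,\De+D)$ over $S$; as $[K_X+\De+D]\in\bMov(X/S)$, Lemma \ref{le: lift to iso in codim 1} then produces a weak log canonical model $\phi\colon X\dashrightarrow Y$ of $(X,\De+D)$ with $Y$ $\Qq$-factorial and $\phi$ an isomorphism in codimension $1$; in particular $K_Y+\De_Y=\phi_*(K_X+\De)\sim_\Rr 0/S$. Now let $[v]\in P^\circ_j$ be any rational class; I may represent it by an effective $\Qq$-divisor $v\in\Cone(\{D_1,\dots,D_k\})$ and, after rescaling $v$ into $\Conv(\{0,D_1,\dots,D_k\})$ (which does not affect whether $[v]\in\bMov(X/S)$, and keeps $[v]\in P^\circ_j$ as $P^\circ_j$ is a cone), assume $(X,\De+v)$ is klt. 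Applying Theorem \ref{thm: Shokurov-Choi} with the classes $[D],[v]\in P^\circ_j$: since $(Y/S,\De_Y+D_Y)$ is a weak log canonical model of $(X/S,\De+D)$, so is $(Y/S,\De_Y+v_Y)$ for $(X/S,\De+v)$; hence $K_Y+\De_Y+v_Y$ is nef$/S$, and since $K_Y+\De_Y\sim_\Rr 0/S$ we get $[v_Y]\in\bAmp(Y/S)\subset\bMov(Y/S)$. Because $\phi$ is an isomorphism in codimension $1$, $\phi_*$ identifies $\bMov(X/S)$ with $\bMov(Y/S)$, so $[v]\in\bMov(X/S)$. Thus every rational point of $P^\circ_j$ lies in $\bMov(X/S)$; as $\bMov(X/S)$ is closed and the rational points are dense in $P^\circ_j$, we conclude $\overline{P^\circ_j}\subset\bMov(X/S)$. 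Finally, $[D]\in\overline{P^\circ_j}$ is a nonnegative real combination of the finitely many rational generators of the rational polyhedral cone $\overline{P^\circ_j}$, each of which lies in $\bMov(X/S)\cap N^1(X/S)_\Qq$, so $[D]\in\Conv(\bMov(X/S)\cap N^1(X/S)_\Qq)=\Mov(X/S)_+$.

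I expect the main obstacle to be the step producing a \emph{small} weak log canonical model: one must feed the minimal model of $(X,\De+D)$ into Lemma \ref{le: lift to iso in codim 1} to arrange that the weak log canonical model attached (via Theorem \ref{thm: Shokurov-Choi}) to the whole chamber $P^\circ_j$ is realised by a pseudo-isomorphism $\phi\colon X\dashrightarrow Y$, since it is only through $\phi_*$ that ``nef on $Y$'' can be pulled back to ``movable on $X$''. This is exactly where both hypotheses are used: $[D]\in\bMov(X/S)$ to invoke Lemma \ref{le: lift to iso in codim 1}, and $(X,\De)$ Calabi-Yau so that $K_X+\De+B$ is numerically proportional over $S$ to the perturbing divisor $B$, which is what lets all of $P^\circ_j$ share one weak log canonical model. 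The remaining points — that a single scaling of $\De$-boundaries works throughout the chamber, that rational classes in $\Eff(X/S)$ are represented by effective $\Qq$-divisors, and that pseudo-isomorphisms preserve $\bMov$ — are routine.
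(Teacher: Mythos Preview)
Your proposal is correct and follows essentially the same route as the paper's own proof: represent $[D]$ by an effective divisor, apply the chamber decomposition of Theorem \ref{thm: Shokurov-Choi} to the rational polyhedral cone $P$ generated by its components, use Theorem \ref{thm: HX13} and Lemma \ref{le: lift to iso in codim 1} to produce a small $\Qq$-factorial weak log canonical model $Y$ attached to the chamber $P_j^\circ$ containing $[D]$, and conclude that $\overline{P_j^\circ}\subset\bMov(X/S)$, hence $[D]\in\Mov(X/S)_+$. The only cosmetic differences are that the paper handles the klt condition via a single scalar $\epsilon$ rather than a preliminary rescaling, and it argues directly for all $[B]\in P_j^\circ$ instead of first treating rational points and then closing up.
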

\begin{proof}
For $[D] \in \Eff(X/S)$, replacing $D$ by a divisor which is numerically equivalent to $D$, we can assume that the irreducible decomposition of $D =\sum_{i=1}^k a_i D_i$ with $a_i>0$. Let $P \coloneqq  \Cone([D_i] \mid i=1, \ldots, k) \subset \Eff(X/S)$ be a rational polyhedral cone.

The (1) is shown in \cite[Theorem 2.15]{LOP20}. We include an argument for the reader’s
convenience. Assume that $[D] \in \bAmp^e(X/S)$. By Theorem \ref{thm: nef cone is polyhedral}, $P_N = P \cap \bAmp(X/S)$ is a rational polyhedral cone. Thus $$[D] \in P_N \subset \Amp(X/S)_+.$$

For (2), by the assumption on the existence of minimal models for effective klt pairs, we obtain
\[
\bMov^e(X/S) \;=\; \bigcup_{\alpha \colon X \dto X' \,\text{small $\Qq$-factorial modification$/S$}} \alpha^* \bAmp^e(X'/S).
\] By (1), we have $\bAmp^e(X'/S) \subset \Amp(X'/S)_+$. As $\alpha^*: N^1(X'/S)_\Qq \to N^1(X/S)_\Qq$ is an isomorphism of $\Qq$-vector spaces, we have $\alpha^*\bAmp^e(X'/S) \subset \alpha^*\Amp(X'/S)_+ \subset \Mov(X'/S)_+$.
\end{proof}

\begin{lemma}\label{le: shrink to fundamental domain}
Let $f: (X, \De) \to S$ be a klt Calabi-Yau fiber space. 
\begin{enumerate}
\item Assume the existence of good minimal models for effective klt pairs in $\dim(X/S)$.  If there exists a rational polyhedral cone $P_M \subset \Eff(X/S)$ satisfying Conjecture \ref{conj: shokurov polytope} (1), then there is a rational polyhedral cone $Q_M \subset \Mov(X/S) \cap P_M$ such that
\begin{equation}\label{eq: equal after action}
\bigcup_{g \in \PsAut(X/S, \De)} g\cdot Q_M =  \Mov(X/S).
\end{equation}
\item If there exists a rational polyhedral cone $P_A \subset \Eff(X/S)$ satisfying Conjecture \ref{conj: shokurov polytope} (2), then there is a rational polyhedral cone $Q_A \subset \bAmp^e(X/S) \cap P_A$ such that
\begin{equation}\label{eq: equal after action 2}
\bigcup_{g \in \Aut(X/S,\De)} g\cdot Q_A =  \bAmp^e(X/S).
\end{equation}
\end{enumerate}
\end{lemma}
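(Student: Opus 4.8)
The plan is to handle the two statements separately, dispatching the ample one first since it is short.

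For the ample assertion I would take $Q_A := P_A\cap\bAmp(X/S)$, a rational polyhedral cone by Theorem~\ref{thm: nef cone is polyhedral}, contained in $\bAmp^e(X/S)\cap P_A$ since $P_A\subset\Eff(X/S)$. As $\Aut(X/S,\De)$ preserves $\Eff(X/S)$, $\Amp(X/S)$ and $\bAmp(X/S)$, one has $g\cdot Q_A=(g\cdot P_A)\cap\bAmp(X/S)\subset\bAmp^e(X/S)$ for every $g$, hence $\bigcup_g g\cdot Q_A\subset\bAmp^e(X/S)$, while $\bigcup_g g\cdot P_A\supset\Amp(X/S)$ and $g^{-1}\cdot\Amp(X/S)=\Amp(X/S)$ give $\bigcup_g g\cdot Q_A\supset\Amp(X/S)$. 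Now $\Amp(X/S)$ is a full-dimensional open non-degenerate cone whose associated $C_+$ is $\Amp(X/S)_+$, and $Q_A\subset\bAmp^e(X/S)\subset\Amp(X/S)_+$ by Lemma~\ref{lem: inclusion}(1); so Proposition~\ref{prop: prop-def}, applied with $C=\Amp(X/S)$ and $\Pi=Q_A$, promotes $\Gamma_A\cdot Q_A\supset\Amp(X/S)$ to $\Gamma_A\cdot Q_A=\Amp(X/S)_+$. Sandwiching $\bAmp^e(X/S)\supset\bigcup_g g\cdot Q_A=\Amp(X/S)_+\supset\bAmp^e(X/S)$ yields \eqref{eq: equal after action 2} (and incidentally $\bAmp^e(X/S)=\Amp(X/S)_+$).

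For the movable assertion, using Theorem~\ref{thm: Shokurov-Choi} I would decompose $P_M=\bigcup_{i=0}^m P_i^\circ$ into finitely many open rational polyhedral cones, with a fixed small $\ep_0>0$ (after a harmless bounded rescaling of $P_M$) so that the weak log canonical model type of $(X/S,\De+\ep_0(\cdot))$ is constant on each $P_i^\circ$. Put $I=\{i:P_i^\circ\cap\Mov(X/S)\neq\emptyset\}$ and $Q_M:=\Cone\big(\bigcup_{i\in I}\overline{P_i^\circ}\big)$, a rational polyhedral cone. Granting the core claim that $\overline{P_i^\circ}\subset\Mov(X/S)$ for $i\in I$, convexity of $P_M$ and of $\Mov(X/S)$ gives $Q_M\subset\Mov(X/S)\cap P_M$, whence $\bigcup_g g\cdot Q_M\subset\Mov(X/S)$ by $\PsAut(X/S,\De)$-invariance; conversely, for $[D]\in\Mov(X/S)$ the hypothesis gives $g$ and $[p]\in P_M$ with $[D]=g\cdot[p]$, and $[p]=g^{-1}\cdot[D]\in\Mov(X/S)$ lies in some $P_i^\circ$ with $i\in I$, hence in $Q_M$, so $[D]\in g\cdot Q_M$. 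This gives \eqref{eq: equal after action}.

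To prove the core claim, fix $i\in I$ and $[D_0]\in P_i^\circ\cap\Mov(X/S)$ with an effective representative and $(X,\De+\ep_0 D_0)$ klt. Then $[K_X+\De+\ep_0 D_0]=\ep_0[D_0]\in\bMov(X/S)$, and, the very general fibres of $(X/S,\De+\ep_0 D_0)$ being effective klt pairs of nonnegative Kodaira dimension, Theorem~\ref{thm: HX13} and the hypothesis give a good minimal model over $S$; by Lemma~\ref{le: lift to iso in codim 1} it may be taken to be a $\Qq$-factorial $Y_i$ isomorphic to $X$ in codimension $1$, and then $(Y_i,\De_{Y_i})$ is klt with $K_{Y_i}+\De_{Y_i}\sim_\Rr0/S$. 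By Theorem~\ref{thm: Shokurov-Choi} with the fixed $\ep_0$, for every rational $[D]\in P_i^\circ$ the pair $(Y_i/S,\De_{Y_i}+\ep_0 D_{Y_i})$ is a weak log canonical model of $(X/S,\De+\ep_0 D)$, so $D_{Y_i}$ is nef over $S$; by scale-invariance, density, and closedness of the nef cone of $Y_i/S$, $D_{Y_i}$ is nef over $S$ for all $[D]\in\overline{P_i^\circ}$. For a rational generator $[v]$ of $\overline{P_i^\circ}$, which lies in $\Eff(X/S)$ and has $v_{Y_i}$ nef over $S$, the pair $(Y_i,\De_{Y_i}+\delta v_{Y_i})$ for small $\delta$ is klt with log canonical divisor nef over $S$, hence is its own weak log canonical model; having a good minimal model over $S$ (again by Theorem~\ref{thm: HX13}), it is one, so $v_{Y_i}$ is semi-ample, hence movable, over $S$, and pulling back along the isomorphism in codimension $1$ gives $[v]\in\Mov(X/S)$. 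Convexity of $\Mov(X/S)$ then gives $\overline{P_i^\circ}\subset\Mov(X/S)$.

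The delicate point throughout is this core claim: landing $\overline{P_i^\circ}$ in $\Mov(X/S)$ itself rather than only in $\bMov(X/S)$ (which is all the argument of Lemma~\ref{lem: inclusion}(2) gives) is what forces the use of good minimal models to turn weak log canonical models into semi-ample ones, the choice of a single $Y_i$ isomorphic to $X$ in codimension $1$ valid uniformly over $P_i^\circ$, and a continuity argument to cover the boundary rays of $P_i^\circ$.
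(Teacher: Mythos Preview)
Your proof is correct and follows essentially the same route as the paper. For the ample case you take $Q_A=P_A\cap\bAmp(X/S)$ and invoke Theorem~\ref{thm: nef cone is polyhedral}, Lemma~\ref{lem: inclusion}(1), and Proposition~\ref{prop: prop-def} exactly as the paper does; for the movable case you decompose $P_M$ via Theorem~\ref{thm: Shokurov-Choi}, select the chambers meeting the movable cone, and prove $\overline{P_i^\circ}\subset\Mov(X/S)$ by producing a single $\Qq$-factorial $Y_i$ isomorphic to $X$ in codimension~1 (via Lemma~\ref{le: lift to iso in codim 1} and Theorem~\ref{thm: HX13}) on which the strict transforms become semi-ample---again matching the paper. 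Two cosmetic remarks: the paper indexes $I$ by $P_i^\circ\cap\bMov(X/S)\neq\emptyset$ rather than $\Mov(X/S)$, but your choice works equally well since $\PsAut$ preserves $\Mov(X/S)$; and your ``fixed small $\ep_0$ after rescaling $P_M$'' is slightly loose (a cone is scale-invariant), but since Theorem~\ref{thm: Shokurov-Choi} allows any $\ep$ making both pairs klt, and representatives can always be scaled down, no harm is done.
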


\begin{proof}
For (1), by Theorem \ref{thm: Shokurov-Choi}, $P_M=\cup_{i=0}^mP^\circ_i$ is a union of finitely many relatively open rational polyhedral cones. Let $P^\circ_1, \ldots, P^\circ_k$ be the polyhedral cones such that $P^\circ_j \cap \bMov(X/S) \neq\emptyset$.

 We claim that $P_j \coloneqq \overline{P_j^\circ}\subset \Mov(X/S)$. Let $D \geq 0$ such that $[D] \in P^\circ_j \cap \bMov(X/S)$. Assume that $(Y/S, \De_Y+\ep D_Y)$ is a weak log canonical model of $(X/S, \De+\ep D)$ for some $\ep \in \Qq_{>0}$. By Lemma \ref{le: lift to iso in codim 1}, we can assume that $X, Y$ are isomorphic in codimension $1$. Take $[B'] \in P_j$, by $P_M \subset \Eff(X/S)$, there exists a sequence $\{[B_l]\}_{l \in \Nn} \subset P_j^\circ$ with $B_l \geq 0$ such that $\lim_{l \to +\infty} [B_l] = [B']$ and $\lim_{l \to +\infty} B_l = B$ as the limit of Weil divisors. Thus $[B']=[B]$. By Theorem \ref{thm: Shokurov-Choi}, there exists a $\delta \in \Qq_{>0}$ such that $(Y/S, \De_Y+\delta B_{l,Y})$ is a weak log canonical model of $(X/S, \De+\delta B_l)$ for each $l$. Thus $(Y/S, \De_Y+ \delta B_Y)$ is also a weak log canonical model of $(X/S, \De+\delta B)$. Indeed, $K_Y+\De_Y+\delta B=\lim_{l \to +\infty} K_Y+\De_Y+\delta B_{l,Y}$ is nef over $S$, and for any prime divisor $D$ over $Y$, the log discrepancies satisfy 
 \[
 a(D; X, \De+\delta B) = \lim_{l \to +\infty} a(D; X, \De+\delta B_l) \leq \lim_{l \to +\infty} a(D; Y, \De_Y+\delta B_{l,Y})=  a(D; Y, \De_Y+\delta B_Y). 
 \] By Theorem \ref{thm: HX13}, $B_Y$ is semi-ample$/S$. Hence $B$ is movable, and thus $[B] \in \Mov(X/S)$.
 
Let $Q_M \coloneqq \Cone(P_1, \ldots, P_k)$ be the cone generated by $P_j, 1\leq  j \leq k$. Then $Q_M \subset \Mov(X/S)$. For $[M] \in \Mov(X/S)$, there exists a $g\in \PsAut(X/S, \De)$ such that $g\cdot [M] \in P_M$. Thus $g\cdot [M] \in P_j^\circ$ for some $j$ and hence $g\cdot [M] \in Q_M$. This shows \eqref{eq: equal after action}.

For (2), Theorem \ref{thm: nef cone is polyhedral} shows that $Q_A \coloneqq P_A \cap \bAmp(X/S)$ is a rational polyhedral cone. By $P_A \subset \Eff(X/S)$, we have $Q_A \subset \bAmp^e(X/S)$. For any $[H] \in \Amp(X/S)$, there exist an $[H'] \in P_A$ and a $g\in \Aut(X/S,\De)$ such that $g\cdot [H']= [H]$. Hence $[H'] \in Q_A$. Thus $\Gamma_A\cdot Q_A \supset \Amp(X/S)$. As $\bAmp(X/S)$ is non-degenerate (see Proposition \ref{prop: max vector space of Mov}) and $Q_A \subset \Amp(X/S)_+$ by Lemma \ref{lem: inclusion} (1), Proposition \ref{prop: prop-def} implies that $\Gamma_A\cdot Q_A = \Amp(X/S)_+ \supset \bAmp^e(X/S)$. The $``\subset"$ of \eqref{eq: equal after action 2} follows from the definition. 
\end{proof}

\begin{proposition}\label{prop: fun domain of Mov}
Let $f: (X,\De) \to S$ be a klt Calabi-Yau fiber space. Let $W\subset \bMov(X/S)$ be the maximal vector space. Assume that good minimal models exist for effective klt pairs in dimension $\dim(X/S)$. Suppose that there is a polyhedral cone $P \subset \Mov(X/S)$ such that $$\PsAut(X/S, \De) \cdot P = \Mov(X/S).$$ We have the following results. 
\begin{enumerate}
\item If either $R^1f_*\Oo_X=0$ or $W=0$, then we have $$\Mov(X/S) = \bMov^e(X/S)=\Mov(X/S)_+.$$
\item There are finitely many varieties $Y_j/S, j\in J$ such that if $X \dto Y/S$ is isomorphic in codimension $1$ with $Y$ a $\Qq$-factorial variety, then $Y \simeq Y_j/S$ for some $j \in J$.
\item If $\bMov(X/S)$ is non-degenerate, then $\bMov^e(X/S)$ has a rational polyhedral fundamental domain under the action of $\Gamma_B$.
\item If $R^1f_*\Oo_X=0$, then $\bMov^e(X/S)$ has a weak rational polyhedral fundamental domain (maybe degenerate) under the action of $\Gamma_B$. 
\end{enumerate}
\end{proposition}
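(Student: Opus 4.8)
The hypothesis is exactly what is needed to enter the setting of Lemma~\ref{le: shrink to fundamental domain}. Since $P\subseteq\Mov(X/S)$, each of its finitely many generators is a positive real combination of classes of movable Cartier divisors; collecting these finitely many classes produces a rational polyhedral cone $P_M$ with $P\subseteq P_M\subseteq\Mov(X/S)\subseteq\Eff(X/S)$ and $\Gamma_B\cdot P_M\supseteq\Gamma_B\cdot P=\Mov(X/S)$, so $P_M$ satisfies Conjecture~\ref{conj: shokurov polytope}~(1). By Lemma~\ref{le: shrink to fundamental domain} there is a rational polyhedral cone $Q_M\subseteq\Mov(X/S)$ with $\Gamma_B\cdot Q_M=\Mov(X/S)$; this is the object fed into the cone-geometry machinery below. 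Before using $P$ I would record the unconditional identity $\bMov^e(X/S)=\Mov(X/S)$ (using only the standing assumption on good minimal models): given $[D]\in\bMov^e(X/S)$ represented by an effective $\Rr$-divisor $D$ and $\epsilon\in\Qq_{>0}$ with $(X,\De+\epsilon D)$ klt, one has $K_X+\De+\epsilon D\sim_\Rr\epsilon D/S$, so the general fiber has Kodaira dimension $\geq 0$ and Theorem~\ref{thm: HX13} gives a good minimal model over $S$; since $[K_X+\De+\epsilon D]=\epsilon[D]\in\bMov(X/S)$, Lemma~\ref{le: lift to iso in codim 1} upgrades it to a $\Qq$-factorial weak log canonical model $Y'$ isomorphic to $X$ in codimension $1$, which is therefore a good minimal model, so $K_{Y'}+\De_{Y'}+\epsilon D_{Y'}$ is semiample$/S$; as $(Y'/S,\De_{Y'})$ is again a klt Calabi--Yau fiber space, $K_{Y'}+\De_{Y'}\sim_\Rr 0/S$, hence $D_{Y'}$ is semiample$/S$ and $[D_{Y'}]\in\Mov(Y'/S)$, and transporting back through the identification $N^1(Y'/S)_\Rr=N^1(X/S)_\Rr$ (which carries $\Mov(Y'/S)$ onto $\Mov(X/S)$) gives $[D]\in\Mov(X/S)$. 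With Lemma~\ref{lem: inclusion}~(2) this yields $\Mov(X/S)=\bMov^e(X/S)\subseteq\Mov(X/S)_+$.

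For (1), set $C\coloneqq\Int(\bMov(X/S))$, a full-dimensional open cone with $\overline{C}=\bMov(X/S)$ and $C_+=\Mov(X/S)_+$; note $C\subseteq\Mov(X/S)$ because an interior point of $\bMov(X/S)$ is an ample class plus a class of $\bMov(X/S)$ and $\Amp(X/S)+\bMov(X/S)\subseteq\Mov(X/S)$. We have $Q_M\subseteq\Mov(X/S)\subseteq C_+$ and $\Gamma_B\cdot Q_M=\Mov(X/S)\supseteq C$. If $W=0$, then $C$ is non-degenerate, so Proposition~\ref{prop: prop-def} forces $\Gamma_B\cdot Q_M=C_+=\Mov(X/S)_+$; together with $\Gamma_B\cdot Q_M=\Mov(X/S)$ this gives $\Mov(X/S)=\Mov(X/S)_+$, hence all three cones coincide. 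If instead $R^1f_*\Oo_X$ is a torsion sheaf, then $W$ is defined over $\Qq$ by Proposition~\ref{prop: max vector space of Mov}~(3); running the same argument in $N^1(X/S)_\Rr/W$ via Lemma~\ref{le: induced polyhedral type} shows that the images of $\Mov(X/S)$ and of $\Mov(X/S)_+$ in $N^1(X/S)_\Rr/W$ coincide, so $\Mov(X/S)+W=\Mov(X/S)_+$; but $\bMov^e(X/S)=\Mov(X/S)$ is stable under translation by $W$ (since $W\subseteq\bMov^e(X/S)$ and $\bMov^e(X/S)$ is closed under addition), so $\Mov(X/S)+W=\Mov(X/S)$ and again $\Mov(X/S)=\bMov^e(X/S)=\Mov(X/S)_+$. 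This proves (1).

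For (2), apply Theorem~\ref{thm: Shokurov-Choi} to $Q_M$: it decomposes into finitely many open rational polyhedral pieces on each of which, for a suitable $\epsilon$, the weak log canonical model of $(X/S,\De+\epsilon(-))$ is constant; by Lemma~\ref{le: lift to iso in codim 1} each piece produces a $\Qq$-factorial small modification of $X/S$, and these give a finite list $Y_1,\dots,Y_r$. If $Y$ is $\Qq$-factorial with $X\dashrightarrow Y$ an isomorphism in codimension $1$, then the strict transform on $X$ of a relatively ample class on $Y$ lies in $\Mov(X/S)=\Gamma_B\cdot Q_M$, so some $g\in\Gamma_B$ moves it into a piece of $Q_M$; lifting $g$ to $\PsAut(X/S,\De)$ and conjugating, $Y$ becomes isomorphic over $S$ to the model attached to that piece, hence to some $Y_j$.

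For (3) and (4) one only needs (1) plus the cone-geometry results. When $\bMov(X/S)$ is non-degenerate, $W=0$ and by (1) $\bMov^e(X/S)=\Mov(X/S)_+=C_+$; the pair $(C_+,\Gamma_B)$ is of polyhedral type via $Q_M$, and $\Gamma_B\hookrightarrow{\rm GL}(N^1(X/S)_\Rr)$ is injective, so Lemma~\ref{le: existence of fun domain} supplies a rational polyhedral fundamental domain for $\bMov^e(X/S)$ under $\Gamma_B$, which is (3). When $R^1f_*\Oo_X$ is a torsion sheaf, $W$ is defined over $\Qq$ and again $\bMov^e(X/S)=\Mov(X/S)_+=C_+$, so Proposition~\ref{prop: degenerate cone} produces a rational polyhedral cone $\Pi\subseteq C_+$ with $\Gamma_B\cdot\Pi=C_+$ and $\gamma\Pi=\Pi$ or $\gamma\Pi\cap\Int(\Pi)=\emptyset$ for all $\gamma$, i.e. a weak rational polyhedral fundamental domain for $\bMov^e(X/S)$, possibly degenerate, whose stabilizer is the finite group $\Gamma_W$ of Lemma~\ref{le: trivial kernel}; this is (4). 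The step I expect to be most delicate is the unconditional identity $\bMov^e(X/S)=\Mov(X/S)$ in the first paragraph, where one must run the relative minimal model program and use Lemma~\ref{le: lift to iso in codim 1} to realise a class on the boundary of the movable cone as a genuinely movable class on a small modification; the other point needing care is the descent of the Looijenga argument modulo the rational linear part $W$ in the degenerate cases of (1) and (4).
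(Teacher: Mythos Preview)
Your proof is correct and follows the same route as the paper: pass to a rational polyhedral cone inside $\Mov(X/S)$, feed it into the polyhedral-type machinery (Proposition~\ref{prop: prop-def}, Lemma~\ref{le: existence of fun domain}, Proposition~\ref{prop: degenerate cone}) for (1), (3), (4), and use Theorem~\ref{thm: Shokurov-Choi} plus Lemma~\ref{le: lift to iso in codim 1} for (2). Your upfront isolation of the identity $\bMov^e(X/S)=\Mov(X/S)$ is a clean reorganisation of what the paper obtains more indirectly by showing $W\subset\Mov(X/S)$ and combining with Lemma~\ref{lem: inclusion}; note also that your detour through Lemma~\ref{le: shrink to fundamental domain} is unnecessary, since your enlarged cone $P_M$ already lies in $\Mov(X/S)$ and hence already satisfies $\Gamma_B\cdot P_M=\Mov(X/S)$.

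One caution: your closing remark that the stabiliser of $\Pi$ in (4) is the \emph{finite} group $\Gamma_W$ of Lemma~\ref{le: trivial kernel} is not justified here---that lemma assumes $X$ has terminal singularities and $\De=0$, whereas the present proposition is for arbitrary klt Calabi--Yau fiber spaces $(X,\De)$. This does not affect the proof of (4), which only claims a weak fundamental domain, but the finiteness assertion should be dropped.
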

\begin{proof}
Possibly enlarging $P$, we can assume that $P$ is a rational polyhedral cone.

For (1), we have $``\subset"$ for the above three cones by Lemma \ref{lem: inclusion}. By Proposition \ref{prop: max vector defined over Q}, $W$ is defined over $\Qq$. By definition, $\Mov(X/S) \supset  \Int(\bMov(X/S))$. Then  $\Gamma_B \cdot P \supset  \Int(\bMov(X/S))$. Thus $(\Mov(X/S)_+, \Gamma_B)$ is of polyhedral type. We follow the notation of Lemma \ref{le: induced polyhedral type}. By Lemma \ref{le: induced polyhedral type} (3) and Proposition \ref{prop: prop-def}, we have \begin{equation}\label{eq: quotient equal}
\Gamma_B \cdot \ti P = (\widetilde{\Mov(X/S)})_+ = {(\Mov(X/S)_+)\ \widetilde{}} \ ,
\end{equation} where the last equality follows from Lemma \ref{le: induced polyhedral type} (2). 

We claim that $W \subset\Mov(X/S)$. By Proposition \ref{prop: max vector space of Mov} (1) and (2), $W \subset \Eff(X/S)$. Let $[D] \in W$ be a rational point such that $D \geq 0$. Then for a sufficiently small $\ep\in \Qq_{>0}$, $(X/S, \De+\ep D)$ has a weak log canonical model $(Y/S, \De+\ep D_Y)$. Because $[D] \in \bMov(X/S)$, by Lemma \ref{le: lift to iso in codim 1}, we can assume that $X, Y$ are isomorphic in codimension $1$. Note that $D_Y$ is semi-ample$/S$ by Theorem \ref{thm: HX13}. Thus $[D] \in \Mov(X/S)$. As $W$ is $\Gamma_B$-invariant and $\Gamma_B\cdot (P+W) = \Mov(X/S)_+$ by \eqref{eq: quotient equal}, we have $\Mov(X/S) = \Mov(X/S)_+$.

For (2), there exists a decomposition $P= \cup_{i=1}^k P_k^\circ$ as in Theorem \ref{thm: Shokurov-Choi}. For each $j$, by Lemma \ref{le: lift to iso in codim 1} and Theorem \ref{thm: HX13}, we can choose a $f_j: X \dto Y_j/S$ which is isomorphic in codimension $1$ such that if $[D]\in P_j^\circ$ with $D \geq 0$, then $(Y_j/S, \De_{Y_j}+\ep D_{Y_j})$ is a $\Qq$-factorial weak log canonical model of $(X/S, \De+\ep D)$ for some $\ep \in \Qq_{>0}$. We claim that if $g: Y \dto X/S$ is isomorphic in codimension $1$, then $Y \simeq Y_j/S$ for some $j$. In fact, let $A \geq 0$ be an ample$/S$ divisor on $Y$. Then $g_*A \in \Mov(X/S)$. Let $\sigma \in \PsAut(X/S, \De)$ such that $\sigma \cdot g_*A \in P$. Then $\sigma \cdot g_*A \in P_j^\circ$ for some $j$. Note that $Y, Y_j$ are $\Qq$-factorial varieties. Because $(\sigma \cdot g_*A)_{Y_j}=(f_j \circ \sigma \circ g)_*A$ is nef/$S$ and 
\[
f_j \circ \sigma \circ g: Y \dto X \dto X \dto Y_j/S
\] is isomorphic in codimension $1$, we have $Y \simeq Y_j/S$.

For (3) and (4), note that $(\Mov(X/S)_+, \Gamma_B)$ is of polyhedral type. By Proposition \ref{prop: max vector defined over Q} and Proposition \ref{prop: degenerate cone}, there is a rational polyhedral cone $\Pi$ such that $\Gamma_B \cdot \Pi = \Mov(X/S)_+$, and for each $\gamma \in \Gamma_B$, either $\gamma \Pi\cap \Int(\Pi) = \emptyset$ or $\gamma \Pi = \Pi$. Moreover, 
\[
\{\gamma \in \Gamma_B \mid \gamma \Pi = \Pi\} = \{\gamma \in \Gamma_B \mid \gamma \text{~acts trivially on ~}N^1(X/S)_\Rr/W\}.
\] Hence $\Pi$ is a weak rational polyhedral fundamental domain. In particular, if $W=0$, then $\Pi$ is a rational polyhedral fundamental domain.
\end{proof}

\begin{remark}
The assumption in Proposition \ref{prop: fun domain of Mov} (1) is necessary. \cite[Example 3.8 (2)]{Kaw97} gives an elliptic fibration (hence $R^1f_*\Oo_X \neq 0$) with $W \neq 0$ such that $\Mov(X/S) = \bMov^e(X/S) \neq \Mov(X/S)_+$. In this example, $W$ is defined over $\Qq$ but $W \not\subset \Mov(X/S)$.
\end{remark}

\begin{proposition}\label{prop: fun for Amp}
Let $f: (X,\De) \to S$ be a klt Calabi-Yau fiber space. Suppose that there is a polyhedral cone $P \subset \bAmp^e(X/S)$ such that $\Aut(X/S,\De) \cdot P = \bAmp^e(X/S)$. We have the following results. 
\begin{enumerate}
\item There are finitely many varieties $Y_j/S, j \in J$ such that if $X \to Z/S$ is a surjective fibration to a normal variety $Z$, then $Y_j \simeq Z/S$ for some $j \in J$.
\item The cone $\bAmp^e(X/S)$ has a rational polyhedral fundamental domain.
\end{enumerate}
\end{proposition}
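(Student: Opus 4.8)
The plan is to deduce part~(2) from the polyhedral-type formalism of Section~\ref{sec: Geometry of convex cones}, and then to bootstrap part~(1) from the fundamental domain produced in~(2) together with the way a contraction of $X/S$ is encoded by a semi-ample class. For part~(2): by Lemma~\ref{lem: inclusion}(1), $P \subseteq \bAmp^e(X/S) \subseteq \Amp(X/S)_+$, while by hypothesis $\Gamma_A \cdot P = \bAmp^e(X/S) \supseteq \Amp(X/S)$. The cone $\Amp(X/S)$ is open, non-degenerate and of full dimension in $N^1(X/S)_\Rr$; together with the lattice $N^1(X/S)$ it is $\Gamma_A$-invariant, and $\Gamma_A \hookrightarrow {\rm GL}(N^1(X/S)_\Rr)$ is injective by construction. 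Hence the above says precisely that $(\Amp(X/S)_+, \Gamma_A)$ is of polyhedral type in the sense of Definition~\ref{def: polyhedral type}, with witness $\Pi = P$. Proposition~\ref{prop: prop-def} then upgrades this to $\Gamma_A \cdot P = \Amp(X/S)_+$, so that $\bAmp^e(X/S) = \Amp(X/S)_+$, and Lemma~\ref{le: existence of fun domain} provides a rational polyhedral fundamental domain $\Sigma$ for the $\Gamma_A$-action on $\Amp(X/S)_+ = \bAmp^e(X/S)$. This gives~(2).

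For part~(1), I would work with the cone $\Sigma$ just produced: a rational polyhedral cone with $\Gamma_A \cdot \Sigma = \bAmp^e(X/S)$ and with only finitely many faces $F_1, \dots, F_N$. Given a surjective fibration $g\colon X \to Z/S$ with $Z$ normal, fix an ample$/S$ $\Qq$-divisor $A$ on $Z$; then $g^*A$ is semi-ample$/S$, so $[g^*A] \in \bAmp^e(X/S)$, and after choosing $\sigma \in \Aut(X/S,\De)$ with $[\sigma_*(g^*A)] \in \Sigma$ and replacing $g$ by $g \circ \sigma^{-1}$ — which has the same target $Z$, since $\sigma_*(g^*A) = (g \circ \sigma^{-1})^*A$ — we may assume $[g^*A] \in \Sigma$. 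Let $F$ be the unique face of $\Sigma$ whose relative interior contains $[g^*A]$. The heart of the matter is the claim that the isomorphism class of $Z$ over $S$ depends only on $F$; granting it, for each $F_k$ pick a representative target $Z_{F_k}$ (when one exists), and every $Z$ arising as above is isomorphic over $S$ to one of $Z_{F_1}, \dots, Z_{F_N}$, which is~(1).

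To prove the claim, let $g_1\colon X \to Z_1/S$ and $g_2\colon X \to Z_2/S$ give classes $v_i = [g_i^*A_i]$ lying in the relative interior of a common face $F$ of $\Sigma$. Since $v_1$ is relatively interior in $F$ and $v_2 \in F$, for all small $\ep>0$ the class $(1+\ep)v_1 - \ep v_2 = v_1 + \ep(v_1 - v_2)$ still lies in $F \subseteq \bAmp(X/S)$, hence is nef$/S$; pairing with any curve $C$ contracted by $g_1$, where $v_1 \cdot C = 0$ by the projection formula, forces $v_2 \cdot C \leq 0$, and nefness of $v_2$ gives $v_2 \cdot C = 0$. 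Thus $v_2$ is trivial on every curve contained in a fibre of $g_1$. Take $m$ with $m v_2$ base-point-free$/S$, with associated contraction $\psi\colon X \to W/S$ and $m v_2 = \psi^*H$ for some ample$/S$ divisor $H$ on $W$; since $\psi^*H \cdot C = m\,v_2 \cdot C = 0$ for every curve $C$ in a fibre of $g_1$, the morphism $\psi$ is constant on each (connected, proper) fibre of $g_1$, so by the rigidity lemma $\psi = \phi \circ g_1$ for some $\phi\colon Z_1 \to W$ over $S$, and $v_2$ descends to the semi-ample$/S$ class $\frac1m[\phi^*H]$ on $Z_1$. Comparing the relative $\mathrm{Proj}$ over $S$ defined by $v_2$ on $X$, namely $g_2\colon X \to Z_2$, with the composite of $g_1$ and the fibration of $Z_1$ defined by $\phi^*H$, one gets a morphism $Z_1 \to Z_2$ over $S$ through which $g_2$ factors; by symmetry there is a morphism $Z_2 \to Z_1$ over $S$ through which $g_1$ factors, and as $g_1, g_2$ are dominant and the targets separated, these are mutually inverse. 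Hence $Z_1 \cong Z_2$ over $S$.

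I expect the main obstacle to be this last claim — in particular the step that a semi-ample$/S$ class on $X$ which is trivial on the curves contracted by $g_1$ descends to a semi-ample$/S$ class on $Z_1$ whose induced contraction reconstructs $Z_2$ — which requires the rigidity lemma together with a careful comparison of relative $\mathrm{Proj}$ constructions over $S$. Everything else is formal, relying only on the Looijenga-type machinery of Section~\ref{sec: Geometry of convex cones} (Proposition~\ref{prop: prop-def}, Lemma~\ref{le: existence of fun domain}) and Lemma~\ref{lem: inclusion}.
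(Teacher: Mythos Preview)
Your proof is correct and follows essentially the same approach as the paper. The only differences are cosmetic: the paper proves (1) directly from the given cone $P$ (so (1) does not rely on (2)), whereas you first establish (2) and then use the fundamental domain $\Sigma$ in place of $P$; and you spell out in detail the claim that the isomorphism class of the target $Z$ depends only on the face containing $[g^*A]$, which the paper compresses into the single phrase ``$g\circ\theta^{-1}$ corresponds to the contraction of $F$''.
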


This result can be shown analogously as Proposition \ref{prop: fun domain of Mov}  and thus we only sketch the proof.

\begin{proof}[Sketch of the Proof]
For (1), let $A$ be an ample$/S$ divisor on $Z$. Then for a morphism $g: X \to Z/S$, $g^*A$ lies in $\bAmp^e(X/S)$. There exists $\theta \in\Aut(X/S,\De)$ such that $[\theta \cdot g^*A]$ lies in the interior of a face $F\subset P$. The morphism $g\circ \theta^{-1}:X \to Z$ corresponds to the contraction of $F$. As $P$ is a polyhedral cone, there are only finitely many faces.

(2) follows from Lemma \ref{le: existence of fun domain} as $\Amp(X/S)$ is non-degenerate by Proposition \ref{prop: max vector space of Mov}.
\end{proof}

We have the following remark regarding the cones chosen in the statement of the cone conjecture (cf. \cite[Section 3]{LOP18}):

\begin{remark}\label{rmk: choice of cones}
Let $f: (X,\De) \to S$ be a klt Calabi-Yau fiber space. Assuming that good minimal models of effective klt pairs exist in dimension $\dim(X/S)$ and either $R^1f_*\Oo_X=0$ or $\bMov(X/S)$ is non-degenerate, 
Lemma \ref{le: shrink to fundamental domain} and Proposition \ref{prop: fun domain of Mov} (1) imply that $\Mov(X/S)$ has a (weak) rational polyhedral fundamental domain iff $\bMov^e(X/S)$ has a (weak) rational polyhedral fundamental domain. 

Therefore, at least when $S$ is a point, modulo the standard conjectures in the minimal model program, there is no difference to state the cone conjecture of movable cones for either $\Mov(X/S)$ or $\bMov^e(X/S)$. 

If $\Amp(X/S)$ has a (weak) rational polyhedral fundamental domain, then $(\Amp(X/S)_+, \Gamma_A)$ is of polyhedral type. Proposition \ref{prop: prop-def} and Lemma \ref{lem: inclusion} imply that $$\Amp(X/S)=\bAmp^e(X/S)=\Amp(X/S)_+.$$ Therefore, $\bAmp^e(X/S)$ has a rational polyhedral fundamental domain by Lemma \ref{le: existence of fun domain}. In summary, the cone conjecture for $\Amp(X/S)$ implies that for $\bAmp^e(X/S)$.

However, a priori, $\Mov(X/S)_+$ (resp. $\Amp(X/S)_+$) has a rational polyhedral fundamental domain $\Pi$ may not imply that $\bMov^e(X/S)$ (resp. $\bAmp^e(X/S)$) has a rational polyhedral fundamental domain. More importantly, under this assumption, we only know $\Pi \subset \bEff(X/S)$, hence Theorem \ref{thm: Shokurov-Choi} and Theorem \ref{thm: nef cone is polyhedral} do not apply in this setting. Therefore, the argument of finiteness of birational models which are isomorphic in codimension $1$ (resp. finiteness of contraction morphisms) breaks. It is for this reason that we do not state the cone conjectures for $\Mov(X/S)_+$ and $\Amp(X/S)_+$.
\end{remark}

The above discussions lead to the proof of Theorem \ref{thm: main 1}. 

\begin{proof}[Proof of Theorem \ref{thm: main 1}]
The (1) and (2) follow from Lemma \ref{le: shrink to fundamental domain} and Proposition \ref{prop: fun domain of Mov} (4) and (3). The (3)  follows from Lemma \ref{le: shrink to fundamental domain} and Proposition \ref{prop: fun for Amp} (2).
\end{proof}

\section{Generic and Geometric cone conjectures}\label{sec: Generic and Geometric cone conjecture}

\subsection{Generic cone conjecture}\label{subsec: generic cone conj}

For a Calabi-Yau fiber space, we study the relationship between the relative cone conjecture and the cone conjecture of its generic fiber. Conjecture \ref{conj: shokurov polytope} is especially convenient to study movable cones in the relative setting. Hence we only focus on the cone conjecture for movable cones in this section. 

Let $f: X \to S$ be a Calabi-Yau fiber space. Recall that $K\coloneqq K(S)$ is the field of rational functions of $S$, and $X_K \coloneqq X \times_{S} \spec K$.

\begin{theorem}\label{thm: generic cone conjecture}
Let $f: X \to S$ be a Calabi-Yau fiber space such that $X$ has terminal singularities. Suppose that good minimal models of effective klt pairs exist in dimension $\dim(X/S)$. Assume that $R^1f_*\Oo_X=0$. 

If the weak cone conjecture holds true for $\bMov^e(X_K/K)$, then the weak cone conjecture holds true for $\bMov^e(X/S)$. 

Moreover, if $\Mov(X/S)$ is non-degenerate, then the cone conjecture holds true for $\bMov^e(X/S)$. In particular, if $S$ is $\Qq$-factorial, then the cone conjecture holds true for $\bMov^e(X/S)$. 
\end{theorem}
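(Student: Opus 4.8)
## Proof proposal for Theorem~\ref{thm: generic cone conjecture}

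The plan is to reduce the relative cone conjecture for $\bMov^e(X/S)$ to the construction of a Shokurov-type polytope $P_M\subset\Eff(X/S)$ as in Conjecture~\ref{conj: shokurov polytope}~(1), and then invoke Theorem~\ref{thm: main 1}. Concretely, the first step is to pass to the generic fiber: by Proposition~\ref{prop: Generic property} there is, for $U\subset S$ small enough, an injection $N^1(X_U/U)_\Rr\hookrightarrow N^1(X_{\bar K})_\Rr$, and by Proposition~\ref{prop: Generic property 0} the pair $(X_{\bar K},\De_{\bar K})=(X_{\bar K},0)$ is klt Calabi--Yau; moreover $X_K$ has terminal singularities since $X$ does and the base change is generically \'etale after shrinking, so $\Bir(X_K/K)=\PsAut(X_K/K)$. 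The weak cone conjecture for $\bMov^e(X_K/K)$ gives a rational polyhedral (weak) fundamental domain $\Pi_K\subset\bMov^e(X_K/K)$ under $\Gamma_{B,K}$. I would then spread $\Pi_K$ out: its finitely many rational generators are movable classes on $X_K$, each represented (after clearing denominators and using $R^1f_*\mathcal O_X$ torsion, so that $N^1(X_{\bar K})_\Qq\simeq\Pic(X_{\bar K})_\Qq$) by an effective movable $\Qq$-divisor on $X_K$; taking closures of these divisors in $X$ and pulling back ample classes from $S$ produces an effective rational polyhedral cone $P_M'\subset\Eff(X/S)$ whose image in $N^1(X_U/U)_\Rr\hookrightarrow N^1(X_{\bar K})_\Rr$ contains $\Pi_K$.

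The second step is to show $\bigcup_{g\in\PsAut(X/S)}g\cdot P_M\supset\Mov(X/S)$ for a suitable enlargement $P_M$ of $P_M'$. Here the key geometric input is that pseudo-automorphisms of $X_K/K$ lift, after possibly shrinking $S$, to birational maps $X\dashrightarrow X/S$ isomorphic in codimension one: a pseudo-automorphism of the generic fiber is defined over a finite extension of $K$, hence over some finite cover $T\to S$; using that $X$ has terminal singularities with $K_X\sim_\Rr0/S$ so that $\Bir(X/S)=\PsAut(X/S)$, and that being isomorphic in codimension one is a codimension-one condition testable on the generic fiber, one descends the map to $X/S$ (cf.\ the argument structure in Proposition~\ref{prop: max vector space of Mov}~(3) and Lemma~\ref{le: trivial kernel}). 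Given $[M]\in\Mov(X/S)$, its restriction $[M_K]\in\Mov(X_K/K)$ lies in $h_K\cdot\Pi_K$ for some $h_K\in\Gamma_{B,K}$; lifting $h_K$ to $g\in\PsAut(X/S)$ and comparing in $N^1$, the difference $g\cdot[M]$ and the corresponding class in $P_M'$ differ only by something in $\Ker\bigl(N^1(X/S)_\Rr\to N^1(X_U/U)_\Rr\bigr)$, which by Proposition~\ref{prop: max vector space of Mov}~(1),(2) is spanned by vertical (very exceptional) divisor classes and is contained in the maximal subspace $W\subset\bMov(X/S)$. So after enlarging $P_M$ to also contain a fundamental domain for $\Gamma_W$ acting on $W\cap\bMov^e$ — this is where Lemma~\ref{le: trivial kernel} enters, giving $\Gamma_W$ finite and hence this enlargement harmless — one gets $\Gamma_B\cdot P_M\supset\Mov(X/S)$, i.e.\ Conjecture~\ref{conj: shokurov polytope}~(1) holds.

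The third step is purely formal: apply Theorem~\ref{thm: main 1}~(1) (using $R^1f_*\mathcal O_X$ torsion and the assumed existence of good minimal models in dimension $\dim(X/S)$) to conclude the weak cone conjecture for $\bMov^e(X/S)$. For the ``moreover'' clause, if $\Mov(X/S)$ is non-degenerate then by Proposition~\ref{prop: max vector space of Mov} its maximal subspace $W$ is $0$, so the weak fundamental domain produced by Proposition~\ref{prop: fun domain of Mov}~(3)/(4) is an honest rational polyhedral fundamental domain; alternatively apply Theorem~\ref{thm: main 1}~(2) directly. The final sentence follows from Proposition~\ref{prop: max vector space of Mov}~(4): $S$ being $\Qq$-factorial forces $\bMov(X/S)$ non-degenerate.

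I expect the main obstacle to be the descent/lifting in the second step: showing that the abstract action of $\Gamma_{B,K}$ on $N^1(X_{\bar K})_\Rr$ is compatible, via the injection $N^1(X_U/U)_\Rr\hookrightarrow N^1(X_{\bar K})_\Rr$, with a genuine $\PsAut(X/S)$-action — in particular that one can choose a \emph{single} open set $U$ over which enough pseudo-automorphisms descend, and that the fundamental-domain property is preserved under spreading out. Controlling the kernel $\Ker(r)$ and the role of $W$ (ensuring the discrepancy between generic and relative classes is exactly the vertical part, and that $\Gamma_W$ is finite so the enlargement of $P_M$ is legitimate) is the technical heart; everything else is an application of the machinery already developed in Sections~\ref{sec: Geometry of convex cones}--\ref{sec: A variant of the cone conjecture}.
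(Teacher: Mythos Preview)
Your overall strategy matches the paper's: lift the fundamental domain $\Pi_K$ to $\Pi\subset\Eff(X/S)$, lift pseudo-automorphisms of the generic fiber to $\PsAut(X/S)$, absorb the discrepancy into the vertical part, and then invoke Theorem~\ref{thm: main 1}. However, you are making the lifting step far harder than it is, and this leads you to import machinery (finite covers, descent, Lemma~\ref{le: trivial kernel}) that the paper's proof does not use at all.

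The point you are missing is that an element $g_K\in\PsAut(X_K/K)$ is \emph{already} a birational self-map of $X$ over $S$: the function fields satisfy $K(X_K)=K(X)$ and the $K$-structure on the left is exactly the $S$-structure on the right, so $g_K$ \emph{is} an element of $\Bir(X/S)$ with no spreading out, no finite extension of $K$, and no cover $T\to S$ required. Since $X$ is terminal with $K_X$ nef$/S$, one has $\Bir(X/S)=\PsAut(X/S)$, and the lift is done. Your invocation of finite covers is the argument appropriate to the \emph{geometric} generic fiber $X_{\bar K}$ (Section~\ref{subsec: geometric cone conj}), not to $X_K$; here it is unnecessary and is the source of your anticipated ``main obstacle''.

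Likewise, you do not need Lemma~\ref{le: trivial kernel} or any fundamental domain for $\Gamma_W$. The paper simply sets $P\coloneqq\Cone(\Pi\cup W)$; since $W$ is a rational linear subspace generated by vertical divisor classes (Proposition~\ref{prop: max vector space of Mov}), this is a rational polyhedral cone contained in $\Eff(X/S)$. Given $[M]\in\Mov(X/S)$, pick $g$ and $[D]\in\Pi$ with $g_K\cdot[D_K]=[M_K]$; then $R^1f_*\Oo_X$ torsion gives $H^1(X_K,\Oo_{X_K})=0$, hence $g_K\cdot D_K\sim_\Rr M_K$, so $g\cdot D-M$ is $\Rr$-linearly equivalent over $S$ to a vertical divisor $-B$. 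Thus $[D+g^{-1}\cdot B]\in\Pi+W\subset P$ and $g\cdot[D+g^{-1}\cdot B]=[M]$. That is the whole argument; the finiteness of $\Gamma_W$ plays no role. Your third step (applying Theorem~\ref{thm: main 1} and Proposition~\ref{prop: max vector space of Mov}~(4)) is correct as written.
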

\begin{proof}
Let $\Pi_K \subset \bMov^e(X_K/K)$ be a polyhedral cone such that $$\PsAut(X_K/K) \cdot \Pi_K= \Mov(X_K/K).$$ Let $\Pi \subset \Eff(X/S)$ be a polyhedral cone which is a lift of $\Pi_K$. In other words, $\Pi$ maps to $\Pi_K$ under the natural surjective map $N^1(X/S)_\Rr \to N^1(X_K/K)_\Rr$ (see Proposition \ref{prop: Generic property} (1)). 

If $g_K \in \PsAut(X_K/K)$, then $g_K$ can be viewed as a birational morphism $g$ of $X$ over $S$. Then $g \in \Bir(X/S)=\PsAut(X/S)$ as $K_X$ is nef$/S$ and $X$ has terminal singularities. Indeed, let $p: W \to X, q: W \to X$ be resolutions such that $g=q \circ p^{-1}$. We have $K_W=p^*K_X+E$ and $K_W=q^*K_X+F$ with $\Supp E =\Exc(p), \Supp F= \Exc(q)$ as $X$ has terminal singularities. As $K_X$ is nef$/S$, we have $E=F$ by the negativity lemma. Thus $g$ induces an isomorphism $X\backslash p(E) \simeq X\backslash q(F)$. This shows $g\in \PsAut(X/S)$.

Let $W \subset \bMov(X/S)$ be the maximal vector space. We claim that for $P \coloneqq \Cone(\Pi \cup W)$, 
\begin{equation}\label{eq: desired inclusion}
P \subset \Eff(X/S)  \text{~and~} \PsAut(X/S) \cdot P \supset \Mov(X/S).
\end{equation}
By Proposition \ref{prop: max vector space of Mov} (2), $W$ is generated by vertical divisors and thus $W \subset \Eff(X/S)$. This shows $P \subset \Eff(X/S)$. Next, for any $[M] \in \Mov(X/S)$ such that $M$ is an $\Rr$-Cartier divisor, there exist an $\Rr$-Cartier divisor $D$ on $X$ and a $g\in \PsAut(X/S)$ such that $[D] \in \Pi$ and $g_K \cdot [D_K]=[M_K]$. As $R^1f_*\Oo_X=0$, we have $g_K \cdot D_K \sim_\Rr M_K$ by Lemma \ref{lem: torsion is zero}. By Proposition \ref{prop: Generic property} (1), there exists a vertical divisor $B$ on $X$ such that $g\cdot D +B \sim_\Rr M/S$. Thus $D+g^{-1}\cdot B \in P$ and $g\cdot [D+g^{-1} \cdot B]=[M]$. This shows $\PsAut(X/S) \cdot P \supset \Mov(X/S)$.

The \eqref{eq: desired inclusion} shows that Conjecture \ref{conj: shokurov polytope} (1) is satisfied. Then Theorem \ref{thm: main 1} (1) and (2) imply the desired claim. Note that by Proposition \ref{prop: max vector space of Mov} (3), if $S$ is $\Qq$-factorial, then $\bMov(X/S)$ is non-degenerate. 
\end{proof}

\begin{remark}
The above argument does not work for a log pair $(X/S, \De)$ because each $g\in \PsAut(X_K/K, \De_K)$ may not lift to $\PsAut(X/S,\De)$.
\end{remark}

Now Theorem \ref{thm: K3} follows from Theorem \ref{thm: generic cone conjecture} and the cone conjecture of K3 surfaces over arbitrary fields with characteristic $\neq 2$ (\cite{BLL20}). 

\begin{proof}[Proof of Theorem \ref{thm: K3}]
We have $R^1f_*\Oo_X \otimes k(t) \simeq H^1(X_t, \Oo_{X_t})=0$, where $t\in S$ is a general closed point. Hence $R^1f_*\Oo_X$ is a torsion sheaf and thus $R^1f_*\Oo_X=0$ by Lemma \ref{lem: torsion is zero} (1).

We claim that $X_{K}$ is a smooth K3 surface. Let $U \subset S$ be a smooth open set such that $X_U \to U$ is flat and for any closed point $t \in U$, $X_t$ is a K3 surface. By 
\cite[\href{https://stacks.math.columbia.edu/tag/01V8}{Lemma 01V8}]{Sta22}, $f_U: X_U \to U$ is a smooth morphism. Thus $X_K/K$ is smooth. Note that
\[
\spec {K(S)} \to U, \quad \spec {k(t)}  \to U
\] are flat morphisms, where $t\in U$ is a closed point. Then \cite[III Prop 9.3]{Har77} implies that for a quasi-coherent sheaf $\Ff$ on $X_U$ and $i \geq 0$,
\begin{equation}\label{eq: base change}
\begin{split}
H^i(X_U, \Ff) &\otimes_U {K} \simeq H^i(X_{K}, \Ff_{X_{K}}),\\
H^i(X_U, \Ff) &\otimes_U {k(t)} \simeq H^i(X_t, \Ff_{X_t}).
\end{split}
\end{equation} First, applying \eqref{eq: base change} to $\omega_{X_U/U}, \omega_{X_U/U}^{-1}$ and $i=0$, we have $\Oo_{X_K}(K_{X_K}) \simeq \Oo_{X_K}$. Next, applying \eqref{eq: base change} to $\Oo_{X_U}$ and $i=1$, we have $H^1(X_K, \Oo_{X_K})=0$. This shows that $X_K/K$ is a K3 surface. 

We claim that $\Amp(X_K/K)_+=\bAmp^e(X_K/K)$. By Lemma \ref{lem: inclusion} (1), it suffices to show that $\Amp(X_K/K)_+ \subset \Eff(X_K/K)$. Let $D_K$ be a Cartier divisor on $X_K$ such that $[D_K] \in \bAmp(X_K/K)\cap N^1(X_K/K)_\Qq$. A similar argument as above shows that $X_{\bar K}$ is a K3 surface over $\bar K$. An application of Riemann-Roch shows that there exists an effective divisor $E_{\bar K}$ such that $D_{\bar K} \sim E_{\bar K}$. As cohomology is invariant under flat base change, we have
\[
h^0(X_K, \Oo_{X_K}(mD_K)) \;=\; h^0(X_{\bar K}, \Oo_{X_{\bar K}}(mD_{\bar K})) \text{~for all~} m\in \Zz.
\] This implies see $[D_K]\in \Eff(X_K/K)$.

By \cite[Corollary 3.15]{BLL20}, there is a rational polyhedral cone $\Pi \subset \Amp(X_K/K)_+$ which is a fundamental domain of $\Amp(X_K/K)_+$ under the action of $\Aut(X_K/K)$.
By $\Amp(X_K/K)_+=\bAmp^e(X_K/K)=\bMov^e(X_K/K)$ and $\Aut(X_K/K)=\PsAut(X_K/K)$, Theorem \ref{thm: generic cone conjecture} implies the desired result. 
\end{proof}

\begin{remark}\label{rmk: hyperkahler}
For a projective hyperk\"ahler manifold $X$ over a characteristic zero field $k$, \cite[Theorem 1.0.5]{Tak21} showed that $\Mov(X/k)_+$ has a rational polyhedral fundamental domain $\Pi$ under the action of $\Bir(X/k)$. However, this is not sufficient to deduce the cone conjecture for movable cones of hyperk\"ahler fibrations. Indeed, we do not know that $\Pi \subset \Eff(X/k)$. On the other hand, when $k=\Cc$, it is conjectured that $\Mov(X/k)_+\subset \Eff(X/k)$ (see \cite[Conjecture 1.4]{BM14}). We refer to \cite{HPX24} for the latest developments on the cone conjecture for irreducible holomorphic symplectic manifolds.
\end{remark}

\subsection{Geometric cone conjecture}\label{subsec: geometric cone conj}
For a klt Calabi-Yau fiber space, we study the relationship between the relative cone conjecture and the cone conjecture of its geometric fiber. We show that the cone conjecture for the movable cone of the geometric fiber implies the relative cone conjecture for the movable cone of a finite Galois base change. 

When $X$ is not a $\Qq$-factorial variety, if $X$ admits a small $\Qq$-factorization $\ti X \to X$, then the (weak) cone conjecture for $\bMov^e(X/S)$ is understood as the (weak) cone conjecture for $\bMov^e(\ti X/S)$. By \cite[Corollary 1.4.3]{BCHM10}, such small $\Qq$-factorization exists if there is a divisor $\De$ such that $(X,\De)$ is klt. Although the cone conjecture for $\bMov^e(X/S)$ still makes sense for non-$\Qq$-factorial varieties, the $\Qq$-factoriality provides more convenience while preserving the geometric consequences of the cone conjecture (e.g. the finiteness of birational contraction models). Note that the movable cones and pseudo-automorphism groups of different small $\Qq$-factorizations are naturally identified. Hence, the validity of the conjecture is independent of the choice of $\ti X$. On the other hand, the ample cones and automorphism groups of non-isomorphic small $\Qq$-factorizations cannot be identified. Therefore, we do not pass to a small $\Qq$-factorization when considering the cone conjecture for $\bAmp^e(X/S)$.

For a klt Calabi-Yau fiber space $f: (X,\De) \to S$. Let $K=K(S)$ and $\bar K$ be the algebraic closure of $K$. For $g\in \Bir(X/S)$, let $g_{\bar K} \in \Bir(X_{\bar K}/\bar K)$ be the extension of $g$ under the base change $\spec \bar K \to S$. Let $X_{\bar K}\coloneqq X \times_S \spec \bar K$ be the geometric fiber of $f$. Set $\De_{\bar K} \coloneqq \De \times_S  \spec \bar K$. By Proposition \ref{prop: Generic property 0} (1), $(X_{\bar K},\De_{\bar K})$ is still klt. Note that even if $X$ is $\Qq$-factorial, $X_{\bar K}$ may not be $\Qq$-factorial. Let $\ti\pi:\widetilde{X_{\bar K}} \to X_{\bar K}$ be a small $\Qq$-factorization. Set $\widetilde{\De_{\bar K}}$ be the strict transform of $\De_{\bar K}$. Let $\bar\Gamma_B$ be the image of $\PsAut(\widetilde{X_{\bar K}}/\bar K, \widetilde{\De_{\bar K}})(=\PsAut(X_{\bar K}/\bar K, \De_{\bar K}))$ under the group homomorphism
\begin{equation}\label{eq: tau}
\iota_{\bar K}: \PsAut(\widetilde{X_{\bar K}}/\bar K, \widetilde{\De_{\bar K}}) \to {\rm GL}(N^1(\widetilde{X_{\bar K}}/\bar K)_\Rr).
\end{equation} 

\begin{proposition}\label{prop: realize actions}
Under the above notation and assumptions. 
\begin{enumerate}
\item If the weak cone conjecture of $\bMov^e(X_{\bar K}/\bar K)$ holds true, then, after shrinking $S$, there is a finite \'etale Galois morphism $T \to S$ such that for any $\bar g \in \PsAut(X_{\bar K}/\bar K, \De_{\bar K})$, there exists a $g\in \PsAut(\widetilde{X_{T}}/T,\widetilde{\De_T})$ such that $g_{\bar K}$ and $\bar g$ induce the same action on $N^1(\widetilde{X_{\bar K}}/\bar K)_\Rr$. Here $\widetilde{X_{T}}$ is  a small $\Qq$-factorization of $X_T$ and $\widetilde{\De_T}$ is the strict transform of $\De_T$.

\item If the weak cone conjecture of $\bAmp^e(X_{\bar K}/\bar K)$ holds true, then, after shrinking $S$, there is a finite \'etale Galois morphism $T \to S$ such that for any $\bar g \in \Aut(X_{\bar K}/\bar K, \De_{\bar K})$, there exists a $g\in \Aut(X_{T}/T, \De_{T})$ such that $g_{\bar K}$ and $\bar g$ induce the same action on $N^1(X_{\bar K}/\bar K)_\Rr$.
\end{enumerate}
\end{proposition}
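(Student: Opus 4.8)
The plan is to treat part (1) in detail; part (2) is entirely analogous and slightly easier, since an automorphism of the projective $\bar K$-scheme $X_{\bar K}$ descends directly to an automorphism over a finite cover, with no small $\Qq$-factorialization needed. The first step is a reduction: it suffices to descend finitely many maps. Suppose $\bar g_1,\dots,\bar g_r\in\PsAut(X_{\bar K}/\bar K,\De_{\bar K})$ are chosen so that $\iota_{\bar K}(\bar g_1),\dots,\iota_{\bar K}(\bar g_r)$ generate $\bar\Gamma_B$, and suppose we have produced a dominant \'etale Galois morphism $T\to S$ (we are free to shrink $S$, as this does not change $X_{\bar K}$ and hence does not affect the conclusion, which concerns $N^1(\widetilde{X_{\bar K}}/\bar K)_\Rr$), a small $\Qq$-factorialization $\widetilde{X_T}\to X_T$, and elements $g_1,\dots,g_r\in\PsAut(\widetilde{X_T}/T,\widetilde{\De_T})$ such that $(g_j)_{\bar K}$ and $\bar g_j$ induce the same element of $\mathrm{GL}\big(N^1(\widetilde{X_{\bar K}}/\bar K)_\Rr\big)$, where we fix once and for all an identification of $\widetilde{X_T}\times_T\bar K$ with $\widetilde{X_{\bar K}}$ in codimension one. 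Then for an arbitrary $\bar g$, I would write $\iota_{\bar K}(\bar g)$ as a word $w$ in the $\iota_{\bar K}(\bar g_j)^{\pm 1}$ and set $g:=w(g_1,\dots,g_r)$, a composition of birational self-maps of $\widetilde{X_T}$. Since $\PsAut(\widetilde{X_T}/T,\widetilde{\De_T})$ is closed under composition and inversion, $g$ lies in it; and $\iota_{\bar K}(g_{\bar K})=w\big(\iota_{\bar K}((g_j)_{\bar K})^{\pm1}\big)=w\big(\iota_{\bar K}(\bar g_j)^{\pm1}\big)=\iota_{\bar K}(\bar g)$ by functoriality of base change and because $\iota_{\bar K}$ is a group homomorphism. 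This gives the assertion.

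Second, I would prove that $\bar\Gamma_B$ is finitely generated; this is the step where the hypothesis that the weak cone conjecture holds for $\bMov^e(X_{\bar K}/\bar K)$ is used essentially (and where, as remarked in the introduction, it matters that one has a fundamental domain rather than merely finitely many equivalence classes). Since $X_{\bar K}$ is projective, $\bEff(X_{\bar K}/\bar K)$ and hence its subcone $\bMov^e(X_{\bar K}/\bar K)$ are non-degenerate, so the maximal vector space $W\subset\bMov(X_{\bar K}/\bar K)$ vanishes; consequently a weak rational polyhedral fundamental domain $\bar\Pi$ (which one may take full-dimensional, since it covers the full-dimensional cone $\bMov^e$ under countably many translates) has finite stabilizer $\{\gamma\in\bar\Gamma_B:\gamma\bar\Pi=\bar\Pi\}$: such $\gamma$ permute the finitely many extreme rays of $\bar\Pi$, and a lattice-preserving element of $\mathrm{GL}(N^1(\widetilde{X_{\bar K}}/\bar K)_\Rr)$ fixing each extreme ray of a full-dimensional cone is the identity. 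By the standard chamber-path argument, $\bar\Gamma_B$ is then generated by this finite stabilizer together with one element crossing each of the finitely many codimension-one faces of $\bar\Pi$; alternatively this is Looijenga's result underlying Theorem \ref{thm: finite presented}. Either way $\bar\Gamma_B$ is finitely generated, and the same reasoning applies to $\bAmp^e(X_{\bar K}/\bar K)$ and $\bar\Gamma_A$ for part (2).

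Third is the actual descent of the finitely many $\bar g_j$. Each $\bar g_j$ is a birational self-map of $X_{\bar K}=X_K\times_K\bar K$ specified by finitely many rational data with coefficients in $\bar K$; since $\bar K=\varinjlim K'$ over finite subextensions $K'/K$, all of these data — together with the finitely many relations witnessing that $\bar g_j$ is an isomorphism in codimension one and that $(\bar g_j)_*\Supp\De_{\bar K}=\Supp\De_{\bar K}$ — are already defined over a common finite extension $K'/K$. Replacing $K'$ by its Galois closure over $K$ and shrinking $S$, the extension $K\hookrightarrow K'$ corresponds to a dominant \'etale Galois morphism $T\to S$, and each $\bar g_j$ descends to $g_j'\in\Bir(X_T/T,\De_T)$ with $(g_j')_{\bar K}=\bar g_j$. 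Shrinking $T$ further, I would discard the finitely many divisors on $X_T$ that are contracted or extracted by some $g_j'$ and are vertical over $T$; a horizontal such divisor would restrict on $X_{\bar K}$ to a divisor contracted or extracted by $\bar g_j$, which is impossible, so after this shrinking each $g_j'$ is an isomorphism in codimension one over $T$. Finally each $g_j'$ lifts to the chosen small $\Qq$-factorialization to give $g_j\in\PsAut(\widetilde{X_T}/T,\widetilde{\De_T})$, and by construction $(g_j)_{\bar K}$ induces on $N^1(\widetilde{X_{\bar K}}/\bar K)_\Rr$ the same action as $\bar g_j$.

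I expect the main obstacle to be the bookkeeping around small $\Qq$-factorializations and base change in the last step. Since $\Qq$-factoriality is not preserved by base change (as noted in the Remark after Proposition \ref{prop: Generic property 0}), one cannot simply base-change a $\Qq$-factorialization $\widetilde{X_T}\to X_T$ and expect to recover $\widetilde{X_{\bar K}}$; instead one must argue that, after shrinking $T$, $\widetilde{X_T}\times_T\bar K$ is a \emph{small} birational model of $X_{\bar K}$ and is therefore isomorphic in codimension one to $\widetilde{X_{\bar K}}$, so that it has a canonically identified space of divisor classes and a pseudo-automorphism of it corresponds to one of $\widetilde{X_{\bar K}}$ with the same matrix — this is the identification $\PsAut(\widetilde{X_{\bar K}}/\bar K,\widetilde{\De_{\bar K}})=\PsAut(X_{\bar K}/\bar K,\De_{\bar K})$ from the set-up, now used relatively over $T$. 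Making this identification compatible across all $r$ generators simultaneously (so that the word $g=w(g_1,\dots,g_r)$ makes sense as a self-map of one fixed $\widetilde{X_T}$) is the delicate point; once it is in place, the three steps combine to yield the statement.
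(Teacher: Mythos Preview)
Your proposal is correct and follows essentially the same three-step architecture as the paper: (i) reduce to finitely many generators, (ii) use the cone conjecture hypothesis to show $\bar\Gamma_B$ is finitely generated, (iii) descend each generator by spreading out and shrinking to kill vertical contracted/extracted divisors. Your Step 1 word-argument is exactly the paper's final paragraph, and your Step 3 matches the paper's shrinking argument.

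Two minor differences worth noting. For finite generation, the paper upgrades the weak cone conjecture to the full cone conjecture via Theorem~\ref{thm: main 1}(2) (using non-degeneracy of $\bMov(\widetilde{X_{\bar K}}/\bar K)$) and then quotes Theorem~\ref{thm: finite presented}; your direct argument via finiteness of the stabilizer and a chamber-path argument is equally valid and avoids that detour. More substantively, the obstacle you flag at the end---the bookkeeping around $\Qq$-factorializations under base change---is precisely where the paper adds one extra ingredient you do not mention: in addition to descending the generators $\bar g_j$, the paper also descends a basis $\bar D_1,\dots,\bar D_\rho$ of $N^1(\widetilde{X_{\bar K}}/\bar K)_\Rr$ to divisors on $X_T$. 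This forces the natural inclusion $N^1(\widetilde{X_T}/T)_\Rr \hookrightarrow N^1(\widetilde{X_{\bar K}}/\bar K)_\Rr$ to be surjective, hence an isomorphism, from which one deduces that $(\widetilde{X_T})_{\bar K}$ is already $\Qq$-factorial and may be taken as $\widetilde{X_{\bar K}}$. This is the concrete mechanism that makes your ``once and for all identification in codimension one'' work uniformly across all generators.
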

\begin{proof}
We only show (1) as (2) can be shown analogously.

By assumption, Conjecture \ref{conj: KM conj} (1) is satisfied for $\Mov(\widetilde{X_{\bar K}}/\bar K)$. As $\widetilde{X_{\bar K}}$ is projective over $\bar K$, if $\pm [D] \in \bEff(\widetilde{X_{\bar K}}/\bar K)$, then we have $D \equiv 0$. This means that  $\bEff(\widetilde{X_{\bar K}}/\bar K)$ is non-degenerate. In particular, $\bMov(\widetilde{X_{\bar K}}/\bar K)$ is also non-degenerate. Lemma \ref{le: existence of fun domain} shows that $\Mov(\widetilde{X_{\bar K}}/\bar K)_+$ admits a rational polyhedral fundamental domain under the action of $\bar \Gamma_B$. By Theorem \ref{thm: finite presented}, $\bar\Gamma_B$ is finitely presented (the following argument only needs it to be finitely generated). Choose 
\[
\bar g_1, \ldots, \bar g_m \in \PsAut(\widetilde{X_{\bar K}}/\bar K, \De_{\bar K})
\] such that $\iota_{\bar K}(\bar g_1), \ldots, \iota_{\bar K}(\bar g_m)$ (see \eqref{eq: tau}) are generators of $\bar\Gamma_B$. As $N^1(\widetilde{X_{\bar K}}/\bar K)_\Rr$ is of finite dimension, let $\widetilde{\bar D_1}, \ldots, \widetilde{\bar D_\rho}$ be divisors such that $[\widetilde{\bar D_1}], \ldots, [\widetilde{\bar D_\rho}]$ is a basis. By Lemma \ref{lem: spread out and specialization}, after shrinking $S$, there is a finite \'etale Galois base change $T \to S$ such that $\bar g_j$ and $\bar D_i \coloneqq \ti \pi_*\widetilde{\bar D_i}$ can be defined on $X_T \to T$. In other words, there exist a $g_j \in \Bir(X_T/T,\De_T)$ and a $D_i$ on $X_T$, such that $(g_j)_{\bar K}=\bar g_j$ and $(D_i)_{\bar K} = \bar D_i$. Shrinking $T$, $(X_T,\De_T)$ has klt singularities by Proposition \ref{prop: Generic property 0} (2). Let $\mu: \widetilde{X_T} \to X_T$ be a small $\Qq$-factorization. Let $\widetilde{D_i} \coloneqq \mu^{-1}_*D_i$. Shrinking $T$ further, there exists a natural inclusion $N^1(\widetilde{X_T}/T)_\Rr \hookrightarrow N^1(\widetilde{(X_T)}_{\bar K}/\bar K)_\Rr$ by Proposition \ref{prop: Generic property}. Because $(\widetilde{(X_T)}_{\bar K},\widetilde{(\De_T)}_{\bar K})$ has klt singularities by Proposition \ref{prop: Generic property 0} (1) and $\widetilde{(X_T)}_{\bar K} \to (X_T)_{\bar K}=X_{\bar K}$ is a small morphism, a small $\Qq$-factorization $Y_{\bar K} \to \widetilde{(X_T)}_{\bar K}$ is still a small $\Qq$-factorization of $X_{\bar K}$. Thus
\[
N^1(\widetilde{X_T}/T)_\Rr \hookrightarrow N^1(\widetilde{(X_T)}_{\bar K}/\bar K)_\Rr \hookrightarrow  N^1(Y_{\bar K}/\bar K)_\Rr \simeq N^1(\widetilde{X_{\bar K}}/\bar K)_\Rr.
\] By the choice of $T$, this is also a surjective map. Hence 
\[
N^1(\widetilde{(X_T)}_{\bar K}/\bar K)_\Rr \simeq  N^1(Y_{\bar K}/\bar K)_\Rr
\] and thus $\widetilde{(X_T)}_{\bar K}$ is $\Qq$-factorial. As $\widetilde{(X_T)}_{\bar K} \to X_{\bar K}$ is a small $\Qq$-factorization, it suffices to show the claim for $N^1(\widetilde{(X_T)}_{\bar K}/\bar K)_\Rr$.

We claim that after shrinking $T$, we have $g_j \in \PsAut(X_T/T,\De_T)$ for each $j$. If $g_j \in \Bir(X_T/T,\De_T)\backslash \PsAut(X_T/T,\De_T)$, then there are finitely many divisors $B_l, l\in J$ which are contracted by $g_j$ or $g^{-1}_j$. As $(g_j)_{\bar K}$ and $(g^{-1}_j)_{\bar K}$ do not contract $(B_l)_{\bar K}$, $B_l$ is vertical over $T$. Therefore, shrinking $T$, we can assume that $g_j$ and $g^{-1}_j$ do not contract divisors for each $j$. This shows the claim.

Finally, let $\overline{\widetilde{D_i}} \coloneqq \widetilde{(D_i)}_{\bar K}$, and for $g, h \in \{g_j \mid 1 \leq j \leq m\}$, let $\bar g \coloneqq g_{\bar K}, \bar h\coloneqq h_{\bar K}$. Then for each $i$,
\[
\bar g_*  (\bar h_*(\overline{\widetilde{D_i}})) =\overline{(g\circ h)}_* (\overline{\widetilde{D_i}}).
\] This implies that 
\[
\iota_{\bar K}(\bar g)(\iota_{\bar K}(\bar h)\cdot [\overline{\widetilde{D_i}}]) = \iota_{\bar K}(\overline{g \cdot h})\cdot [\overline{\widetilde{D_i}}].
\] As $[\overline{\widetilde{D_i}}], i=1, \ldots, \rho$ is a basis of $N^1(\widetilde{(X_T)}_{\bar K}/\bar K)_\Rr$, we have $$\iota_{\bar K}(\bar g)\iota_{\bar K}(\bar h) = \iota_{\bar K}(\overline{g \cdot h}).$$ Now the desired result follows as $\iota_{\bar K}(\bar g_j), 1 \leq j \leq m$ generate $\bar\Gamma_B$.
\end{proof}

\begin{remark}\label{rmk: higher model}
Let $T' \to S$ be a finite \'etale Galois morphism which factors through $T \to S$. By the proof of Proposition \ref{prop: realize actions}, after shrinking $T'$, the claims in Proposition \ref{prop: realize actions} still hold true for $T' \to S$.
\end{remark}

\begin{theorem}
Let $f: (X, \De) \to S$ be a klt Calabi-Yau fiber space. Assume that good minimal models of effective klt pairs exist in dimension $\dim(X/S)$.
\begin{enumerate}
\item Assume that the weak cone conjecture holds true for $\bMov^e(X_{\bar K}/\bar K)$. Then, after shrinking $S$, there is a finite \'etale Galois morphism $T \to S$ such that the cone conjecture holds true for $\bMov^e(X_{T}/T)$. 

\item Assume that the weak cone conjecture holds true for $\bAmp^e(X_{\bar K}/\bar K)$. Then, after shrinking $S$, there is a finite \'etale Galois morphism $T \to S$ such that the cone conjecture holds true for $\bAmp^e(X_{T}/T)$.
\end{enumerate}
\end{theorem}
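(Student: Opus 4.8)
The plan is to transport a (weak) fundamental domain from the geometric fibre to $X_T$ via Proposition \ref{prop: realize actions}, and then to feed the outcome into Theorem \ref{thm: main 1} through Conjecture \ref{conj: shokurov polytope}. I describe (1) in detail; part (2) is entirely parallel, with $\Amp$, $\Aut$, $\Gamma_A$ replacing $\Mov$, $\PsAut$, $\Gamma_B$ (and no $\Qq$-factorization needed). Crucially, one routes through Theorem \ref{thm: main 1}(2)--(3), which asks for non-degeneracy rather than $R^1f_*\Oo_X$ torsion, so the hypothesis that $R^1f_*\Oo_X$ be torsion is avoided.

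First I would invoke Proposition \ref{prop: realize actions}(1): it produces a dominant \'etale Galois morphism $T\to S$ such that every $\bar g\in\PsAut(X_{\bar K}/\bar K,\De_{\bar K})$ is realized by some $g\in\PsAut(\widetilde{X_T}/T,\widetilde{\De_T})$ with $\iota_{\bar K}(g_{\bar K})=\iota_{\bar K}(\bar g)$, and---reading off its proof, after shrinking $T$---such that the restriction map $r\colon N^1(\widetilde{X_T}/T)_\Rr\to N^1(\widetilde{X_{\bar K}}/\bar K)_\Rr$ is a $\Qq$-linear isomorphism, $\widetilde{(X_T)}_{\bar K}$ is $\Qq$-factorial (so we identify $\widetilde{X_{\bar K}}$ with this geometric fibre, a small $\Qq$-factorization of $X_{\bar K}$), and $\Gal(\bar K/K(T))$ acts trivially on $N^1(\widetilde{X_{\bar K}}/\bar K)_\Rr$ (a basis is defined over $K(T)$). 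After shrinking, $\widetilde{X_T}\to T$ is a klt Calabi-Yau fibre space with $\dim(\widetilde{X_T}/T)=\dim(X/S)$ by Proposition \ref{prop: Generic property 0}(2), so Theorem \ref{thm: main 1} applies to it. By the paper's convention the weak cone conjecture for $\bMov^e(X_{\bar K}/\bar K)$ is the one for $\bMov^e(\widetilde{X_{\bar K}}/\bar K)$, hence it furnishes a rational polyhedral cone $\Pi_{\bar K}\subset\bMov^e(\widetilde{X_{\bar K}}/\bar K)\subset\Eff(\widetilde{X_{\bar K}}/\bar K)$ with $\bar\Gamma_B\cdot\Pi_{\bar K}=\bMov^e(\widetilde{X_{\bar K}}/\bar K)$.

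Next I would put $P_M\coloneqq r^{-1}(\Pi_{\bar K})$ and check Conjecture \ref{conj: shokurov polytope}(1) for $\widetilde{X_T}/T$. Enlarging $T$ so that finitely many effective generators of $\Pi_{\bar K}$ are defined over $K(T)$ (keeping $T$ dominant \'etale Galois over $S$ by Remark \ref{rmk: higher model}; alternatively one uses the $\Gal$-trace device from the proof of Theorem \ref{thm: K3} together with the triviality of the $\Gal$-action on $N^1(\widetilde{X_{\bar K}}/\bar K)_\Rr$), these generators spread out to effective divisors on $\widetilde{X_T}$, so $P_M\subset\Eff(\widetilde{X_T}/T)$ is a rational polyhedral cone. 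Since a movable divisor restricts to a movable divisor on the geometric generic fibre, $r$ carries $\Mov(\widetilde{X_T}/T)$ into $\Mov(\widetilde{X_{\bar K}}/\bar K)$, and $r$ is equivariant, as $r(g\cdot[D])=[(g_*D)_{\bar K}]=g_{\bar K}\cdot r([D])$. Then for $[M]\in\Mov(\widetilde{X_T}/T)$ one writes $r([M])\in\bar\Gamma_B\cdot\Pi_{\bar K}$, picks $\bar g\in\PsAut(X_{\bar K}/\bar K,\De_{\bar K})$ with $\iota_{\bar K}(\bar g)\cdot r([M])\in\Pi_{\bar K}$, lifts it to $g\in\PsAut(\widetilde{X_T}/T,\widetilde{\De_T})$, and gets $r(g\cdot[M])=\iota_{\bar K}(\bar g)\cdot r([M])\in\Pi_{\bar K}$, i.e. $g\cdot[M]\in P_M$. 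Hence $\bigcup_{g\in\PsAut(\widetilde{X_T}/T,\widetilde{\De_T})}g\cdot P_M\supset\Mov(\widetilde{X_T}/T)$, which is Conjecture \ref{conj: shokurov polytope}(1).

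Finally, $\bMov(\widetilde{X_T}/T)$ is non-degenerate, being carried injectively by $r$ into $\bMov(\widetilde{X_{\bar K}}/\bar K)\subset\bEff(\widetilde{X_{\bar K}}/\bar K)$, which is non-degenerate since $\widetilde{X_{\bar K}}$ is projective over $\bar K$; so Theorem \ref{thm: main 1}(2) gives the cone conjecture for $\bMov^e(\widetilde{X_T}/T)=\bMov^e(X_T/T)$, proving (1). Part (2) repeats this with ample divisors (which restrict to ample divisors), Proposition \ref{prop: realize actions}(2), and Theorem \ref{thm: main 1}(3), which needs no non-degeneracy hypothesis. I expect the main obstacle to be the passage $P_M\subset\Eff(\widetilde{X_T}/T)$: upgrading the inclusion $\Pi_{\bar K}\subset\Eff(\widetilde{X_{\bar K}}/\bar K)$, valid on the geometric fibre, to an inclusion relative over $T$. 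This is exactly the subtlety flagged in the remarks after Theorem \ref{thm: K3} and Remark \ref{rmk: hyperkahler} (that a fundamental domain need not be effective), and it is what forces the additional Galois base change and the compatibility of small $\Qq$-factorizations with \'etale base change that is already built into Proposition \ref{prop: realize actions}.
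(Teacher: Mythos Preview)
Your proposal is correct and follows essentially the same approach as the paper: lift a polyhedral fundamental domain from the geometric fibre to $\widetilde{X_T}/T$ after an appropriate \'etale Galois base change (using Proposition \ref{prop: realize actions} and Remark \ref{rmk: higher model}), verify Conjecture \ref{conj: shokurov polytope}(1) via the equivariant injection $N^1(\widetilde{X_T}/T)_\Rr\hookrightarrow N^1(\widetilde{X_{\bar K}}/\bar K)_\Rr$, and conclude by non-degeneracy and Theorem \ref{thm: main 1}(2). The only organizational difference is that the paper first performs a preliminary base change to reduce to the case where $X_{\bar K}$ is already $\Qq$-factorial, whereas you extract this from the proof of Proposition \ref{prop: realize actions} directly; both work.
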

\begin{proof}
We only show (1) as (2) can be shown analogously. 

By the proof of Proposition \ref{prop: realize actions}, there exist a finite \'etale Galois morphism $T\to S$ and a small $\Qq$-factorization $\widetilde{X_T} \to X_T$ such that $(\widetilde{X_T}, \widetilde{\De_T}) \to T$ is a klt Calabi-Yau fiber space and $(\widetilde{X_T})_{\bar K} \to X_{\bar K}$ is a small $\Qq$-factorization. Replacing $(X, \De) \to S$ by $(\widetilde{X_T}, \widetilde{\De_T}) \to T$, we can assume that $X_{\bar K}$ is $\Qq$-factorial.

Let $\Pi_{\bar K} \subset \bMov^e(X_{\bar K}/\bar K)$ be a rational polyhedral cone such that 
\begin{equation}\label{eq: fun domain}
\PsAut(X_{\bar K}/\bar K, \De_{\bar K}) \cdot \Pi_{\bar K} = \bMov^e(X_{\bar K}/\bar K).
\end{equation} 

By Lemma \ref{lem: spread out and specialization}, after shrinking $S$, there exist a finite \'etale Galois base change $T \to S$ and finitely many effective divisors $D_j, j\in J$ on $X_{T}$ such that $\Cone([(D_j)_{\bar K}] \mid j \in J)=\Pi_{\bar K}$. We can assume that $T \to S$ satisfies Proposition \ref{prop: realize actions} (1) after replacing $T$ by a higher finite \'etale Galois base change (see Remark \ref{rmk: higher model}). We can further assume that $(X_T, \De_T)$ has klt singularities with $K_{X_T}+\De_T\sim_\Rr 0/T$ by Proposition \ref{prop: Generic property 0} (2). 

Let $\mu: \widetilde{X_T} \to X_T$ be a small $\Qq$-factorization and $\widetilde{D_j} \coloneqq \mu_*^{-1}D_j, j\in J$. Shrinking $T$, by Proposition \ref{prop: Generic property} (2), there is a natural inclusion 
\begin{equation}\label{eq: 6.6}
N^1(\widetilde{X_T}/T)_\Rr \hookrightarrow N^1((\widetilde{X_T})_{\bar K}/\bar K)_\Rr.
\end{equation} Because $(\widetilde{X_T})_{\bar K} \to (X_T)_{\bar K}=X_{\bar K}$ is a small morphism and  $X_{\bar K}$ is $\Qq$-factorial, we have $(\widetilde{X_T})_{\bar K} = X_{\bar K}$. By \eqref{eq: 6.6}, we have the natural inclusion
\begin{equation}\label{eq: inj of Mov}
\Mov(\widetilde{X_T}/T) \hookrightarrow \Mov(X_{\bar K}/\bar K).
\end{equation} 

Let $\Pi \coloneqq \Cone([\widetilde{D_j}] \mid j \in J) \subset \Eff(\widetilde{X_T}/T)$.  We claim that
\[
\PsAut(\widetilde{X_T}/T, \De_T) \cdot \Pi \supset \Mov(\widetilde{X_T}/T).
\] In fact, let $[D] \in \Mov(\widetilde{X_T}/T)$. Then there exist a $\bar g \in \PsAut(X_{\bar K}/\bar K,\De_{\bar K})$ and a $[\bar B] \in \Pi_{\bar K}$ such that $\bar g \cdot [\bar B] =[D_{\bar K}]$. By the construction of $\widetilde{X_T}$ and Proposition \ref{prop: realize actions} (1), there exist a $g\in \PsAut(\widetilde{X_T}/T,\widetilde{\De_T})$ and a $\Theta \in \Pi$ such that $[\Theta_{\bar K}]=[\bar B]$ and 
\[
[(g_* \Theta)_{\bar K}]=g_{\bar K}\cdot [\Theta_{\bar K}]=\bar g \cdot [\bar B] =[D_{\bar K}]. 
\] By \eqref{eq: inj of Mov}, $g\cdot [\Theta]=[g_* \Theta]=[D]\in\Mov(\widetilde{X_T}/T)$.

Therefore, Conjecture \ref{conj: shokurov polytope} (1) is satisfied. As $X_{\bar K}$ is projective over $\bar K$, $\pm [D] \in \bEff(X_{\bar K}/\bar K)$ iff $D \equiv 0$. In particular, $\bMov(X_{\bar K}/\bar K)$ is non-degenerate. Then $\bMov(X_T/T)$ is non-degenerate by \eqref{eq: inj of Mov}. Hence, (1) follows from Theorem \ref{thm: main 1} (2).
\end{proof}

It is desirable to deduce the cone conjecture of the Calabi-Yau fiber space $(X, \De) \to S$ from $(X_T, \De_T) \to T$, where $T \to S$ is a finite \'etale  Galois morphism. This seems to be a difficult problem. The main obstacle is to descend elements from $\PsAut(X_T/T,\De_T)$ and $\Aut(X_T/T,\De_T)$ to $\PsAut(X/S,\De)$ and $\Aut(X/S,\De)$. We propose the following question.

\begin{question}\label{que: finite index}
Let $f: X \to S$ be a terminal Calabi-Yau fiber space. Let $T \to S$ be a finite \'etale  Galois morphism. Possibly shrinking $T$, there is a natural group homomorphism $$\PsAut(X/S) \hookrightarrow \PsAut(X_T/T).$$ Let $\Gamma_S$ and $\Gamma_T$ be the images of $\PsAut(X/S)$ and $\PsAut(X_T/T)$ under the group homomorphism $\PsAut(\widetilde{X_T}/T) \to {\rm GL}(N^1(\widetilde{X_T}/T)_\Rr)$. Is $\Gamma_S$ a finite index subgroup of $\Gamma_T$?
\end{question}

A positive answer to Question \ref{que: finite index} would give that the weak cone conjecture for $\bMov^e(X_{\bar K}/\bar K)$ implies that for $\bMov^e(X/S)$.


\end{document}